\documentclass[a4paper,11pt,reqno]{amsart}

%%%%%%%%%%%%%%%%%%%%%%%%%%%%%%%%%%%%%%%%%%%%%%%%%%%%%%%%%%%%%%%%%%%%%%%%%%%%%%%%
%%%%%%%%%%%%%%%%%%%%%%%%%%%%%%%%%%%%%SETTINGS%%%%%%%%%%%%%%%%%%%%%%%%%%%%%%%%%%%
%%%%%%%%%%%%%%%%%%%%%%%%%%%%%%%%%%%%%%%%%%%%%%%%%%%%%%%%%%%%%%%%%%%%%%%%%%%%%%%%

%%%%%%%%%%%%%%%%
%%% PACKAGES %%%
%%%%%%%%%%%%%%%%

\usepackage{amsmath, amsfonts, amssymb, amsthm, amscd}
\usepackage[foot]{amsaddr} % To put address at top page's foot
\usepackage[a4paper,scale={0.72,0.75},marginratio={1:1},footskip=7mm,headsep=10mm]{geometry}
\usepackage[T1]{fontenc}			
\usepackage[utf8]{inputenc}
\usepackage[italian, english]{babel}
\usepackage{csquotes}
\usepackage[square,numbers]{natbib}	
\usepackage{bbm}
\usepackage{bm}
\usepackage{mathtools}
\usepackage{xspace} % to get the spacing after macros right  
\usepackage{booktabs}
\usepackage{perpage}
\usepackage{enumerate}
\usepackage[pdftitle={Stochastic filtering and optimal control of pure jump Markov processes with noise-free partial observation},%
pdfauthor={Alessandro Calvia},%
pdftex,hidelinks]{hyperref}
\usepackage{verbatim} % Per commentare sezioni
\usepackage{mathrsfs}

%%%%%%%%%%%%%%%%
%%% COMMANDS %%%
%%%%%%%%%%%%%%%%

% Blackboard bolds

\newcommand{\bbE}{{\ensuremath{\mathbb E}} }

\newcommand{\bbP}{{\ensuremath{\mathbb P}} }

\newcommand{\bbX}{{\ensuremath{\mathbb X}} }
\newcommand{\bbY}{{\ensuremath{\mathbb Y}} }

% Calligrafic

\newcommand{\cA}{{\ensuremath{\mathcal A}} }
\newcommand{\cB}{{\ensuremath{\mathcal B}} }
\newcommand{\cC}{{\ensuremath{\mathcal C}} }
\newcommand{\cD}{{\ensuremath{\mathcal D}} }

\newcommand{\cF}{{\ensuremath{\mathcal F}} }
\newcommand{\cG}{{\ensuremath{\mathcal G}} }
\newcommand{\cH}{{\ensuremath{\mathcal H}} }
\newcommand{\cI}{{\ensuremath{\mathcal I}} }

\newcommand{\cK}{{\ensuremath{\mathcal K}} }
\newcommand{\cL}{{\ensuremath{\mathcal L}} }
\newcommand{\cM}{{\ensuremath{\mathcal M}} }
\newcommand{\cN}{{\ensuremath{\mathcal N}} }
\newcommand{\cO}{{\ensuremath{\mathcal O}} }
\newcommand{\cP}{{\ensuremath{\mathcal P}} }

\newcommand{\cU}{{\ensuremath{\mathcal U}} }

\newcommand{\cX}{{\ensuremath{\mathcal X}} }
\newcommand{\cY}{{\ensuremath{\mathcal Y}} }
\newcommand{\cZ}{{\ensuremath{\mathcal Z}} }

% Boldface

\newcommand{\bfa}{{\ensuremath{\mathbf a}} }

\newcommand{\bfu}{{\ensuremath{\mathbf u}} }

% Fraktur

%\newcommand{\frq}{{\ensuremath{\mathfrak q}} } Conflict

\newcommand{\fru}{{\ensuremath{\mathfrak u}} }

% Straight

\newcommand{\dd}{{\ensuremath{\mathrm d}} }

\newcommand{\dB}{{\ensuremath{\mathrm B}} }
\newcommand{\dC}{{\ensuremath{\mathrm C}} }
\newcommand{\dD}{{\ensuremath{\mathrm D}} }

% Number sets symbols
\newcommand{\R}{\mathbb{R}}

\newcommand{\N}{\mathbb{N}}
\newcommand{\Q}{\mathbb{Q}}

% Probability symbols
\renewcommand{\P}{\mathbb{P}}

\newcommand{\p}{\ensuremath{\mathrm P}}
\newcommand{\e}{\ensuremath{\mathrm E}}

\newcommand{\probsp}{(\Omega, \, \mathcal{F}, \, \mathbbm{P})}

% Other mathematical stuff

\newcommand{\ind}{\ensuremath{\mathbf{1}}}
\newcommand{\one}{\ensuremath{\mathsf{1}}}
\newcommand{\pre}{h^{-1}}
\newcommand{\inprod}[2]{\langle #1 , #2 \rangle}
\DeclarePairedDelimiterX{\abs}[1]{\lvert}{\rvert}{#1}
\DeclarePairedDelimiterX{\norm}[1]{\lVert}{\rVert}{#1}
       % \sum-like symbol for union
\DeclareMathOperator*{\inter}{\bigcap}       % \sum-like symbol for inter

 % max with 2 lines
 % min with 2 lines
\newcommand{\suptwo}[2]{\sup_{\substack{#1 \\ #2}}} % sup with 2 lines
 % inf with 2 lines
 % sum with 2 lines
 % sum with 3 lines
 % union with 2 lines
\newcommand{\intertwo}[2]{\inter_{\substack{#1 \\ #2}}} % inter with 2 lines
     % \int with 2 lines
     % \lim with 2 lines
 % liminf 2 lines
 % limsup 2 lines
     % product 2 lines
 % prod. 3 lines

\DeclareMathOperator{\projY}{proj_Y}
\DeclareMathOperator{\supp}{supp}
\DeclareMathOperator*{\esssup}{\mathrm{ess\,sup}}

% Other shortenings
\newcommand{\ie}{i.\,e. }

\newcommand{\eg}{e.\,g. }

%%%%%%%%%%%%%%%%%%%%%
%%% REDEFINITIONS %%%
%%%%%%%%%%%%%%%%%%%%%

% Greek letters
\renewcommand{\epsilon}{\varepsilon}
%\renewcommand{\theta}{\vartheta}
%% Scambio \theta e \vartheta
\let\temp\theta
\let\theta\vartheta
\let\vartheta\temp

%% Scambio \phi e \varphi
\newcommand{\phivarphi}
{
\let\temp\phi
\let\phi\varphi
\let\varphi\temp
}

%%%%%%%%%%%%%%%%%%%%
%%% ENVIRONMENTS %%%
%%%%%%%%%%%%%%%%%%%%

\theoremstyle{plain}
\newtheorem{theorem}{Theorem}[section]
\newtheorem*{theorem*}{Theorem}
\newtheorem{lemma}[theorem]{Lemma}
\newtheorem*{lemma*}{Lemma}
\newtheorem{proposition}[theorem]{Proposition}
\newtheorem*{proposition*}{Proposition}

\theoremstyle{definition}
\newtheorem{definition}{Definition}[section]
\newtheorem{assumption}{Assumption}[section]

\newtheorem{example}{Example}[section]

\theoremstyle{remark}
\newtheorem{rem}{Remark}[section]
\newtheorem*{rem*}{Remark}

% Systems of equations
%
{\left\lbrace\begin{array}{@{}l@{}}}%
{\end{array}\right.}

%%%%%%%%%%%%%%%%%%%%%%
%%% OTHER SETTINGS %%%
%%%%%%%%%%%%%%%%%%%%%%

% Spacing after full stop
\frenchspacing

% Equation numbering 
\numberwithin{equation}{section}

% Footnotes
%\def\thefootnote{\fnsymbol{footnote}} % prints footnotes markers as symbols
%\MakePerPage[2]{footnote} % restarts footnote counter at every new page

%%%%%%%%%%%%%%%%%%%%%%%%%%%%%%%%%%%%%%%%%%%%%%%%%%%%%%%%%%%%%%%%%%%%%%%%%%%%%%%%
%%%%%%%%%%%%%%%%%%%%%%%%%%%%%%%%%%%%%%%%%%%%%%%%%%%%%%%%%%%%%%%%%%%%%%%%%%%%%%%%
%%%%%%%%%%%%%%%%%%%%%%%%%%%%%%%%%%%%%%%%%%%%%%%%%%%%%%%%%%%%%%%%%%%%%%%%%%%%%%%%

\title[Noise-free stochastic filtering and optimal control]
	{Stochastic filtering and optimal control of pure jump Markov processes with \mbox{noise-free} partial observation}
\author[A.~Calvia]{Alessandro~Calvia$^\star$}
\address{$^\star$University of Milano-Bicocca, Department of Statistics and Quantitative Methods, via Bicocca degli Arcimboldi 8, 20126 Milano (Italy).}
\email{alessandro.calvia@unimib.it}
\thanks{This research was partially supported by three GNAMPA-INdAM projects in 2015, 2016 and 2017 and by MIUR-PRIN 2015 project \textit{Deterministic and stochastic evolution equations}.}
\date{}

\begin{document}
	
\begin{abstract}
	We consider an infinite horizon optimal control problem for a pure jump Markov process $X$, taking values in a complete and separable metric space $I$, with noise-free partial observation. The observation process is defined as $Y_t = h(X_t)$, $t \geq 0$, where $h$ is a given map defined on $I$. The observation is noise-free in the sense that the only source of randomness is the process $X$ itself. The aim is to minimize a discounted cost functional. In the first part of the paper we write down an explicit filtering equation and characterize the filtering process as a Piecewise Deterministic Process. In the second part, after transforming the original control problem with partial observation into one with complete observation (the separated problem) using filtering equations, we prove the equivalence of the original and separated problems through an explicit formula linking their respective value functions. The value function of the separated problem is also characterized as the unique fixed point of a suitably defined contraction mapping.
\end{abstract}

\maketitle
	
\noindent \textbf{Keywords:} stochastic filtering, partial observation control problem, pure jump processes, piecewise-deterministic Markov processes, Markov decision processes.

\noindent \textbf{AMS 2010:} 93E11, 93E20, 60J25, 60J75

\section{Introduction} \label{sec:intro}
\phivarphi
This paper is devoted to analyze an optimal control problem on infinite time horizon for a continuous-time pure jump Markov process with partial and \mbox{noise-free} observation. The model studied here is specified by a triple of continuous-time stochastic processes $(X,Y,\bfu)=(X_t, Y_t, u_t)_{t \geq 0}$. The process $X$, called \emph{unobserved process}, is a pure jump Markov process, with values in a complete and separable metric space $I$ and initial law $\mu$. The process $Y$, called \emph{observed process}, takes values in another complete and separable metric space $O$. The process $\bfu$, called \emph{control process}, takes values in the set of Borel probability measures on a compact metric space $U$; we call it \emph{admissible} if it is predictable with respect to the filtration $(\cY_t)_{t \geq 0}$ generated by the process $Y$. This process represents the action of a \emph{relaxed control}, a choice motivated by technical reasons. However, we are able to recover classical $U$-valued processes, \ie \emph{ordinary controls}, by standard approximation theorems.

The aim of our problem is to control the rate transition measure of the unobserved process $X$ (otherwise said, to control its infinitesimal generator) via the control process $\bfu$, so that the functional
\begin{equation} \label{eq:introcontrolpb}
	J(\mu, \bfu) = \e_\mu^\bfu \int_0^{+\infty} \int_U e^{-\beta t} f(X_t, \fru) \, u_t(\dd \fru) \, \dd t,
\end{equation} 
is minimized. Here $f$ is a real-valued bounded function, called \emph{running cost function}, $\beta$ is a positive \emph{discount factor} and the expectation is taken with respect to a specific probability measure $\p_\mu^\bfu$, depending on the initial law of the unobserved process $X$ and on the control $\bfu$. The infimum of the functional $J$ among all admissible processes is the \emph{value function} $V(\mu)$.

As is known, the rate transition measure associated to a pure jump process is a transition kernel that, along with the initial distribution, determines its law. In other words, its sojourn times and its post jump locations are random variables whose law can be expressed in terms of the rate transition measure. It is worth noticing that the present framework contains the optimal control problem with noise-free partial observation of a continuous-time homogeneous finite-state Markov chain. This problem, that corresponds to the case where the state space $I$ of the unobserved process $X$ is a finite set, has been dealt with in \citep{calvia:optcontrol}. In that case, the rate transition measure reduces to a matrix (sometimes called Q-matrix (see \eg \citep{norris:markovchains}) and a more precise characterization of the value function can be obtained, thanks to the peculiar structure of the problem. Such a setting may be more familiar to the reader and we invite she/he to keep in mind this situation also in the present setting. 

Let us now present the program that we intend to develop in the present work. The solution of an optimal control problem with partial observation requires a two step procedure. First, one needs to write a \emph{filtering equation}, characterizing at each time $t \geq 0$ the conditional law of $X_t$ given $\cY_t$. More specifically, its solution $\pi = (\pi_t)_{t \geq 0}$, called the \emph{filtering process}, satisfies for each bounded and measurable function $\phi \colon I \to \R$ the following relation
\begin{equation} \label{eq:introfilteq}
\pi_t(\phi) \coloneqq \int_I \phi(x) \, \pi_t(\dd x) = \e_\mu^\bfu[\phi(X_t) \mid \cY_t], \quad \p_\mu^\bfu\text{-a.s.}, \, t \geq 0.
\end{equation}
The second step uses the filtering process $\pi$ to recast the partial observation problem (\ref{eq:introcontrolpb}) into one with complete observation. The new problem, called \emph{separated problem}, has the filtering process $\pi$ as its state process. We will show that the original and the separated problems are equivalent, in the sense that there exists a precise relationship between the original value function $V$ and the value function $v$ of the separated problem.
Thanks to this equivalence, we can concentrate our attention on the latter problem to characterize $v$ (and indirectly the original value function $V$) as the unique fixed point of a contraction mapping.

The specific feature of our model is the \emph{\mbox{noise-free} observation}. With this expression we mean that the stochastic behavior of the observed process comes exclusively from the unobserved one, since no exogenous noise acts on it.
In particular, we suppose that $Y_t = h(X_t)$ for all $t \geq 0$ and for some measurable function $h \colon I \to O$. The function $h$ generates a partition of the set $I$ by its level sets $h^{-1}(y)$, $y \in O$. Therefore, if at time $t$ an observer records the value $Y_t = y$ for some $y \in O$, then she/he knows that $X_t$ takes some value in the set of states $h^{-1}(y)$. We can equivalently say that the observation is given by the level set where $X_t$ lies at any time $t \geq 0$. 

This situation has been studied also under different assumptions on the signal and observed processes; for instance, the filtering problem alone for an unobserved diffusion process has been dealt with (albeit in part) in \citep{bryson:linearfilt, crisan:nonlinfilt, runggaldier:filt, takeuchi:lsqestimation}, while \citep{joannides:nonlinfilt} is devoted entirely to non-linear filtering with noise-free observation (therein called perfect observation). We also mention the book by Xiong \citep[Ch. 11]{xiong:intrtostochfiltth}, where the referenced chapter is devoted to singular filtering. The case of an unobserved process given by a pure-jump $\R^d$-valued Markov process is studied in \citep{cecigerardi:filtering2, cecigerardi:filtering, cecigerardi:pathdependent, cecigerarditardelli:existence}, where the authors consider counting observations. Finally, we recall that the case of a controlled Markov chain with noise free partial observation has been treated in \citep{calvia:optcontrol}. We also mention that a model similar to the one considered in \citep{calvia:optcontrol}, based on the so called \emph{information structures}, has been dealt with in \citep{winter:phdthesis}. These models, that need \emph{ad hoc} results to be analyzed, have received a sporadic treatment in the literature, despite their potential and useful connection with applications; among the others, we mention queuing systems (see \eg \citep{asmussen:applprob, bremaud:pp}) and inventory models (see \eg \citep{bensoussan:inventory}). We point out that our problem is connected to Hidden Markov Models (see \citep{elliott:hmm} for a comprehensive exposition on this subject). 

A work closely related to the filtering problem presented here, is the paper by F.~Confortola and M.~Fuhrman \citep{confortola:filt}. This work, that shares with the present one the very same definition of noise-free observation, is exclusively concerned with the filtering problem for a finite-state Markov chain. There filtering equations (\ref{eq:introfilteq}) are computed; the filtering process $\pi$ is characterized as a \emph{Piecewise Deterministic Process} (or PDP for short), a class of processes introduced by M.~H.~A.~Davis (see \citep{davis:markovmodels}), and its local characteristics are written down explicitly. Here we generalize the results on the filtering problem therein contained, adopting an equivalent description of the signal and observed processes as Marked Point Processes (see \citep{bremaud:pp}). In this way, thanks to the so called \emph{innovations approach}, we are able to compute the filtering equation and characterize the filtering process as a PDP, writing down its local characteristics explicitly. It is quite unusual to obtain such a detailed description of the filtering process in a general framework as ours. To the best of our knowledge, the filtering process considered here represents the first instance of a PDP taking values in a subset of a Banach space (the set of finite signed measures endowed with the total variation norm, as we will see precisely in Section \ref{sec:filterchar}). Hilbert space-valued PDPs have been considered recently in \citep{buckwar:PDP, renault:PDMP}.

The innovations approach allows us to deduce immediately that a similar filtering equation and an analogous characterization of the filtering process as a PDP are valid also in the controlled case. This permits us to reformulate the original control problem with partial observation for the pure jump process $X$ into a control problem with complete observation for a PDP, called the \emph{separated problem}. More specifically, the separated problem will be a discrete-time one related to a specific Markov decision model formulated in terms of a PDP. The reduction of PDP optimal control problems to discrete-time Markov decision processes is exploited \eg in \citep{almudevar:pdmpcontrol, altay:portoptim, calvia:optcontrol, colaneri:optliquid, costadufour:pdmpavgcontrol, davis:pdpcontrol, davis:markovmodels, forwick:pdp, renault:PDMP}.

We also note an important difference between the approach to PDP optimal control problems presented in \citep{davis:markovmodels} (and employed in other works as \citep{almudevar:pdmpcontrol, bandini:constrainedBSDEs, bandini:pdmpoptcontrol, costadufour:pdmpavgcontrol, davis:pdpcontrol, davisfarid:pdpvisc, dempster:pdmpcontrol, renault:PDMP}) and ours. In the book by Davis the class of control processes is represented by \emph{piecewise open-loop controls}, introduced by D.~Vermes in \citep{vermes:optcontrol}. These are processes depending only on the time elapsed since the last jump and the position at the last jump time of the PDP. In our problem, instead, we are forced to use a more general class of control policies depending on the past jump times and jump positions of the PDP. In fact, as we shall later see, we can find a correspondence between controls for the original problem and policies for the separated one only looking at this larger class. In this sense, an approach closer to ours can be traced in \citep{costadufour:PDPoptcontrol}. There the authors consider an optimal control problem for a PDP (with complete observation), where the control parameter acts only on the jump intensity and on the transition measure of the process but not on its deterministic flow.

The paper is organized as follows: in Section \ref{sec:filtering} we formulate the stochastic filtering problem in the uncontrolled case and prove the filtering equation in Theorem \ref{thm:filteringequation}. In Section \ref{sec:filterchar} we characterize the filtering process as a PDP, writing down explicitly its local characteristics. We chose not to include the control process in these two first Sections to make more clear both the notation and the proofs of the results. However, as anticipated earlier, these results remain valid also in the controlled case. In Section \ref{sec:jmpoptcontrol} we solve our optimal control problem with noise-free partial observation. In Subsection \ref{sec:optcontrolform} we introduce it precisely and recapitulate the main results of Sections \ref{sec:filtering} and \ref{sec:filterchar} in the controlled case. Using the filtering process we can transform the optimal control problem with partial observation into a complete observation one, \ie the separated problem. This latter problem is properly formulated in Subsection \ref{sec:pdpoptcontrol} and we prove its equivalence with the original one, obtaining also the characterization of its value function $v$ as the unique fixed point of a contraction mapping. 

%Cambiare
We mention that the results of this paper have been presented at the \textit{First Italian Meeting on Probability and Mathematical Statistics}, held at the \textit{University of Torino \& Politecnico di Torino}, 19--22/06/2017 and at the workshop \textit{Stochastic Control, BSDEs and new developments} held in Roscoff (France), 11--15/09/2017.

\subsection{Notation}
We collect here for the reader's convenience the main notation and conventions used in this research article.

Throughout the paper the set $\N$ denotes the set of natural integers $\N = \{1, 2, \dots \}$, whereas $\N_0 = \{0, 1, \dots \}$. We use also the symbols $\bar \N = \N \cup \{\infty\}$ and $\bar \N_0 = \N_0 \cup \{\infty\}$. We indicate by $\cN$ the collection of null sets in some specified probability space.

For a fixed metric space $E$, we denote by $\dB_b(E)$ (resp. $\dC(E), \, \dC_b(E)$) the set of real-valued bounded measurable (resp. continuous, bounded continuous) functions on $E$. The symbol $\cB(E)$ indicates the Borel $\sigma$-algebra on $E$ and we denote by $\cM(E), \, \cM_+(E), \, \cP(E)$ the sets of Borel finite signed, finite and probability measures on $E$.

The symbol $\ind_C$ denotes the indicator function of a set $C$, while $\one$ is the constant function equal to $1$. For a fixed metric space $E$ and $\mu \in \cM(E)$, we indicate by $\supp(\mu)$ the support of $\mu$ and by  $\abs{\mu}$ the total variation measure corresponding to $\mu$. If $f \colon E \to \R$ is an integrable function with respect to $\mu$, we denote by $f \mu \in \cM(E)$ the measure with density $f$ with respect to $\mu$ and by $\mu(f) \coloneqq \int_E f(x) \, \dd \mu(x)$. If $f$ is a function of several variables, we define $\mu(f; \, \cdot) \coloneqq \int_E f(x, \cdot) \, \dd \mu(x)$. 

Finally, as far as measurability is concerned, whenever we write the word \emph{measurable} it is understood that we mean \emph{Borel-measurable}, unless otherwise specified. 

\section{The filtering problem} \label{sec:filtering}
\subsection{Formulation}
We briefly recall some basic aspects of stochastic filtering. Let us fix two complete and separable metric spaces $I$ and $O$ equipped with their respective Borel $\sigma$-algebras $\cI$ and $\cO$. The basic datum of a stochastic filtering problem consists in a pair of stochastic processes $(X,Y) = (X_t, Y_t)_{t \geq 0}$, defined on some complete probability space $\probsp$. The process $X$, called \emph{unobserved} or \emph{signal process}, takes values in the set $I$, while the process $Y$, called \emph{observed} or \emph{data process} takes values in the set $O$.
The aim is to find a $\cP(I)$-valued process $\pi = (\pi_t)_{t \geq 0}$ such that for all $t \geq 0$ and all $\phi \in \dB_b(I)$
\begin{equation}\label{eq:filtproc}
\int_I \phi(x) \, \pi_t(\dd x) = \bbE[\phi(X_t) \mid \cY_t], \quad \bbP\text{-a.s.}
\end{equation}
where $(\cY_t)_{t \geq 0}$ is the natural completed filtration of the observed process $Y$. That is, we are looking for a probabilistic estimate of the unobserved state (or of a measurable function of it) given the observation provided by $Y$. For sake of brevity we will often write 
$$\pi_t(\phi) \coloneqq \int_I \phi(x) \, \pi_t(\dd x).$$

We now introduce the setting of the filtering problem discussed in this paper. Let us fix a complete probability space $\probsp$. The $I$-valued unobserved process defined on this space is a continuous-time homogeneous pure jump Markov process. We are given the law $\mu$ of $X_0$. We can equivalently describe it (see \eg \citep{brandt:MPP}) by recording its jump times and jump locations, \ie by defining for all $n \in \N_0$ the random variables $T_n \colon \Omega \to [0, +\infty]$ and $\xi_n \colon \Omega \to I$, where for each $n \in \N$
\begin{align}
T_0(\omega) &\coloneqq 0, & T_n(\omega) &\coloneqq \inf\{t > T_{n-1}(\omega) \colon X_t(\omega) \ne X_{T_{n-1}(\omega)}(\omega)\}, \label{eq:Xjumptimes} \\
\xi_0(\omega) &\coloneqq X_0(\omega), & \xi_n(\omega) &\coloneqq X_{T_n(\omega)}(\omega).
\end{align}
We denote by $\bbX \coloneqq (\cX_t)_{t \geq 0}$ the natural completed filtration associated to the unobserved process, where $\cX_t \coloneqq \sigma(X_s \colon 0 \leq s \leq t) \vee \cN$ for each $t \geq 0$ and $\cN$ is the collection of $\bbP$-null sets in $\cF$. Its dynamics are described by a \emph{rate transition measure} $\lambda$, \ie a transition kernel from $(I, \cI)$ into itself such that for all $n \in \N_0$ and all $t \geq 0$
\begin{equation}\label{eq:ratetransmeas}
\bbP(T_{n+1} - T_n > t, \, \xi_{n+1} \in A \mid \cX_{T_n}) = \frac{\lambda(\xi_n, A)}{\lambda(\xi_n, I)}e^{-\lambda(\xi_n, I) t}, \quad \bbP\text{-a.s.}
\end{equation}
To have a more synthetic notation it is convenient to define the \emph{jump rate function} $\lambda \colon I \to [0,+\infty)$ as $\lambda(x) \coloneqq \lambda(x,I), \, x \in I.$
It will always be clear to which object the notation $\lambda$ will refer to.

The following Assumption will be in force throughout this Section and ensures some important facts about the process $X$ that we will recall later on.
\begin{assumption}\label{hp:lambda}
	The jump rate function $\lambda$ satisfies $\sup_{x \in I} \lambda(x) < +\infty$.
\end{assumption}
\begin{rem}
	It should be noted that the definition of rate transition measure given in (\ref{eq:ratetransmeas}) implies that $\lambda(x, \{x\}) = 0$ for all $x \in I$. This is evident by looking at the definition of the jump times appearing in (\ref{eq:Xjumptimes}).
\end{rem}

We assume that the $O$-valued observed process is a function of the signal process via a given measurable function $h \colon I \to O$, \ie it satisfies
\begin{equation}
Y_t(\omega) = h\bigl(X_t(\omega)\bigr), \quad \omega \in \Omega, \, t \geq 0.
\end{equation}
This means that in our setting the observation is \emph{noise-free}: the only source of randomness is represented by the unobserved process and no exogenous noise is acting on the observation.
We exclude the cases where the function $h$ is one to one or constant. In the first case we would be able to exactly recover the values assumed by the unobserved process, while in the second one the observation would give no information whatsoever about the signal. We assume, without loss of generality, that the function $h$ is surjective.

It is straightforward to notice that also in this case we can equivalently describe the process $Y$ by defining for all $n \in \N_0$ the random variables $\tau_n \colon \Omega \to [0, +\infty]$ and $\eta_n \colon \Omega \to O$, where for each $n \in \N$
\begin{align}
\tau_0(\omega) &\coloneqq 0, & \tau_n(\omega) &\coloneqq \inf\{t > \tau_{n-1}(\omega) \colon Y_t(\omega) \ne Y_{\tau_{n-1}(\omega)}(\omega)\}, \label{eq:Yjumptimes} \\
\eta_0(\omega) &\coloneqq Y_0(\omega), & \eta_n(\omega) &\coloneqq Y_{\tau_n(\omega)}(\omega).
\end{align}
We denote by $\bbY \coloneqq (\cY_t)_{t \geq 0}$ the natural completed filtration associated to the observed process, where $\cY_t \coloneqq \sigma(Y_s \colon 0 \leq s \leq t) \vee \cN$ for each $t \geq 0$. Finally, we define the \emph{explosion points} of the processes $X$ and $Y$ as the random variables
\begin{align}
T_\infty(\omega) &\coloneqq \lim_{n \to \infty} T_n(\omega), & \tau_\infty(\omega) &\coloneqq \lim_{n \to \infty} \tau_n(\omega).
\end{align}

\subsection{The filtering equation}
To tackle the noise-free filtering problem previously described, we will adopt an innovation approach, basing our analysis on the fact that we can represent $(X,Y)$ as a pair of Marked Point Processes (or MPPs for short). These processes are countable collections of pairs of random variables, describing the occurrence of some random events by recording the time and a related mark  (for more details see \eg \citep{jacod:mpp}, \citep{bremaud:pp}, \citep{brandt:MPP}, \citep{jacobsen:pointprocesstheory}). In this Section we will use some results contained in \citep[Ch. 4]{bremaud:pp} regarding filtering with point process observation.

It is immediate to see that the pairs $(T_n, \xi_n)_{n \in \N}$ and $(\tau_n, \eta_n)_{n \in \N}$ are MPPs. Moreover, thanks to Assumption \ref{hp:lambda} they are $\bbP$-a.s. non-explosive, \ie we have that $T_\infty(\omega) = +\infty$ and $\tau_\infty(\omega) = +\infty, \, \bbP$-a.s.\,.
Together with the initial conditions $\xi_0 = X_0$ and $\eta_0 = Y_0$, they describe completely the unobserved process $X$ and the observed process $Y$ respectively, so when speaking of the signal or data process we can equivalently use the jump process formulation or the MPP one. A third useful description of these two processes is possible. Let us define the following \emph{random counting measures} (or RCMs for short)
\begin{align}
n(\omega, \dd t \, \dd z) &\coloneqq \sum_{n \in \N} \delta_{\bigl(T_n(\omega), \, \xi_n(\omega)\bigr)} (\dd t \, \dd z) \ind_{\{T_n(\omega) < +\infty\}}, \quad \omega \in \Omega \\
m(\omega, \dd t \, \dd y) &\coloneqq \sum_{n \in \N} \delta_{\bigl(\tau_n(\omega), \, \eta_n(\omega)\bigr)} (\dd t \, \dd y) \ind_{\{\tau_n(\omega) < +\infty\}}, \quad \omega \in \Omega \label{eq:mMPP}
\end{align}
where, for any arbitrary point $a$ in some measurable space, $\delta_a$ denotes the Dirac probability measure concentrated at $a$.
For each fixed $\omega \in \Omega$, $n$ is a measure on $\bigl((0, +\infty) \times I, \cB\bigl((0, +\infty)\bigr) \otimes \cI\bigr)$ and is associated to the unobserved process $X$, while $m$ is a measure on $\bigl((0, +\infty) \times O, \cB\bigl((0, +\infty)\bigr) \otimes \cO\bigr)$ and is associated to the observed process $Y$. Random counting measures are particularly useful in connection with their \emph{dual predictable projections} (see \eg \citep[Th. 2.1]{jacod:mpp} for more details), also called \emph{compensators}.

It is a known fact that the $\bbX$-compensator of the RCM $n$ is given by the predictable random measure $\lambda(X_{t-}(\omega), \, \dd z)\, \dd t$, since it is associated to a pure jump Markov process with known rate transition measure. We will denote by $\tilde n$ the corresponding \emph{compensated random measure}, \ie
\begin{equation}\label{eq:Xcompensator}
\tilde n(\omega, \dd t \, \dd z) \coloneqq n(\omega, \dd t \, \dd z) - \lambda(X_{t-}(\omega), \, \dd z)\, \dd t.
\end{equation}
It is important to recall that for all $A \in \cI$ the compensated process $\tilde n\bigl((0, t] \times A)$ is a $\bbX$-martingale.

A preliminary step to the solution of our filtering problem consists in computing the $\bbX$- and $\bbY$-compensators of the RCM $m$.
\begin{lemma}\label{lemma:dualpredproj}
	The $\bbX$- and $\bbY$-dual predictable projections of the random counting measure $m$ are respectively given by the predictable random measures $\mu_t(\omega, \dd y) \, \dd t$ and $\hat\mu_t(\omega, \dd y) \, \dd t$,	where for all $B \in \cO$ and $t > 0$
	\begin{align}
	\mu_t(\omega, B) &\coloneqq \lambda\bigl(X_{t-}(\omega), \, h^{-1}(B \setminus \{Y_{t-}(\omega)\})\bigr) \label{eq:predprojX} \\
	\hat\mu_t(\omega, B) &\coloneqq \int_I \lambda\bigl(x, \, h^{-1}(B \setminus \{Y_{t-}(\omega)\})\bigr) \, \pi_{t-}(\omega \, ; \, \dd x) \label{eq:predprojY}
	\end{align}
	and $\pi = (\pi_t)_{t \geq 0}$ is the filtering process defined in (\ref{eq:filtproc}).
\end{lemma}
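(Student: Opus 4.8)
The plan is to recover the marked point process $m$ of $Y$ by thinning and pushing forward that of $X$ through $h$, to transport the known $\bbX$-compensator of $n$ across this operation to obtain the $\bbX$-compensator of $m$, and finally to pass to the $\bbY$-compensator by projecting onto the observation filtration, the step in which the filtering process enters. First I would record the pathwise link between $m$ and $n$. Since $Y_t = h(X_t)$, every jump time of $Y$ is a jump time of $X$, and $T_n$ is a jump time of $Y$ precisely when $h(\xi_n) \ne h(\xi_{n-1}) = h(X_{T_n-}) = Y_{T_n-}$, in which case the associated $Y$-mark is $h(\xi_n)$. Hence, for every nonnegative measurable $g$ on $(0,\infty)\times O$,
\[
\int_{(0,\infty)\times O} g(s,y)\, m(\omega,\dd s\,\dd y) = \int_{(0,\infty)\times I} g\bigl(s,h(z)\bigr)\,\ind_{\{h(z)\ne Y_{s-}(\omega)\}}\, n(\omega,\dd s\,\dd z),
\]
and the integrand $(\omega,s,z)\mapsto \ind_{\{h(z)\ne Y_{s-}(\omega)\}}$ is $\bbX$-predictable because $Y_{s-} = h(X_{s-})$ is left-continuous and $\bbX$-adapted.

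Next I would compute the $\bbX$-compensator. A predictable random measure $\mu_t(\omega,\dd y)\,\dd t$ is the $\bbX$-dual predictable projection of $m$ if and only if $\E\bigl[\int W\,\dd m\bigr] = \E\bigl[\int_0^\infty\!\int_O W(s,y)\,\mu_s(\dd y)\,\dd s\bigr]$ for every nonnegative $\bbX$-predictable $W$ on $\Omega\times(0,\infty)\times O$. Substituting the pathwise identity into the left-hand side and then applying the defining property of the $\bbX$-compensator $\lambda(X_{s-},\dd z)\,\dd s$ of $n$ (see (\ref{eq:Xcompensator})) to the nonnegative $\bbX$-predictable function $(\omega,s,z)\mapsto W(\omega,s,h(z))\,\ind_{\{h(z)\ne Y_{s-}(\omega)\}}$, one obtains
\[
\E\Bigl[\int W\,\dd m\Bigr] = \E\Bigl[\int_0^\infty\!\!\int_I W\bigl(s,h(z)\bigr)\,\ind_{\{h(z)\ne Y_{s-}\}}\,\lambda(X_{s-},\dd z)\,\dd s\Bigr] = \E\Bigl[\int_0^\infty\!\!\int_O W(s,y)\,\mu_s(\dd y)\,\dd s\Bigr],
\]
where, by the image–measure formula, $\mu_s$ is the image under $h$ of $\lambda(X_{s-},\cdot)$ restricted to $I\setminus h^{-1}(\{Y_{s-}\})$, i.e.\ $\mu_s(B) = \lambda\bigl(X_{s-},h^{-1}(B\setminus\{Y_{s-}\})\bigr)$, which is (\ref{eq:predprojX}).

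Finally, for the $\bbY$-compensator I would invoke the reduction theorem for point processes observed through a subfiltration (see \citep[Ch.~4]{bremaud:pp}): since $m$ is $\bbY$-adapted and $\bbY\subseteq\bbX$ (because $Y$ is a function of $X$), the $\bbY$-dual predictable projection of $m$ is the $\bbY$-predictable projection of $\mu_t(\omega,\dd y)\,\dd t$. Since $Y_{t-}$ is $\bbY$-predictable and, by the defining relation (\ref{eq:filtproc}) for $\pi$ together with its left-limit version, a version of the $\cY_{t-}$-conditional law of $X_{t-}$ is $\pi_{t-}(\dd x)$, a disintegration of the kernel $\lambda$ lets one move the conditional expectation inside $\lambda$ while holding $Y_{t-}$ fixed, yielding, for every $B\in\cO$,
\[
\E\bigl[\lambda\bigl(X_{t-},h^{-1}(B\setminus\{Y_{t-}\})\bigr)\mid\cY_{t-}\bigr] = \int_I \lambda\bigl(x,h^{-1}(B\setminus\{Y_{t-}\})\bigr)\,\pi_{t-}(\dd x) = \hat\mu_t(\cdot,B),
\]
which is (\ref{eq:predprojY}).

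I expect this last step to be the main obstacle: one must carefully justify that reducing from $\bbX$ to $\bbY$ amounts to a $\bbY$-predictable projection of the intensity kernel, and that this projection acts only on the $X_{t-}$-dependence inside $\lambda\bigl(X_{t-},h^{-1}(\cdot\setminus\{Y_{t-}\})\bigr)$, producing integration against $\pi_{t-}$; this rests on the left-limit form of the filtering relation and on a measurable disintegration of $\lambda$. The earlier steps are essentially bookkeeping with the marked point processes and the known compensator of $n$, together with a routine monotone-class extension from sets $B$ to general predictable integrands $W(s,y)$.
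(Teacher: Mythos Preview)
Your proposal is correct and follows essentially the same route as the paper: the pathwise identity expressing $m$ as a thinned pushforward of $n$, then transporting the known $\bbX$-compensator of $n$, then passing to the $\bbY$-compensator by conditioning on the observation filtration. The only difference is in the last step: rather than invoking an abstract reduction theorem and then worrying about justifying the predictable projection, the paper verifies the defining equality of the $\bbY$-compensator directly, using that any $\bbY$-predictable field is also $\bbX$-predictable (so the first part applies), and then showing $\bbE\int C_s(y)\,\mu_s(\dd y)\,\dd s = \bbE\int C_s(y)\,\hat\mu_s(\dd y)\,\dd s$ via Fubini--Tonelli and the freezing lemma---which is precisely the ``careful justification'' you anticipated needing.
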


\begin{proof}
	Let $B \in \cO$ and $t > 0$ be fixed. Then, we can write:
	\begin{align*}
	&{} m\bigl(\omega, (0, t] \times B\bigr) 
	= \sum_{0 < s \leq t} \ind_{\{Y_s \ne Y_{s-}, \, Y_s \in B\}}(\omega) \\
	&= \sum_{0 < s \leq t} \ind_{\{X_s \in h^{-1}(Y_{s-})^c, \, X_s \in h^{-1}(B)\}}(\omega) \\
	&= \int_{(0,t] \times I} \ind_{h^{-1}(B \setminus \{Y_{s-}(\omega)\})}(z) \, n(\omega, \dd s \, \dd z). 
	\end{align*}
	
	The random field $\bigl(\ind_{h^{-1}(B \setminus \{Y_{s-}(\omega)\})}(z)\bigr)_{s \in (0,t], \, z \in I}$ is bounded and $\bbX$-predictable and under Assumption \ref{hp:lambda} we have that
	\begin{equation*}
	\bbE \!\!\!\int\limits_{(0,t] \times I} \ind_{h^{-1}(B \setminus \{Y_{s-}\})}(z) \, \lambda(X_{s-}, \dd z) \, \dd s = \bbE \int\limits_{(0,t]} \lambda\bigl(X_{s-}, h^{-1}(B \setminus \{Y_{s-}\})\bigr) \, \dd s < \infty.
	\end{equation*}
	Therefore, by \citep[Prop. 5.3]{jacod:mpp}
	\begin{equation*}
	\int\limits_{(0,t] \times I} \ind_{h^{-1}(B \setminus \{Y_{s-}\})}(z) \, \tilde n(\dd s \, \dd z) = m\bigl((0, t] \times B\bigr) - \int\limits_{(0,t]} \lambda(X_{s-}, \, h^{-1}(B \setminus \{Y_{s-}\}))\, \dd s
	\end{equation*}
	is a $\bbX$-local martingale. The first part of the claim follows from \citep[(2.6)]{jacod:mpp} since
	\begin{equation*}
	\biggl(\int_{(0,t] \times B} \mu_s(\dd z) \, \dd s\biggr)_{t \geq 0} = \biggl( \int_{(0,t]} \lambda(X_{s-}, \, h^{-1}(B \setminus \{Y_{s-}\}))\, \dd s \biggr)_{t \geq 0}
	\end{equation*}
	is a $\bbX$-predictable process for all $B \in \cO$.
	
	What we have just shown is that for all $\bbX$-predictable and non-negative random fields $C = (C_t(z))_{t \geq 0, \, z \in I}$ it holds
	\begin{equation*}
	\bbE \int_{(0, +\infty) \times I} C_s(z) m(\dd s \, \dd z) = \bbE \int_{(0, +\infty) \times I} C_s(z) \mu_s(\dd z) \, \dd s.
	\end{equation*}
	To prove the second part of the claim we have to show that the same kind equality holds for all $\bbY$-predictable and non-negative random fields $C$, with $\hat \mu$ replacing $\mu$ on the right hand side. If $C$ is such a random field we have that
	\begin{align*}
	&{ } \bbE \int_{(0, +\infty) \times I} C_s(z) \mu_s(\dd z) \, \dd s = \bbE \int_{(0, +\infty)} \int_I C_s(z) \ind_{h^{-1}(Y_{s-})^c}(z) \lambda(X_{s-}, \, \dd z) \, \dd s \\
	&=\bbE \int_{(0, +\infty)} \int_I \int_I C_s(z) \ind_{h^{-1}(Y_{s-})^c}(z) \lambda(x, \, \dd z) \, \pi_{s-}(\dd x) \, \dd s = \bbE \int_{(0, +\infty) \times I} C_s(z) \hat \mu_s(\dd z) \, \dd s
	\end{align*}
	This is justified by repeatedly using the Fubini-Tonelli theorem and the freezing lemma.
	
	Now, thanks to \citep[(2.4)]{jacod:mpp} it suffices to take $C_s(z) = C_s \ind_B(z)$, for any fixed $B \in \cO$, with $(C_t)_{t \geq 0}$ a $\bbY$-predictable process. Then we get that
	\begin{equation*}
	\bbE \int_{(0, +\infty)} C_s m(\dd s \times B) = \bbE \int_{(0, +\infty)} C_s \hat \mu_s(B) \, \dd s
	\end{equation*}
	holds for all $\bbY$-predictable processes $(C_t)_{t \geq 0}$. Since $\displaystyle \biggl(\int_{(0, t]} C_s \hat \mu_s(B) \, \dd s\biggr)_{t \geq 0}$ is $\bbY$-predictable for all $B \in \cO$, we get that this process is the $\bbY$-dual predictable projection of $m\bigl((0,t] \times B\bigr)$, for all $B \in \cO$, and the second part of the claim follows immediately.
\end{proof}

%\begin{rem}
%	To be more precise we should have defined the $\bbX$- and $\bbY$-compensators of the RCM $m$ as
%	\begin{align*}
%	\mu_t(\omega, B) \, \dd t &= \ind_{\bigl(0, \tau_\infty(\omega)\bigr)}(t) \lambda\bigl(X_{t-}(\omega), \, h^{-1}(B \setminus \{Y_{t-}(\omega)\})\bigr) \, \dd t\\
%	\hat\mu_t(\omega, B) \, \dd t &= \ind_{\bigl(0, \tau_\infty(\omega)\bigr)}(t) \int_I \lambda\bigl(x, \, h^{-1}(B \setminus \{Y_{t-}(\omega)\})\bigr) \, \pi_{t-}(\omega \, ; \, \dd x) \, \dd t
%	\end{align*}
%	so that the two random measures would have satisfied (\ref{eq:dualpredprojversion}). However, these measures coincide $\bbP$-a.s. with the corresponding ones defined in (\ref{eq:predprojX})-(\ref{eq:predprojY}), since the MPP $(\tau_n, \eta_n)_{n \in \N}$ is $\bbP$-a.s. non-explosive, thanks to Assumption \ref{hp:lambda}. For this reason we will adopt the simpler notation of (\ref{eq:predprojX})-(\ref{eq:predprojY}).
%\end{rem}

Another important step to solve the filtering problem is to represent the process to be filtered (in this case $\phi(X_t)$, for some $\phi \in \dB_b(I)$) as a semimartingale and then use a martingale representation theorem to obtain an expression for the filtering process $\pi(\phi)$.

Let us fix $\phi \in \dB_b(I)$. A semimartingale representation for $\phi(X_t)$ is easily obtained by using Dynkin's formula
\begin{equation} \label{eq:semimartrepr}
\phi(X_t) = \phi(X_0) + \int_0^t \cL\phi(X_s) \, \dd s + M_t, \quad t \geq 0
\end{equation}
where $\cL$ is the infinitesimal generator associated to the process $X$, \ie
\begin{equation}\label{eq:infinitesimalgen}
\cL f(x) \coloneqq \int_I \bigl[f(z) - f(x)\bigr]\lambda(x, \, \dd z), \quad f \in \dB_b(I)
\end{equation}
and $(M_t)_{t \geq 0}$ is a $\bbX$-martingale whose expression is given by
\begin{equation} \label{eq:Dynkinmartingale}
M_t = \int_{(0, t] \times I} \bigl[\phi(z) - \phi(X_{s-})\bigr]\tilde n(\dd s \, \dd z), \quad t \geq 0.
\end{equation}

The following Lemma is required to check that the assumptions needed to get the expression for the filtering process provided in the next Proposition are satisfied. Its proof is omitted, since it is based on routine computations.
\begin{lemma}\label{lemma:filteringhp}
	Let $\phi \in \dB_b(I)$ be fixed. The next properties hold true:
	\begin{enumerate}
		\item The process $\bigl(\cL\phi(X_t)\bigr)_{t \geq 0}$ is $\bbX$-progressive and for all $t > 0$
		$$\bbE \int_0^t \Bigl| \cL\phi(X_s) \Bigr| \, \dd s < \infty.$$
		\item The $\bbX$-martingale $(M_t)_{t \geq 0}$ is such that for all $t > 0$ 
		$$\bbE \int_{(0,t]} \Bigl| \dd M_s \Bigr| < \infty.$$
		\item The process $\bigl(\phi(X_t)\bigr)_{t \geq 0}$ is bounded.
	\end{enumerate}
\end{lemma}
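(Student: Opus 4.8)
The plan is to treat the three items separately, each reducing to an elementary bound that combines the uniform estimate $\bar\lambda := \sup_{x \in I} \lambda(x) < \infty$ of Assumption~\ref{hp:lambda} with the boundedness of $\phi$; throughout write $\|\phi\|_\infty := \sup_{x \in I} |\phi(x)|$. Item~(3) is the natural starting point and requires nothing beyond $\phi \in \dB_b(I)$: one has $|\phi(X_t(\omega))| \leq \|\phi\|_\infty$ for every $\omega \in \Omega$ and every $t \geq 0$, so $(\phi(X_t))_{t \geq 0}$ is bounded.

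For item~(1) I would first verify that $\cL\phi$ is a well-defined bounded Borel function on $I$: from (\ref{eq:infinitesimalgen}) and $|\phi(z) - \phi(x)| \leq 2\|\phi\|_\infty$ one gets $|\cL\phi(x)| \leq 2\|\phi\|_\infty \, \lambda(x, I) = 2\|\phi\|_\infty \, \lambda(x) \leq 2\|\phi\|_\infty \, \bar\lambda$ for all $x \in I$, while measurability of $\cL\phi$ follows since $\lambda$ is a transition kernel and $\phi$ is Borel. Progressive measurability of $(\cL\phi(X_t))_{t \geq 0}$ is then immediate: $X$, being a pure jump (hence c\`adl\`ag) $\bbX$-adapted process, is $\bbX$-progressive, and composition with the Borel map $\cL\phi$ preserves this. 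The integrability statement is the trivial bound $\bbE \int_0^t |\cL\phi(X_s)| \, \dd s \leq 2\|\phi\|_\infty \, \bar\lambda \, t < \infty$ for every $t > 0$.

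For item~(2) I would use (\ref{eq:Xcompensator}) to split $M$ as the difference of its jump part $\int_{(0,t] \times I} [\phi(z) - \phi(X_{s-})] \, n(\dd s \, \dd z)$ and its absolutely continuous part $\int_{(0,t] \times I} [\phi(z) - \phi(X_{s-})] \, \lambda(X_{s-}, \dd z) \, \dd s$, both of finite variation on compact intervals (the first because the MPP $(T_n, \xi_n)_{n \in \N}$ is $\bbP$-a.s. non-explosive, so $(0,t]$ carries only finitely many jumps; the second by absolute continuity). Bounding the total variation measure $|\dd M_s|$ by the sum of the variations of the two pieces, taking expectations, and using that the non-negative $\bbX$-predictable integrand $|\phi(z) - \phi(X_{s-})|$ has the same integral against $n$ and against its $\bbX$-compensator $\lambda(X_{s-}, \dd z)\, \dd s$ (this is the same invocation of \citep[Prop.~5.3 and (2.6)]{jacod:mpp} already made in the proof of Lemma~\ref{lemma:dualpredproj}), one arrives at $\bbE \int_{(0,t]} |\dd M_s| \leq 4\|\phi\|_\infty \, \bar\lambda \, t < \infty$ for every $t > 0$.

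The only step that is not purely mechanical is this last interchange of expectation and integral, together with the Fubini-Tonelli argument needed to pass from integrals over $(0,t] \times I$ to iterated integrals; it is justified exactly as in the proof of Lemma~\ref{lemma:dualpredproj}, so I do not anticipate any genuine obstacle — which is presumably why the authors dispatch the lemma as ``routine computations''.
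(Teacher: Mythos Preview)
Your proposal is correct and is exactly the kind of routine argument the paper has in mind: the authors omit the proof entirely, stating only that it ``is based on routine computations,'' and your three items unpack precisely those computations (boundedness of $\cL\phi$ via Assumption~\ref{hp:lambda}, progressive measurability from the c\`adl\`ag property of $X$, and the compensator identity for the variation of $M$). There is nothing to compare against beyond this.
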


\begin{proposition}
	Let $\phi \in \dB_b(I)$ be fixed. Then, for all $t \geq 0$, $\pi_t(\phi) = \bbE[\phi(X_t) \mid \cY_t]$ satisfies $\bbP$-a.s. the following \emph{Kushner-Stratonovich equation}:
	\begin{equation} \label{eq:filtering1}
	\pi_t(\phi) = \pi_0(\phi) + \int_0^t \int_I \cL\phi(x) \, \pi_{s-}(\dd x) \, \dd s + \int_{(0,t] \times O} \biggl\{\Psi_s(y) - \pi_{s-}(\phi)\biggr\} \, \tilde m(\dd s \, \dd y).
	\end{equation}
	Here $\tilde m(\dd s \, \dd y) = m(\dd s \, \dd y) - \hat \mu_s(\dd y) \, \dd s$, $\bigl(\Psi_t(y)\bigr)_{t \geq 0, \, y \in O}$ is the $\bbY$-predictable random field defined as
	\begin{equation}\label{eq:Psifield}
		\Psi_t(\omega, y) \coloneqq \frac{\psi_t(\omega; \, \dd y) \, \dd t \, \bbP(\dd \omega)}{\hat \mu_t(\omega; \, \dd y) \, \dd t \, \bbP(\dd \omega)}
	\end{equation}
	and $\psi_t(\omega, \, \dd y) \, \dd t$ is the $\bbY$-predictable random measure given by
	\begin{equation*}
	\psi_t(\omega, B) \, \dd t = \int_I \int_I \phi(z) \ind_{h^{-1}(B \setminus \{Y_{t-}(\omega)\})}(z) \lambda(x, \, \dd z) \pi_{t-}(\omega; \, \dd x) \, \dd t, \quad B \in \cO.
	\end{equation*}
\end{proposition}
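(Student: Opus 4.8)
The proof follows the classical innovations approach, along the lines of \citep[Ch.~4]{bremaud:pp}, in three steps: project the $\bbX$-semimartingale \eqref{eq:semimartrepr} onto the smaller filtration $\bbY$; represent the resulting $\bbY$-martingale as a stochastic integral against the compensated random measure $\tilde m$; and identify the integrand.
\textbf{Step 1 (projection).} Since $\cY_t \subset \cX_t$ for every $t$, one invokes the standard theorem on the reduction of a special semimartingale to a sub-filtration. Lemma \ref{lemma:filteringhp} supplies exactly the hypotheses needed: $\phi(X_t)$ is bounded, $\cL\phi(X_t)$ is $\bbX$-progressive with $\bbE\int_0^t|\cL\phi(X_s)|\,\dd s<\infty$ for all $t$, and $M$ is a genuine $\bbX$-martingale of integrable variation on compacts. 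Hence the right-continuous version of $\pi_t(\phi)=\bbE[\phi(X_t)\mid\cY_t]$ is a $\bbY$-special semimartingale whose predictable finite-variation part has density the $\bbY$-predictable projection of $\cL\phi(X_t)$; since $\pi_{t-}$ is a version of the conditional law of $X_{t-}$ given $\cY_{t-}$ and $\cL\phi(X_{t-})=\cL\phi(X_t)$ off a countable set, a version of that projection is $\int_I\cL\phi(x)\,\pi_{t-}(\dd x)$, so that
\begin{equation*}
\pi_t(\phi)=\pi_0(\phi)+\int_0^t\int_I\cL\phi(x)\,\pi_{s-}(\dd x)\,\dd s+\bar M_t,\qquad t\geq 0,
\end{equation*}
for some $\bbY$-martingale $\bar M$ with $\bar M_0=0$.

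\textbf{Step 2 (martingale representation).} The filtration $\bbY$ is the completed natural filtration of the marked point process $(\tau_n,\eta_n)_n$, whose $\bbY$-compensator is $\hat\mu_t(\dd y)\,\dd t$ by Lemma \ref{lemma:dualpredproj}. By the martingale representation theorem for marked point processes (\citep[Ch.~III]{bremaud:pp}; see also \citep{jacod:mpp}), every $\bbY$-martingale is a stochastic integral with respect to $\tilde m(\dd s\,\dd y)=m(\dd s\,\dd y)-\hat\mu_s(\dd y)\,\dd s$; hence there is a $\bbY$-predictable random field $W=(W_s(y))$, locally square-integrable (all jumps and compensators being bounded, by Assumption \ref{hp:lambda}), with $\bar M_t=\int_{(0,t]\times O}W_s(y)\,\tilde m(\dd s\,\dd y)$. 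Together with Step 1 this shows in passing that $\pi_\cdot(\phi)$ may jump only at the jump times of $m$.

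\textbf{Step 3 (identification of $W$).} First note that $\Psi$ in \eqref{eq:Psifield} is well defined with $|\Psi_t(y)|\leq\norm{\phi}_\infty$, because $|\psi_s(B)|\leq\norm{\phi}_\infty\,\hat\mu_s(B)$ directly from the definitions, and that $\psi_s(B)=\int_B\Psi_s(y)\,\hat\mu_s(\dd y)$. Now fix $B\in\cO$ and a bounded $\bbY$-predictable process $(C_t)$, and set $L_t\coloneqq\int_{(0,t]\times B}C_s\,\tilde m(\dd s\,\dd y)$, a $\bbY$-martingale. Since $\hat\mu_s(\dd y)\,\dd s$ has no atoms in time, $\bbE[\langle\bar M,L\rangle_t]=\bbE\int_0^t C_s\int_B W_s(y)\,\hat\mu_s(\dd y)\,\dd s$. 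On the other hand the drift of $\pi(\phi)$ is continuous, so $\langle\bar M,L\rangle=\langle\pi(\phi),L\rangle$, and as $\pi(\phi)$ and $L$ have no common continuous part $\bbE[\langle\pi(\phi),L\rangle_t]=\bbE\bigl[\sum_{s\leq t}\Delta\pi_s(\phi)\,\Delta L_s\bigr]=\bbE\int_{(0,t]\times B}\bigl(\pi_s(\phi)-\pi_{s-}(\phi)\bigr)C_s\,m(\dd s\,\dd y)$. Conditioning at the $\bbY$-stopping times $\tau_n$ --- using that $\pi_{\tau_n}(\phi)=\bbE[\phi(X_{\tau_n})\mid\cY_{\tau_n}]$ while $C_{\tau_n}$ is $\cY_{\tau_n-}$- and $\ind_B(\eta_n)$ is $\cY_{\tau_n}$-measurable --- replaces $\pi_s(\phi)$ by $\phi(X_s)$ under $m(\dd s\,\dd y)$; rewriting the integral against $m$ as one against $n$ and running the same Fubini--Tonelli and freezing-lemma computation as in the proof of Lemma \ref{lemma:dualpredproj} gives $\bbE\int_{(0,t]\times B}\phi(X_s)C_s\,m(\dd s\,\dd y)=\bbE\int_0^t C_s\,\psi_s(B)\,\dd s$, while $\bbE\int_{(0,t]\times B}\pi_{s-}(\phi)C_s\,m(\dd s\,\dd y)=\bbE\int_0^t C_s\,\pi_{s-}(\phi)\,\hat\mu_s(B)\,\dd s$ by Lemma \ref{lemma:dualpredproj} directly. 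Combining, and using $\psi_s(B)=\int_B\Psi_s(y)\,\hat\mu_s(\dd y)$,
\begin{equation*}
\bbE\int_0^t C_s\int_B W_s(y)\,\hat\mu_s(\dd y)\,\dd s=\bbE\int_0^t C_s\int_B\bigl(\Psi_s(y)-\pi_{s-}(\phi)\bigr)\,\hat\mu_s(\dd y)\,\dd s.
\end{equation*}
Since $B\in\cO$ and the bounded $\bbY$-predictable $(C_t)$ are arbitrary, a monotone class argument yields $W_s(y)=\Psi_s(y)-\pi_{s-}(\phi)$ up to a $\hat\mu_s(\dd y)\,\dd s\,\bbP(\dd\omega)$-null set, which is \eqref{eq:filtering1}.

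The main obstacle is Step 3: one has to keep the two compensators of $m$ apart --- the $\bbX$-one $\mu$ of \eqref{eq:predprojX} and the $\bbY$-one $\hat\mu$ of \eqref{eq:predprojY} --- handle the covariation term $[\pi(\phi),L]$ so that only the jumps of $\pi(\phi)$ occurring at jump times of $m$ contribute, and invoke the freezing lemma, exactly as in Lemma \ref{lemma:dualpredproj}, to pass from $\bbX$-predictable integrands built on $\lambda(X_{s-},\cdot)$ to their $\bbY$-versions integrated against $\pi_{s-}$. Checking the existence and boundedness of the Radon--Nikodym derivative $\Psi$ is an easy but necessary preliminary, and some care with the measure-theoretic identification of $\pi_{t-}$ with the conditional law given $\cY_{t-}$ is also needed --- just as it is already implicitly used in the proof of Lemma \ref{lemma:dualpredproj}.
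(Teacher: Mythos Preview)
Your proof is correct and follows the same innovations approach as the paper, but the execution differs in an instructive way. The paper applies \citep[Ch.~VIII, Th.~T9]{bremaud:pp} as a black box: that theorem already packages your Steps~1 and~2 together and outputs the integrand in the form $\Psi^1-\Psi^2+\Psi^3$, with each $\Psi^i$ characterised by a duality identity (equations \eqref{eq:Psi1definition}--\eqref{eq:Psi3definition} in the paper). The remaining work is then purely algebraic: the paper observes that $\Psi^2=\pi_{-}(\phi)$, rewrites the $\Psi^3$ identity via the $\bbX$-compensator of $n$, notices a cancellation with the $\Psi^1$ identity, and lands on the same freezing-lemma computation you carry out to reach $\psi$. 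You instead reconstruct what T9 does --- projecting the semimartingale, invoking martingale representation, and then identifying the integrand through a covariation/test-martingale argument --- which is exactly how T9 is proved in \citep{bremaud:pp}. The advantage of your route is that it is more self-contained and makes transparent why the jump part of $\pi(\phi)$ carries all the information; the advantage of the paper's route is brevity, since the $\Psi^i$ bookkeeping is outsourced to the cited theorem. Either way the crux is the same passage from $\phi(X_s)$ integrated against $m$ to $\psi_s$ via the $\bbX$-compensator and the freezing lemma, and you handle that correctly.
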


\begin{proof}
	Thanks to Lemma \ref{lemma:filteringhp} we can apply \citep[Ch. VIII, Th. T9]{bremaud:pp} and we can write
	\begin{equation*}
	\pi_t(\phi) = \pi_0(\phi) + \int_0^t \hat f_s \, \dd s + \hat m_t, \quad t \geq 0, \quad \bbP\text{-a.s.},
	\end{equation*}
	where $(\hat f_t)_{t \geq 0}$ is a $\bbY$-progressive version of $\bigl(\bbE[\cL\phi(X_t) \mid \cY_t]\bigr)_{t \geq 0}$ and $(\hat m_t)_{t \geq 0}$ is the $\bbY$-martingale given by
	\begin{equation*}
	\hat m_t = \int_{(0,t] \times O} \{\Psi^1_s(y) - \Psi^2_s(y) + \Psi^3_s(y)\} \, \tilde m(\dd s \, \dd y), \quad t \geq 0,
	\end{equation*}
	with the $\bbY$-predictable fields $\bigl(\Psi^i_t(y)\bigr)_{t \geq 0, \, y \in O}$, $i = 1,2,3$ defined by the following equalities:
	\begin{align}
	\bbE \int_0^t \int_O C_s(y) \,\Psi^1_s(y) \,  \hat\mu_s(\dd y) \, ds &=
	\bbE \int_0^t \int_O C_s(y) \, \phi(X_s) \, \mu_s(\dd y) \, \dd s, \label{eq:Psi1definition} \\
	\bbE \int_0^t \int_O C_s(y) \,\Psi^2_s(y) \,  \hat\mu_s(\dd y) \, \dd s &=
	\bbE \int_0^t \int_O C_s(y) \,\phi(X_s) \,  \hat \mu_s(\dd y) \, \dd s, \label{eq:Psi2definition} \\
	\bbE \int_{(0,t] \times O} C_s(y) \, \Psi^3_s(y) \, \hat\mu_s(\dd y) \, \dd s &=
	\bbE \int_{(0,t] \times O} C_s(y) \, [\phi(X_s) - \phi(X_{s-})] \, m(\dd s \, \dd y), \label{eq:Psi3definition}
	\end{align}
	holding for all $t \geq 0$ and all non-negative $\bbY$-predictable fields $\bigl(C_t(y)\bigr)_{t \geq 0, \, y \in O}$.
	
	It is clear that $\hat f_t = \int_I \cL\phi(x)\pi_t(\dd x)$, $t \geq 0$, as a straightforward computation shows.
	It is also immediate to notice that $\Psi^2_t(y) = \pi_{t-}(\phi)$, $t > 0$.
	
	We now proceed to compute $\Psi^3_t(y)$, $t \geq 0, \, y \in O$. We will see that it is not necessary to compute the term $\Psi^1_t(y)$.
	Let us elaborate the right hand side of (\ref{eq:Psi3definition})
	\begin{align*}
	&\quad \, \, \bbE \int_{(0,t] \times O} C_s(y) \, [\phi(X_s) - \phi(X_{s-})] \, m(\dd s \, \dd y) \\ 
	&=\bbE \int_I C_s(h(z)) \, [\phi(z) - \phi(X_{s-})] \, \ind_{h^{-1}(Y_{s-})^c}(z) \, n(\dd s \, \dd z) \\
	&=\bbE \int_0^t \int_I C_s(h(z)) \, [\phi(z) - \phi(X_{s-})] \, \ind_{h^{-1}(Y_{s-})^c}(z) \, \lambda(X_{s-}, \, \dd z) \, \dd s
	\end{align*}
	where the former passage is justified by the same reasoning as in the beginning of the proof of Lemma \ref{lemma:dualpredproj} and the latter one is due to the fact that $\lambda(X_{s-}, \, \dd z) \, \dd s$ is the $\bbX$-compensator of $n(\dd s \, \dd z)$ and that the integrand is a $\bbX$-predictable process.
	
	It is easy to check that the term
	$$\bbE \int_0^t \int_I C_s(h(z)) \, \phi(X_{s-}) \, \ind_{h^{-1}(Y_{s-})^c}(z) \, \lambda(X_{s-}, \, \dd z) \, \dd s $$
	leads to the expression of the process $\bigl(\Psi^1_t(y)\bigr)_{t \geq 0, \, y \in O}$, obtainable by elaborating the right hand side of (\ref{eq:Psi1definition}). Hence defining $\Psi_t(\omega, y) = \Psi^1_t(\omega, y) + \Psi^3_t(\omega, y)$, $\omega \in \Omega, \, t \geq 0, \, y \in O$, we are left to characterize the following equality, holding for all $t \geq 0$ and all non-negative $\bbY$-predictable random fields $\bigl(C_t(y)\bigr)_{t \geq 0, \, y \in O}$.
	\begin{equation}\label{eq:Psiequality}
	\bbE \int_0^t \int_O C_s(y) \, \Psi_s(y) \, \hat\mu_s(\dd y) \, \dd s =
	\bbE \int_0^t \int_I C_s(h(z)) \, \phi(z) \, \ind_{h^{-1}(Y_{s-})^c}(z) \, \lambda(X_{s-}, \, \dd z) \, \dd s. 
	\end{equation}
	
	Applying the freezing lemma and the Fubini-Tonelli theorem to the right hand side of (\ref{eq:Psiequality}) and noticing that $X_s = X_{s-}$ and $\pi_s = \pi_{s-}, \, \dd s$-a.e., we get that
	\begin{align*}
	&\quad \, \, \bbE \int_0^t \int_I C_s(h(z)) \, \phi(z) \, \ind_{h^{-1}(Y_{s-})^c}(z) \, \lambda(X_{s-}, \, \dd z) \, \dd s \\
	&= \bbE \int_0^t \int_I \int_I C_s(h(z)) \, \phi(z) \, \ind_{h^{-1}(Y_{s-})^c}(z) \, \lambda(x, \, \dd z) \, \pi_{s-}(\dd x) \, \dd s \\
	&= \bbE \int_0^t \int_O C_s(y) \, \psi_s(\dd y) \, \dd s = \bbE \int_0^t \int_O C_s(y) \, \Psi_s(y) \, \hat\mu_s(\dd y) \, \dd s.
	\end{align*}
	Therefore, if we define the random field $\bigl(\Psi_t(y)\bigr)_{t \geq 0, \, y \in O}$ as in (\ref{eq:Psifield}) we get (\ref{eq:filtering1}). The random field $\bigl(\Psi_t(y)\bigr)_{t \geq 0, \, y \in O}$ is well defined and $\bbY$-predictable since $\psi_t(\omega; \, \dd y) \, \dd t \, \bbP(\dd \omega) \ll \hat \mu_t(\omega; \dd y) \, \dd t \, \bbP(\dd \omega)$, as it is straightforward to verify, and both are measures on $(\Omega \times (0,+\infty) \times O)$ equipped with the product $\sigma$-algebra given by the $\bbY$-predictable $\sigma$-field and $\cO$.
\end{proof}

Before stating an explicit equation for the filtering process $\pi(\phi)$, we need to define an operator, denoted by $H$. The notation adopted is due to \citep{confortola:filt}, where the authors discuss the case of a signal process $X$ given by a finite-state Markov chain. The aim is to characterize, for each $n \in \N$, the probability measure $\pi_{\tau_n}$, \ie the filtering process evaluated at each jump time $\tau_n$ of the observed process $Y$ (we will use $H$ also to characterize the initial value $\pi_0$). This will be done by identifying $\pi_{\tau_n}$ as a probability measure obtained via this operator and depending on the position $Y_{\tau_n}$ of the observed process at the $n$-th jump time and on a specific random measure. This measure will be determined by the values $\pi_{\tau_n-}$ and $Y_{\tau_n-}$ and by the rate transition measure $\lambda$. 

For the operator $H$ to be well defined, we need the following Proposition.

\begin{proposition}\label{prop:Hprobmeasure}
	Let $\mu \in \cM_+(I)$ be fixed. Then there exists a probability measure $\rho_y$ on $(I, \cI)$ such that for all $\phi \in \dB_b(I)$ and $\mu \circ h^{-1}$-almost all $y \in O$ it holds
	\begin{equation*}
	\frac{\phi \mu \circ h^{-1}(\dd \upsilon)}{\mu \circ h^{-1}(\dd \upsilon)}(y) = \int_I \phi(z) \, \rho_y(\dd z).
	\end{equation*}
	Moreover the set
	\begin{equation*}
	A \coloneqq \{z \in I \colon \rho_{h(z)}(G) = \ind_G(z), \, \forall G \in \cH\}, \quad \cH \coloneqq h^{-1}(\cO)
	\end{equation*}
	is such that $\mu(A^c) = 0$ and 
	\begin{equation*}
	\rho_y(h^{-1}(y)) = 1, \quad \mu \circ h^{-1}\text{-a.e.}
	\end{equation*}
\end{proposition}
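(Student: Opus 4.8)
The statement is a \emph{disintegration} result: it says that the finite measure $\mu$ possesses a regular conditional distribution $(\rho_y)_{y\in O}$ given the sub-$\sigma$-algebra $\cH = h^{-1}(\cO)$, and that this conditional distribution is proper, in the sense that $\rho_y$ is carried by the fibre $h^{-1}(y)$ for $\mu\circ h^{-1}$-almost every $y$. The plan is to build $(\rho_y)$ from a regular conditional probability on the Polish space $I$, to factorize it through $h$ by the factorization (Doob--Dynkin) lemma, to read off the Radon--Nikodym identity from the defining property of the conditional distribution, and finally to obtain both $\mu(A^c)=0$ and the concentration $\rho_y\bigl(h^{-1}(y)\bigr)=1$ by a monotone-class argument that uses the countable generation of $\cH$. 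We may first discard the trivial case $\mu=0$, in which every assertion is vacuous; normalizing then (which changes neither the null sets of $\mu$ and of $\mu\circ h^{-1}$ nor the Radon--Nikodym derivative at issue), we assume $\mu\in\cP(I)$ and write $\nu\coloneqq\mu\circ h^{-1}\in\cP(O)$.

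Since $I$ is a complete and separable metric space, there is a regular conditional probability $Q\colon I\times\cI\to[0,1]$ of $\mu$ given $\cH$: $z\mapsto Q(z,B)$ is $\cH$-measurable for every $B\in\cI$, $Q(z,\cdot)$ is a probability measure on $\cI$ for every $z\in I$, and $\int_H Q(z,B)\,\mu(\dd z)=\mu(B\cap H)$ for all $H\in\cH$, $B\in\cI$. Being $\sigma(h)$-measurable, $Q(\cdot,B)$ factorizes through $h$: by the factorization lemma and the surjectivity of $h$ there is a unique measurable $g_B\colon O\to[0,1]$ with $Q(z,B)=g_B(h(z))$ for all $z$, and I set $\rho_y(B)\coloneqq g_B(y)$. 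Countable additivity of each $Q(z,\cdot)$, again using surjectivity of $h$, shows that $\rho_y$ is a probability measure on $(I,\cI)$ for every $y\in O$; and since $y\mapsto\rho_y(B)=g_B(y)$ is measurable, so is $y\mapsto\rho_y(\phi)\coloneqq\int_I\phi(z)\,\rho_y(\dd z)$ (bounded, by approximating $\phi$ with simple functions) for every $\phi\in\dB_b(I)$. Extending the defining identity from indicators to bounded measurable integrands yields $\int_H\rho_{h(z)}(\phi)\,\mu(\dd z)=\int_H\phi\,\dd\mu$ for all $H\in\cH$; taking $H=h^{-1}(C)$ with $C\in\cO$ and changing variables on the left-hand side gives $\int_C\rho_y(\phi)\,\nu(\dd y)=\int_{h^{-1}(C)}\phi\,\dd\mu=\phi\mu\circ h^{-1}(C)$. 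As $\nu=\mu\circ h^{-1}$, this exhibits $y\mapsto\rho_y(\phi)$ as a version of $\frac{\phi\mu\circ h^{-1}(\dd\upsilon)}{\mu\circ h^{-1}(\dd\upsilon)}$, so the first identity of the statement holds for $\mu\circ h^{-1}$-almost every $y\in O$.

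It remains to show $\mu(A^c)=0$ and $\rho_y\bigl(h^{-1}(y)\bigr)=1$ for $\nu$-almost every $y$, and this is the delicate part. As $O$ is separable metric, $\cO$, and therefore $\cH=h^{-1}(\cO)$, is generated by a countable $\pi$-system $(H_n)_{n\in\N}$. For each $n$, the maps $z\mapsto Q(z,H_n)$ and $\ind_{H_n}$ are $\cH$-measurable with equal integral over every $H\in\cH$, hence coincide off a $\mu$-null set $N_n\in\cH$; put $I_0\coloneqq I\setminus\bigcup_n N_n\in\cH$, which is $\mu$-conull. For each fixed $z\in I_0$, the collection $\{H\in\cH\colon Q(z,H)=\ind_H(z)\}$ is a $\lambda$-system containing the $\pi$-system $(H_n)$, hence equals $\cH$ by Dynkin's theorem; since $\rho_{h(z)}(H)=g_H(h(z))=Q(z,H)$, this means $\rho_{h(z)}(H)=\ind_H(z)$ for every $H\in\cH$, \ie $z\in A$. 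Thus $I_0\subseteq A$ and $\mu(A^c)\le\mu(I\setminus I_0)=0$. Moreover $h^{-1}(h(z))=h^{-1}(\{h(z)\})\in\cH$, since singletons are closed --- hence Borel --- in $O$; so for $z\in I_0$ we get $Q\bigl(z,h^{-1}(h(z))\bigr)=\ind_{h^{-1}(h(z))}(z)=1$. Writing $I_0=h^{-1}(O_0)$ with $O_0\in\cO$ and noting $\nu(O\setminus O_0)=\mu(I\setminus I_0)=0$, for every $y\in O_0$ we pick $z\in h^{-1}(y)$ and obtain $\rho_y\bigl(h^{-1}(y)\bigr)=g_{h^{-1}(y)}(y)=Q\bigl(z,h^{-1}(h(z))\bigr)=1$. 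This completes the argument. The single genuinely delicate point is this last paragraph: existence of $(\rho_y)$ and the Radon--Nikodym identity are the standard regular-conditional-probability construction, whereas the concentration of $\rho_y$ on the fibre $h^{-1}(y)$ --- which in turn forces $\mu(A^c)=0$ --- is exactly where the countable generation of $\cH$, and the monotone-class argument above, are essential.
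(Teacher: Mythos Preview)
Your proof is correct and, for the second half (the $\pi$--$\lambda$ argument showing $\mu(A^c)=0$ and $\rho_y(h^{-1}(y))=1$), essentially identical to the paper's. The difference lies in how you construct $(\rho_y)$. The paper builds $\rho_y$ from scratch: it first restricts to the case where $I$ is compact, defines a linear functional $L_y(\phi)=\frac{\phi\mu\circ h^{-1}(\dd\upsilon)}{\mu\circ h^{-1}(\dd\upsilon)}(y)$ on $\dC(I)$, checks positivity and normalisation on a countable dense $\Q$-linear subspace, invokes Riesz representation to produce $\rho_y$ (Lemma~\ref{lemma:Hprobmeasure}), extends to $\dB_b(I)$ by a monotone-class argument, and finally passes to general Polish $I$ by embedding into a compact metric space. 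You instead cite directly the existence of a regular conditional probability $Q(z,\cdot)$ of $\mu$ given $\cH$ on the Polish space $I$, and then use the Doob--Dynkin factorisation together with surjectivity of $h$ to re-parametrise the family by $y\in O$ rather than $z\in I$. Your route is more economical --- it outsources the Riesz/compactification machinery to a standard textbook theorem --- while the paper's is more self-contained (and indeed the paper explicitly acknowledges that its scheme is the one used to prove existence of regular conditional probabilities). Both arrive at the same object, and the paper's remark following the definition of the operator $H$ already points out that $H_{h(z)}[\mu]$ is precisely a regular conditional distribution of $\mu$ given $h^{-1}(\cO)$, which is exactly the identification your argument makes explicit.
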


\begin{proof}
	The scheme followed in this proof is the same used to prove the existence of a regular version of a conditional probability (see \eg \citep[Th. 89.1]{williams:diffusions}).
	To start, let us prove the first claim in the case where $I$ is a compact metric space. Thanks to Lemma~\ref{lemma:Hprobmeasure} (see Appendix~\ref{app:operatorH}) there exists a unique probability measure $\rho_y$ on $(I, \cI)$ such that
	\begin{equation*}
		\frac{\phi \mu \circ h^{-1}(\dd \upsilon)}{\mu \circ h^{-1}(\dd \upsilon)}(y) = \int_I \phi(z) \, \rho_y(\dd z), \quad \phi \in \dC(I)
	\end{equation*}	
	for all $y \in \supp(\mu \circ h^{-1})$. Clearly the set $\supp(\mu \circ h^{-1})^c$ has $\mu \circ h^{-1}$-measure zero and on this set we can define $\rho_y = \nu_y$, where $\nu_y$ is an arbitrary but fixed probability measure on $(I, \cI)$, such that $\nu_y\bigl(h^{-1}(y)\bigr) = 1$.
	Then, we get that for $\mu \circ h^{-1}$-almost all $y \in O$
	\begin{equation*}
		\frac{\phi \mu \circ h^{-1}(\dd \upsilon)}{\mu \circ h^{-1}(\dd \upsilon)}(y) = \int_I \phi(z) \, \rho_y(\dd z), \quad \phi \in \dC(I).
	\end{equation*}
	By a monotone class argument, the same equality holds for all $\phi \in \dB_b(I)$.
	
	To show that the second assertion holds, let us notice, first, that the $\sigma$-algebra $\cH$ is countably generated. In fact, since $O$ is a complete and separable metric space, its Borel $\sigma$-algebra $\cO$ can be written as $\cO = \sigma(C_1, C_2, \dots)$, for some countable collection $\cC = (C_i)_{i \in \N}$ of subsets of $O$. By a standard fact from measure theory (see \eg \citep[Corollary 1.2.9]{bogachev:measth}) we have that
	\begin{equation*}
		\cH = h^{-1}(\cO) =  h^{-1}\bigl(\sigma(\cC)\bigr) = \sigma\bigl(h^{-1}(\cC)\bigr)
	\end{equation*}
	hence the collection $h^{-1}(\cC) = \bigl(h^{-1}(C_1), h^{-1}(C_2), \dots\bigr)$ forms a countable class generating $\cH$.
	
	Now, let $\cK$ be the $\pi$-system formed by all finite intersections of sets in $h^{-1}(\cC)$ (notice that $\sigma(\cK) = \cH$) and define
	\begin{equation*}
		A_1 \coloneqq \{z \in I \colon \rho_{h(z)}(K) = \ind_K(z), \, \forall K \in \cK\}.
	\end{equation*}
	Fix $K \in \cK$. By definition of Radon-Nikodym derivative we have that for all $B \in \cO$
	\begin{multline*}
		\int_{h^{-1}(B)} \ind_K(z) \, \mu(\dd z) = \ind_K \mu \circ h^{-1} (B) = \int_B \int_I \ind_K(z) \, \rho_y(\dd z) \, \mu \circ h^{-1}(\dd y) = \\
		\int_B \rho_y(K) \, \mu \circ h^{-1}(\dd y) = \int_{h^{-1}(B)} \rho_{h(z)}(K) \, \mu(\dd z)
	\end{multline*}
	therefore $\rho_{h(z)}(K) = \ind_K(z), \, \mu$-a.e. on $\cH$.
	This equality holds for all $K \in \cK$ and since this is a countable collection of sets, we get that $\mu(A_1^c) = 0$.
	
	Next, for fixed $z \in A_1$, let $\cD \coloneqq \{G \in \cH \colon \rho_{h(z)}(G) = \ind_G(z)\}$. Obviously $I \in \cD$ and it is immediate to see that for any $D_1, D_2 \in \cD$ with $D_1 \subset D_2$
	\begin{equation*}
		\rho_{h(z)}(D_2 \setminus D_1) = \rho_{h(z)}(D_2) - \rho_{h(z)}(D_1) = \ind_{D_2}(z) - \ind_{D_1}(z) = \ind_{D_2 \setminus D_1}(z).
	\end{equation*}
	In addition, for any sequence $(D_n)_{n \in \N} \subset \cD, \, D_n \uparrow D$
	\begin{equation*}
			\rho_{h(z)}(D) = \lim_{n \to \infty} \rho_{h(z)}(D_n) = \lim_{n \to \infty} \ind_{D_n}(z) = \ind_D(z).
	\end{equation*}
	Hence, $\cD$ is a d-system, clearly containing the $\pi$-system $\cK$. Therefore, by Dynkin's $\pi-\lambda$ theorem we get that $\sigma(\cK) = \cH \subset \cD$. This implies that the equality
	\begin{equation*}
		\rho_{h(z)}(G) = \ind_G(z), \quad \text{for all } G \in \cH
	\end{equation*} 
	holds for all $z \in A_1$, hence $\mu(A^c) = 0$.
	
	Finally, fix $y \in O$. Clearly $h^{-1}(y) \in \cH$ and from the previous discussion we have that for $\mu$-almost all $z \in I$
	\begin{equation*}
		\rho_{h(z)}(h^{-1}(y)) = \ind_{h^{-1}(y)}(z) =
		\begin{dcases}
			1, &\text{if } z \in h^{-1}(y) \\
			0, &\text{if } z \notin h^{-1}(y)
		\end{dcases}
		.
	\end{equation*}
	Notice, also, that
	\begin{equation*}
		\rho_{h(z)}(h^{-1}(y)) =
		\begin{dcases}
			\rho_y(h^{-1}(y)),	&\text{if } z \in h^{-1}(y) \\
			\rho_\upsilon(h^{-1}(y)),	&\text{if } z \notin h^{-1}(y)
		\end{dcases}
	\end{equation*}
	for some $\upsilon \in O$, $\upsilon \ne y$.
	Therefore, $\rho_y(h^{-1}(y)) = 1$ for $\mu \circ h^{-1}$-almost all $y \in O$.
	
	To prove the claim in the case where $I$ is a complete and separable metric space, it suffices to remember (see \eg \citep[Theorem A.7]{bain:fundofstochfilt}) that $I$ is homeomorfic to a Borel subset of some compact metric space $J$ (in particular, $I \in \cB(J)$). After extending the measure $\mu$ to $(J, \cB(J))$ in the usual way, one can apply the result just shown to the measure space $(J, \cB(J), \mu)$, considering $\cH \coloneqq \sigma\bigl(h^{-1}(\cO)\bigr) \subset \cB(J)$. To conclude, it is enough to set $\rho_y$, for each $y \in O$, to be the restriction to $h^{-1}(\cO)$ of the probability measure found with the above procedure.
\end{proof}

Thanks to this result we can provide the following Definition of the operator $H$.

\begin{definition}[Operator $H$]\label{def:operatorH}
	For each $y \in O$ the operator $H_y \colon \cM_+(I) \to \cP(I)$ is given by
	\begin{equation*}
	H_y[\mu] \coloneqq 
	\begin{dcases}
	\rho_y,	&\text{if } y \in \supp(\mu \circ h^{-1})\\
	\nu_y,	&\text{if } y \notin \supp(\mu \circ h^{-1})
	\end{dcases}
	\end{equation*}
	where $\rho_y$ is the unique probability measure on $(I, \cI)$ satisfying
	\begin{equation*}
	\frac{\phi \mu \circ h^{-1}(\dd y)}{\mu \circ h^{-1}(\dd y)}(y) = \int_I \phi(z) \, \rho_y(\dd z), \quad \phi \in \dB_b(I)
	\end{equation*}
	and $\nu_y$ is an arbitrary probability measure on $(I, \cI)$, such that $\nu_y\bigl(h^{-1}(y)\bigr) = 1$.
\end{definition}

\begin{rem}
	The operator $H$ presents strong analogies with the regular version of a conditional probability, not only from a technical perspective as pointed out in the proof of Proposition~\ref{prop:Hprobmeasure}. Consider $(I, \cI) \eqqcolon (\Omega, \cF)$ as the measurable sample space; take $\mathbf{P}\in \cP(I)$ and define the sub-$\sigma$-algebra $h^{-1}(\cO) \eqqcolon \cG$.
	Then what we will denote by $H_{h(z)}[\mathbf{P}]$ (in this example, we should write $H_{h(\omega)}[\mathbf{P}]$) corresponds to a regular version of the conditional distribution $\mathbf{P}$ given $\cG$, \ie a function $(\mathbf{P} \mid \cG) \colon \cF \times \Omega \to [0,1]$ such that
	\begin{itemize}
		\item the map $\omega \mapsto (\mathbf{P} \mid \cG)(F, \omega)$ is a version of $\mathbf{P}(F \mid \cG)$ for all $F \in \cF$
		\item the function $F \mapsto (\mathbf{P} \mid \cG)(F, \omega)$ is a probability measure on $(\Omega, \cF)$ for all $\omega \in \Omega$.
	\end{itemize}
	 However, in the definition of the operator $H$ we do not use as "parameter" space (\ie the sample space) the set $I$, but the state space $O$ of the observed process. Moreover, the operator $H$ acts on the larger space $\cM_+(I)$.
\end{rem}

\begin{rem}\label{rem:operatorH}
	A more explicit definition of the operator $H$ can be obtained whenever this operator acts on a measure $\mu \in \cM_+(I)$ such that $\mu \circ h^{-1}$ is discrete. It is clear that in this case we have that
	$$H_y[\mu](\phi) = \frac{\phi \mu\bigl(h^{-1}(y)\bigr)}{\mu\bigl(h^{-1}(y)\bigr)} = \frac{1}{\mu\bigl(h^{-1}(y)\bigr)} \int_{h^{-1}(y)} \phi(z) \, \mu(\dd z), \quad y \in O$$
	under the usual assumption $\frac{0}{0} \coloneqq 0$.
	Otherwise said, the probability measure $H_y[\mu]$ is
	$$H_y[\mu](\dd z) = \frac{1}{\mu\bigl(h^{-1}(y)\bigr)} \ind_{h^{-1}(y)}(z) \, \mu(\dd z), \quad y \in O.$$
	As an example of such a setting, see \citep{calvia:optcontrol, confortola:filt}, where both the state spaces $I$ and $O$ of the unobserved and observed processes are assumed to be finite sets.
	
	This simplification can be interpreted in a Bayesian setting, by looking at this simple dominated model. We can view $(I, \cI)$ as the parameter space and $(O, \cO)$ as the data one. If we fix a \emph{prior distribution} $\mu$ on $(I, \cI)$ of our unknown parameter $X$, such that $\mu \circ h^{-1}$ is discrete, then we can interpret $\mu \circ h^{-1}$ as the \emph{likelihood}, \ie the law on $(O, \cO)$ of the datum $Y$ given $X$ and, finally, we can see $H_Y[\mu]$ as the \emph{posterior distribution} of $X$ given $Y$. 
	
	Unfortunately, this setting cannot be generalized to encompass the range of possible cases covered by our model and, as it is known, the Bayesian framework fails in a non-dominated setting. 
\end{rem}

We are now ready to state the final version of the filtering equation, giving the dynamics of the process $\pi(\phi)$.

\begin{theorem}[Filtering equation]\label{thm:filteringequation}
	Let $\phi \in \dB_b(I)$ be fixed. Let us define, for each fixed $y \in O$, the linear operator $\cA_y \colon \dB_b(I) \to \dB_b(I)$ as
	\begin{equation}\label{eq:operatorAy}
	\cA_y \phi(x) \coloneqq \cL \phi(x) - \int_I \ind_{h^{-1}(y)^c}(z) \phi(z) \lambda(x, \dd z), \quad x \in I
	\end{equation}
	and let us denote by $\mathsf{1} \colon I \to \R$ the function identically equal to $1$.
	
	The process $\pi(\phi)$ satisfies for all $t \geq 0$ and $\bbP$-a.s. the following equation
	\begin{equation} \label{eq:filteringintegral}
	\begin{split}
	\pi_t(\phi) &= H_{Y_0}[\mu](\phi) \\ &+
	\int_0^t \int_I \cA_{Y_{s-}} \, \phi(x) \, \pi_{s-}(\dd x) \, \dd s -
	\int_0^t \pi_{s-}(\phi) \int_I \cA_{Y_{s-}} \, \one(x) \, \pi_{s-}(\dd x) \, \dd s \\
	&+\sum_{0 < \tau_n \leq t} \biggl\{H_{Y_{\tau_n}}[\mu_n](\phi) - \pi_{\tau_n-}(\phi)\biggr\},
	\end{split}
	\end{equation}
	or, in differential form
	\begin{equation} \label{eq:filteringdifferential}
	\begin{dcases}
	\frac{\dd}{\dd t} \pi_t(\phi) = \pi_t(\cA_{Y_{\tau_n}}\phi) - \pi_t(\phi) \pi_t(\cA_{Y_{\tau_n}} \one), & t \in [\tau_n, \tau_{n+1}), \, n \in \N_0 \\
	\pi_{\tau_n}(\phi) = H_{Y_{\tau_n}}[\mu_n](\phi), & n \in \N_0
	\end{dcases}
	\end{equation}
	where the (random) measures $\mu_n$ on $(I, \cI)$ are given by
	\begin{equation}\label{eq:measuresmun}
	\mu_n(\dd z) \coloneqq
	\begin{dcases}
	\mu(\dd z), & n = 0\\
	\ind_{h^{-1}(Y_{\tau_n-})^c}(z) \int_I \lambda(x, \dd z) \, \pi_{\tau_n-}(\dd x), & n \in \N
	\end{dcases}.
	\end{equation}  
\end{theorem}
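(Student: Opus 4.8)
The plan is to derive (\ref{eq:filteringintegral}) from the Kushner--Stratonovich equation (\ref{eq:filtering1}) by two reductions: first rewrite the \emph{compensator part} of the stochastic integral so that the generator $\cL$ is absorbed into the operators $\cA_{Y_{s-}}$, and then identify the \emph{purely atomic part} (the integral against $m$ itself) with the jump sum involving the operator $H$. The differential form (\ref{eq:filteringdifferential}) will then be read off by localizing on the intervals $[\tau_n,\tau_{n+1})$.

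For the first reduction I would split $\tilde m(\dd s\,\dd y)=m(\dd s\,\dd y)-\hat\mu_s(\dd y)\,\dd s$ in (\ref{eq:filtering1}) and work on the absolutely continuous term $\int_0^t\int_O\{\Psi_s(y)-\pi_{s-}(\phi)\}\,\hat\mu_s(\dd y)\,\dd s$. Using $h^{-1}(O\setminus\{Y_{s-}\})=h^{-1}(Y_{s-})^c$ together with the definitions (\ref{eq:predprojY}) of $\hat\mu$ and (\ref{eq:Psifield}) of $\Psi$ gives $\int_O\Psi_s(y)\,\hat\mu_s(\dd y)=\int_I\!\int_I\phi(z)\ind_{h^{-1}(Y_{s-})^c}(z)\,\lambda(x,\dd z)\,\pi_{s-}(\dd x)$ and $\hat\mu_s(O)=\int_I\lambda\bigl(x,h^{-1}(Y_{s-})^c\bigr)\,\pi_{s-}(\dd x)$. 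Comparing with the definition (\ref{eq:operatorAy}) — namely $\int_I\ind_{h^{-1}(y)^c}(z)\phi(z)\lambda(x,\dd z)=\cL\phi(x)-\cA_y\phi(x)$ and, for $\phi=\one$, $\int_I\ind_{h^{-1}(y)^c}(z)\lambda(x,\dd z)=-\cA_y\one(x)$ — yields $\int_O\Psi_s(y)\hat\mu_s(\dd y)=\pi_{s-}(\cL\phi)-\pi_{s-}(\cA_{Y_{s-}}\phi)$ and $\hat\mu_s(O)=-\pi_{s-}(\cA_{Y_{s-}}\one)$. Substituting back, the Dynkin drift $\int_0^t\pi_{s-}(\cL\phi)\,\dd s$ cancels exactly against the $\pi_{s-}(\cL\phi)$ produced by the compensator, and what remains is $\int_0^t\{\pi_{s-}(\cA_{Y_{s-}}\phi)-\pi_{s-}(\phi)\,\pi_{s-}(\cA_{Y_{s-}}\one)\}\,\dd s$, i.e.\ the two Lebesgue integrals in (\ref{eq:filteringintegral}).

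For the second reduction, since $m=\sum_n\delta_{(\tau_n,\eta_n)}\ind_{\{\tau_n<\infty\}}$, the remaining term equals $\sum_{0<\tau_n\le t}\{\Psi_{\tau_n}(\eta_n)-\pi_{\tau_n-}(\phi)\}$, so it suffices to show $\Psi_{\tau_n}(\eta_n)=H_{Y_{\tau_n}}[\mu_n](\phi)$ $\bbP$-a.s., with $\mu_n$ as in (\ref{eq:measuresmun}). I would introduce the $\bbY$-predictable measure-valued process $\bar\mu_s(\dd z):=\ind_{h^{-1}(Y_{s-})^c}(z)\int_I\lambda(x,\dd z)\,\pi_{s-}(\dd x)$, which lies in $\cM_+(I)$ by Assumption~\ref{hp:lambda}, satisfies $\bar\mu_{\tau_n}=\mu_n$, and by the Fubini computations above fulfils $\bar\mu_s\circ h^{-1}=\hat\mu_s(\cdot)$ and $\phi\bar\mu_s\circ h^{-1}=\psi_s(\cdot)$. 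Applying Proposition~\ref{prop:Hprobmeasure} for each fixed $(\omega,s)$ to the measure $\bar\mu_s(\omega)$, the field $\Psi'_s(\omega,y):=H_y[\bar\mu_s(\omega)](\phi)$ is a $\bbY$-predictable version of $\Psi_s(y)$ with respect to $\hat\mu_s(\omega;\dd y)\,\dd s\,\bbP(\dd\omega)$; since $\hat\mu_s(\dd y)\,\dd s$ is the $\bbY$-compensator of $m$ (Lemma~\ref{lemma:dualpredproj}), replacing $\Psi$ by $\Psi'$ in the integral against $m$ does not change its value $\bbP$-a.s., and evaluating at the atoms gives $\Psi'_{\tau_n}(\eta_n)=H_{\eta_n}[\mu_n](\phi)=H_{Y_{\tau_n}}[\mu_n](\phi)$. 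For the initial term, from $\cY_0=\sigma(h(X_0))\vee\cN$ and $X_0\sim\mu$ a direct check ($\bbE[\phi(X_0)\ind_B(Y_0)]=\int_{h^{-1}(B)}\phi\,\dd\mu=\phi\mu\circ h^{-1}(B)$ for $B\in\cO$) identifies $\pi_0(\phi)=\bbE[\phi(X_0)\mid Y_0]$ with $H_{Y_0}[\mu](\phi)=H_{Y_0}[\mu_0](\phi)$ $\bbP$-a.s., since $Y_0\in\supp(\mu\circ h^{-1})$ a.s. This gives (\ref{eq:filteringintegral}). Finally, on $[\tau_n,\tau_{n+1})$ the jump sum is constant, the jumps of $\pi$ occur only at the $\tau_k$ (so $\pi_{s-}$ may be replaced by $\pi_s$ in the Lebesgue integrals), and $Y_{s-}=Y_{\tau_n}$ for $s\in(\tau_n,\tau_{n+1})$; differentiating in $t$ yields the ODE in (\ref{eq:filteringdifferential}), while the jump term $H_{Y_{\tau_n}}[\mu_n](\phi)-\pi_{\tau_n-}(\phi)$ gives the boundary condition.

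The step I expect to be the main obstacle is the identification of $\Psi$ with $H$ at the jump times: $\Psi$ is defined only as a Radon--Nikodym derivative on $\Omega\times(0,+\infty)\times O$, hence only up to $\hat\mu_s\,\dd s\,\bbP$-null sets, so the pointwise identity $\Psi_{\tau_n}(\eta_n)=H_{Y_{\tau_n}}[\mu_n](\phi)$ is meaningful only after choosing the version $\Psi'=H_{\,\cdot}[\bar\mu_{\,\cdot}](\phi)$, and this choice is legitimate precisely because $\hat\mu_s\,\dd s$ compensates $m$. Making $\Psi'$ a genuine predictable random field requires the joint measurability of $(\omega,s,y)\mapsto H_y[\bar\mu_s(\omega)]$, which is supplied by the regularity properties of the operator $H$ (Proposition~\ref{prop:Hprobmeasure} and the appendix results on $H$); everything else is a matter of bookkeeping with Fubini--Tonelli and the definition (\ref{eq:operatorAy}).
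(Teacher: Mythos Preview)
Your proof is correct and follows essentially the same route as the paper's: both start from the Kushner--Stratonovich equation (\ref{eq:filtering1}), split the integral against $\tilde m$ into its compensator and atomic parts, absorb the Dynkin drift into the $\cA_{Y_{s-}}$-terms via the identities you wrote, and identify $\Psi_{\tau_n}(\eta_n)$ with $H_{Y_{\tau_n}}[\mu_n](\phi)$ by introducing the predictable measure-valued process $\bar\mu_s$ (the paper calls it $\nu_t$) and passing from $\hat\mu_s\,\dd s\,\bbP$-a.e.\ to $m\,\bbP$-a.e.\ via the compensator property. The only difference is presentational: the paper does not explicitly discuss the joint measurability of $(\omega,s,y)\mapsto H_y[\bar\mu_s(\omega)]$ that you flag, but simply argues the equality of $\Psi$ and $H_\cdot[\nu_\cdot](\phi)$ almost everywhere with respect to the relevant measures.
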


\begin{proof}
	To prove this Theorem it suffices to elaborate the terms appearing in (\ref{eq:filtering1}). We show that the integral form (\ref{eq:filteringintegral}) holds true. The differential form (\ref{eq:filteringdifferential}) follows immediately.
	
	Let us start by computing $\pi_0(\phi) = \bbE [\phi(X_0) \mid Y_0]$. By definition of conditional expectation the equality
	$$\bbE [Z \pi_0(\phi)] = \bbE [Z \phi(X_0)]$$
	holds for all bounded and $\sigma(Y_0)$ measurable random variables $Z$. Otherwise said, we have that
	$$\bbE[g(Y_0) f(Y_0)] = \bbE[g(Y_0) \phi(X_0)]$$
	for all bounded and measurable functions $g \colon O \to \R$, where $f \colon O \to \R$ is a measurable function such that $f(Y_0) = \pi_0(\phi), \, \bbP$-a.s. (notice that $\pi_0(\phi)$ is $\sigma(Y_0)$ measurable).
	Then we can write
	$$\bbE [g(Y_0) f(Y_0)] = \int_I g\bigl(h(x)\bigr) f\bigl(h(x)\bigr) \, \mu(\dd x) = \int_O g(y) f(y) \, \mu \circ h^{-1}(\dd y)$$
	on one hand. On the other hand
	$$\bbE[g(Y_0) \phi(X_0)] = \int_I g\bigl(h(x)\bigr) \phi(x) \, \mu(\dd x) = \int_O g(y) \, \phi \mu \circ h^{-1}(\dd y).$$
	Therefore
	$$\int_O g(y) f(y) \, \mu \circ h^{-1}(\dd y) = \int_O g(y) \, \phi \mu \circ h^{-1}(\dd y)$$
	for all bounded and measurable functions $g \colon O \to \R$, whence
	$$f(y) = \frac{\phi \mu \circ h^{-1}(\dd y)}{\mu \circ h^{-1}(\dd y)}(y) = \int_I \phi(z) \, H_y[\mu](\dd z) = H_y[\mu](\phi), \quad y \in O$$
	and finally $\pi_0(\phi) = f(Y_0) = H_{Y_0}[\mu](\phi), \, \bbP$-a.s.\,.
	
	Let us now analyze the term
	$$ \int_{(0,t] \times O} \biggl\{\Psi_s(y) - \pi_{s-}(\phi)\biggr\} \, \bigl[ m(\dd s \, \dd y) - \hat \mu_s(\dd y) \, \dd s \bigr] $$
	appearing in (\ref{eq:filtering1}).
	
	From the definition of the field $\bigl(\Psi_t(y)\bigr)_{t \geq 0, \, y \in O}$ given in (\ref{eq:Psifield}) and recalling that $\cL \, \one = 0$, we easily get that $\bbP$-a.s.
	\begin{align*}
		&{ }\int_0^t \int_O \biggl\{\Psi_s(y) - \pi_{s-}(\phi)\biggr\} \, \hat \mu_s(\dd y) \, \dd s \\
		&= \int_0^t \int_I \biggl[\int_I \ind_{h^{-1}(Y_{s-})^c}(z) \phi(z) \lambda(x, \dd z) - \pi_{s-}(\phi) \cA_{Y_{s-}} \, \one(x) \biggr] \pi_{s-}(\dd x) \, \dd s.
	\end{align*}
	Therefore $\bbP$-a.s.
	\begin{align*}
	&\int_0^t \int_I \cL\phi(x) \, \pi_{s-}(\dd x) \, \dd s - \int_{(0,t] \times O} \biggl\{\Psi_s(y) - \pi_{s-}(\phi)\biggr\} \, \hat \mu_s(\dd y) \, \dd s \\
	= &\int_0^t \int_I \cA_{Y_{s-}} \, \phi(x) \, \pi_{s-}(\dd x) \, \dd s -
	\int_0^t \pi_{s-}(\phi) \int_I \cA_{Y_{s-}} \, \one(x) \, \pi_{s-}(\dd x) \, \dd s.
	\end{align*}
	
	We are left to elaborate the term
	$$\int_{(0,t] \times O} \biggl\{\Psi_s(y) - \pi_{s-}(\phi)\biggr\} \, m(\dd s \, \dd y) = \sum_{0 < \tau_n \leq t} \biggl\{\Psi_{\tau_n}(Y_{\tau_n}) - \pi_{\tau_n-}(\phi)\biggr\}.$$
	Let us recall that the $\bbY$-predictable random field $\bigl(\Psi_t(y)\bigr)_{t \geq 0, \, y \in O}$ satisfies
	$$\bbE \int_0^t \int_O C_s(y) \, \psi_s(\dd y) \, \dd s = \bbE \int_0^t \int_O C_s(y) \, \Psi_s(y) \, \hat\mu_s(\dd y) \, \dd s$$
	for all non-negative $\bbY$-predictable random fields $\bigl(C_t(y)\bigr)_{t \geq 0, \, y \in O}$.
	A simple computation involving just the Fubini-Tonelli theorem shows that we can rewrite the previous equation as
	$$\bbE \int_0^t \int_O C_s(y) \, \phi \nu_t \circ h^{-1}(\dd y) \, \dd s = \bbE \int_0^t \int_O C_s(y) \, \Psi_s(y) \, \nu_t \circ h^{-1}(\dd y) \, \dd s$$
	where the $\bbY$-predictable random measure $\nu_t(\omega; \, \dd z) \, \dd t$ is given by
	$$\nu_t(\dd z) \, \dd t = \ind_{h^{-1}(Y_{t-})^c}(z) \int_I \lambda(x, \dd z) \, \pi_{t-}(\dd x) \, \dd t.$$
	Therefore, we have that
	\begin{equation*}
		\Psi_t(\omega, y) = H_y[\nu_t(\omega)](\phi), \quad \nu_t(\omega) \circ h^{-1}(\dd y) \, \dd t \, \bbP(\dd \omega)\text{-a.e.}
	\end{equation*}
	or, equivalently
	\begin{equation*}
		\Psi_t(\omega, y) = H_y[\nu_t(\omega)](\phi), \quad  m(\omega, \dd t \, \dd y) \, \bbP(\dd \omega)\text{-a.e.}
	\end{equation*}
	whence we deduce that $\Psi_{\tau_n}(Y_{\tau_n}) = H_{Y_{\tau_n}}[\mu_n](\phi), \, \bbP$-a.s, for all $n \in \N$.
\end{proof}

The differential form (\ref{eq:filteringdifferential}) of the filtering equation gives an important insight on the structure of the filtering process $\pi(\phi)$. 
In fact, in each time interval $[\tau_n, \tau_{n+1})$, $n \in \N_0$, the filtering process satisfies $\bbP$-a.s. a deterministic differential equation (observe that since $Y_t = Y_{\tau_n}$ for all $t \in [\tau_n, \tau_{n+1})$, the operator $\cA_{Y_t}$ is defined and fixed at each jump time $\tau_n$).
This will be a crucial fact in the characterization of $\pi$ as a PDP.

The final and most important Theorem of this Section shows that, starting from the filtering equation (\ref{eq:filteringintegral}) we can obtain an explicit equation for the measure-valued filtering process $\pi$. It provides the evolution equation satisfied by the filtering process $\pi$ on the space $\cP(I)$.

\begin{theorem}\label{thm:filteringproc}
	For each fixed $y \in O$ let $\cB_y \colon \cM(I) \to \cM(I)$ be the operator
	\begin{equation}\label{eq:operatorBy}
	\cB_y \nu(\dd z) \coloneqq \ind_{\pre(y)}(z) \int_I \lambda(x, \dd z) \, \nu(\dd x) - \lambda(z) \nu(\dd z), \quad \nu \in \cM(I).
	\end{equation}
	The filtering process $\pi = (\pi_t)_{t \geq 0}$ satisfies for all $t \geq 0$ and $\bbP$-a.s. the following SDE
	\begin{equation} \label{eq:filteringprocess}
	\begin{dcases}
	\frac{\dd}{\dd t} \pi_t = \cB_{Y_t} \pi_t - \pi_t \, \cB_{Y_t}\pi_t(I), \quad t \in [\tau_n, \tau_{n+1}), \, n \in \N_0 \\
	\pi_{\tau_n} = H_{Y_{\tau_n}}[\mu_n], \quad n \in \N_0
	\end{dcases}
	\end{equation}
	where the random measures $\mu_n, \, n \in \N_0$, were defined in (\ref{eq:measuresmun}).
\end{theorem}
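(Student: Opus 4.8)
The plan is to derive \eqref{eq:filteringprocess} from the scalar filtering equation of Theorem~\ref{thm:filteringequation} by testing it against sufficiently many $\phi\in\dB_b(I)$; the one computation with genuine content is the rewriting of the drift.

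\emph{Rewriting the drift.} First I would check that, for every $\nu\in\cM_+(I)$, every $y\in O$ and every $\phi\in\dB_b(I)$,
\begin{equation*}
\nu(\cA_y\phi)=\cB_y\nu(\phi),\qquad\text{and in particular}\qquad\nu(\cA_y\one)=\cB_y\nu(I).
\end{equation*}
This follows by expanding $\cL$ in \eqref{eq:operatorAy} to get $\cA_y\phi(x)=\int_I\ind_{\pre(y)}(z)\,\phi(z)\,\lambda(x,\dd z)-\lambda(x)\phi(x)$, integrating against $\nu(\dd x)$, and interchanging the order of integration in the first term; this is licit because $\sup_{x\in I}\lambda(x)<+\infty$ (Assumption~\ref{hp:lambda}) and $\phi$ is bounded, and the outcome is exactly the pairing of $\phi$ with the measure $\cB_y\nu$ of \eqref{eq:operatorBy} — the same bound showing $\cB_y\nu\in\cM(I)$. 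Substituting this into the differential form \eqref{eq:filteringdifferential}, and recalling that $Y_t\equiv Y_{\tau_n}$ on $[\tau_n,\tau_{n+1})$, one gets that for each fixed $\phi\in\dB_b(I)$, $\bbP$-a.s.,
\begin{equation*}
\frac{\dd}{\dd t}\pi_t(\phi)=\cB_{Y_t}\pi_t(\phi)-\pi_t(\phi)\,\cB_{Y_t}\pi_t(I),\quad t\in[\tau_n,\tau_{n+1}),\qquad\pi_{\tau_n}(\phi)=H_{Y_{\tau_n}}[\mu_n](\phi),
\end{equation*}
with $\mu_n$ given by \eqref{eq:measuresmun}.

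\emph{Upgrading to a measure-valued statement.} Since $I$ is a complete separable metric space, I would fix a countable family $\{\phi_k\}\subset\dC_b(I)$ that determines measures in $\cM(I)$ (e.g.\ a convergence-determining family for $\cP(I)$, augmented with $\one$), and work on the single $\bbP$-null set outside which the identities above hold for every $k$ and every $n$. The jump relations then read $\pi_{\tau_n}=H_{Y_{\tau_n}}[\mu_n]$ in $\cP(I)$, which is the second line of \eqref{eq:filteringprocess}. For the continuous part I would fix $n$, set $y\coloneqq Y_{\tau_n}$, and regard \eqref{eq:filteringprocess} on $[\tau_n,\tau_{n+1})$ as the ordinary differential equation $\dot\nu=F_y(\nu)$, $F_y(\nu)\coloneqq\cB_y\nu-\nu\,\cB_y\nu(I)$, in the Banach space $\bigl(\cM(I),\norm{\cdot}_{\mathrm{TV}}\bigr)$. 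By the bound above, $\cB_y$ is bounded linear with $\norm{\cB_y}\le 2\sup_{x\in I}\lambda(x)$, so $F_y$ is Lipschitz on bounded sets and the Cauchy problem is well posed; moreover $\cP(I)$ is invariant for its flow — the term $-\nu\,\cB_y\nu(I)$ keeps the total mass equal to $1$, and positivity is preserved because $\cA_y$ generates a sub-Markovian semigroup on $\dB_b(I)$ (the usual normalisation argument relating the Zakai and Kushner--Stratonovich equations) — so the solution $\bar\pi$ issued from $\pi_{\tau_n}\in\cP(I)$ stays in $\cP(I)$ throughout $[\tau_n,\tau_{n+1})$. Integrating the scalar identities above and using that $\{\phi_k\}$ determines measures, one sees that $\pi$ and $\bar\pi$ both solve $\nu_t=\pi_{\tau_n}+\int_{\tau_n}^tF_y(\nu_s)\,\dd s$ in $\cM(I)$ on $[\tau_n,\tau_{n+1})$; Grönwall's lemma in $\norm{\cdot}_{\mathrm{TV}}$, together with the Lipschitz bound for $F_y$ on the bounded set $\cP(I)$, then forces $\pi_t=\bar\pi_t$ there. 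Finally, since $[0,+\infty)=\bigcup_{n\in\N_0}[\tau_n,\tau_{n+1})$ because $\tau_\infty=+\infty$ $\bbP$-a.s.\ under Assumption~\ref{hp:lambda}, one patches these solutions together and obtains \eqref{eq:filteringprocess}.

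\emph{Main obstacle.} The Fubini rewriting and the fact that $\cA_y$ is a sub-Markov generator are routine. The delicate point is the passage from the $\phi$-by-$\phi$, $\bbP$-a.s.\ statements furnished by Theorem~\ref{thm:filteringequation} to a genuine identity between $\cM(I)$-valued processes: one must take the test class countable (so that the exceptional null set is $\phi$-independent) and — if \eqref{eq:filteringprocess} is to be read as a Banach-space ODE rather than merely tested against functions — establish well-posedness of the flow of $F_y$ and, above all, the invariance of $\cP(I)$, which is precisely what confines the Grönwall comparison to a region where $F_y$ is Lipschitz.
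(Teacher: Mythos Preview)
Your core computation --- showing $\nu(\cA_y\phi)=\cB_y\nu(\phi)$ via Fubini, i.e., that $\cB_y=\cA_y^\star\rvert_{\cM(I)}$ --- is exactly the paper's, and the paper essentially stops there: it rewrites the scalar ODE \eqref{eq:filteringdifferential} through this duality and declares \eqref{eq:filteringprocess}, both for the jump condition and for the drift. You go further and insist on reading \eqref{eq:filteringprocess} as a genuine Banach-space ODE in $(\cM(I),\norm{\cdot}_{TV})$, which is why you introduce the countable determining class (to merge the $\phi$-dependent null sets), the Lipschitz bound on $F_y$, the invariance of $\cP(I)$ under the flow, and the Gr\"onwall comparison. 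That extra work is correct, and its ingredients match precisely what the paper establishes \emph{after} this theorem (Theorem~\ref{th:ODEuniqueness}, Lemma~\ref{lemma:Bycontinuous}, Proposition~\ref{prop:Fylipschitz} in Appendix~\ref{app:Fvectorfield}); the paper simply does not invoke them here, and never addresses the null-set issue explicitly. So your argument is a more rigorous version of the same approach, front-loading analytic material the paper defers to the next section.
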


\begin{proof}
	Let us notice first that from (\ref{eq:filteringdifferential}) we have that for all $n \in \N_0$ and $\P$-a.s.
	\begin{equation*}
	\pi_{\tau_n}(\phi) = \int_I \phi(z) \, H_{Y_{\tau_n}}[\mu_n](\dd z), \quad \P\text{-a.s.}, \quad \phi \in \dB_b(I).
	\end{equation*}
	Since we also have that $\pi_{\tau_n}(\phi) = \int_I \phi(z) \, \pi_{\tau_n}(\dd z), \, \P$-a.s., we get
	\begin{equation*}
		\pi_{\tau_n} = H_{Y_{\tau_n}}[\mu_n], \quad \P\text{-a.s.}, \, n \in \N_0.
	\end{equation*}
	
	We need to prove that the filtering process satisfies for each $n \in \N_0$ the ODE
	$$\frac{\dd}{\dd t} \pi_t = \cB_{Y_t} \pi_t - \pi_t \, \cB_{Y_t}\pi_t(I), \quad t \in [\tau_n, \tau_{n+1}), \, \P\text{-a.s.}$$
	It suffices to show that for each fixed $y \in O$ the operator $\cB_y$ is the restriction to $\cM(I)$ of the adjoint $\cA_y^\star$ of the operator $\cA_y$ introduced in Theorem \ref{thm:filteringequation}. To see this, we have to recall that the dual space $\dB_b(I)^\star$ of $\dB_b(I)$ is isometrically isomorfic to the space $\mathrm{ba}(I)$ of bounded finitely additive regular measures defined on the algebra generated by open sets in $I$. Notice that $\cM(I) \subset \mathrm{ba}(I)$. 
	
	Denote by $\inprod{\phi}{\nu} \coloneqq \int_I \phi(x) \, \nu(\dd x)$ the duality pairing between $\phi \in \dB_b(I)$ and $\nu \in \dB_b(I)^\star$ and fix $n \in \N_0$. Then (\ref{eq:filteringdifferential}) can be written as
	\begin{equation*}
	\inprod{\phi}{\dot \pi_t} = \inprod{\cA_{Y_t} \phi}{\pi_t} - \inprod{\phi}{\pi_t} \inprod{\cA_{Y_t} \one}{\pi_t}, \quad t \in [\tau_n, \tau_{n+1}), \, \P\text{-a.s.}\,.
	\end{equation*}
	The claim follows if we are able to show that
	\begin{equation*}
	\inprod{\phi}{\dot \pi_t} = 
	\inprod{\phi}{\cA_{Y_t}^\star \pi_t - \pi_t \cA_{Y_t}^\star \pi_t(I)}, \quad t \in [\tau_n, \tau_{n+1}), \, \P\text{-a.s.}
	\end{equation*}
	
	This fact follows from a repeated application of the \mbox{Fubini-Tonelli} theorem in the following chain of equalities, holding for all $\nu \in \cM(I)$ and $\phi \in \dB_b(I)$.
	\begin{align*}
	&\inprod{\cA_y\phi}{\nu} = 
	\int_I \cL\phi(x) \, \nu(dx) - \int_I \int_I \ind_{h^{-1}(y)^c} \phi(z) \lambda(x, dz) \, \nu(dx) =\\
	&\int_I \phi(z) \biggl\{ \ind_{h^{-1}(y)}(z) \int_I \lambda(x,dz) \, \nu(dx) - \lambda(z) \, \nu(dz) \biggr\} =
	\inprod{\phi}{\cB_y \nu}.
	\end{align*}	
	So, clearly $\cB_y = \cA_y^\star \rvert_{\cM(I)}$.
\end{proof}
	
\section{The filtering process as a PDMP} \label{sec:filterchar}
In this Section we want to investigate the properties of the filtering process $\pi$. The core of this Section will be devoted to prove that this is a \emph{Piecewise Deterministic Markov Process}, or PDMP for short. This class of processes, whose study has been started by M.H.A.~Davis (see \citep{davis:markovmodels} or \citep{jacobsen:pointprocesstheory}), has gained a lot of attention in applications, since it provides a framework to describe a vast range of phenomena whose behavior does not fit any diffusive model.

The main feature of PDMPs is that their dynamic, as the name suggests, is deterministic in specific time intervals, given by the occurrence of random jumps. In each of these time windows the evolution of the process is governed by a \emph{flow}, determined by a \emph{vector field}, satisfying an ODE. The distribution of the time passing between two consecutive jump times is given by an exponential-like law. The position of the process after a jump, i.e., its post jump location, is provided by another specified probability measure. Notice that, in general, a PDMP may jump whenever hitting the boundary of its state space. However, this will not happen in our problem.

It is clear that, in our situation, the filtering process $\pi$ appears to be a PDMP. The main task is to identify the characteristic triple that uniquely determines a PDMP, composed by the flow, the distribution of sojourn times and the law of the post jump locations.

Let us start by studying the flow. We are concerned with well-posedness
of the initial value problem described by equation (\ref{eq:filteringprocess}) between two consecutive jump times. From now on we consider the set $\cM(I)$ endowed with the total variation norm, making it a Banach space.
We define, for each fixed $y \in O$, the set $\Delta_y$ as the family of probability measures on $(I, \cI)$ concentrated on $h^{-1}(y)$, \ie
\begin{equation}\label{eq:deltaydefinition}
	\Delta_y \coloneqq \{\nu \in \cP(I) \colon \nu\bigl(h^{-1}(y)^c\bigr) = 0\}, \quad y \in O.
\end{equation}
This is a closed subset of $\cM(I)$ since it can be written as the intersection of the closed sets $\cP(I)$ and $\cM_y(I)$, where $\cM_y(I) \coloneqq \{\mu \in \cM(I) \colon \mu\bigl(h^{-1}(y)\bigr) = 1\}$. In particular, $\cM_y(I)$ is closed since the functional $\mu \mapsto \mu\bigl(h^{-1}(y)\bigr)$ is continuous on $\cM(I)$ for all $y \in O$.
% $\cP(I)$ is closed since it is the intersection of $\cM_+(I)$ and $\cM_1(I) = \{\mu, \in \cM(I) \colon \mu(I) = 1\}$

Let us define, for each fixed $y \in O$ the vector field $F_y \colon \cM(I) \to \cM(I)$
\begin{equation}\label{eq:vectorfield}
 	F_y(\nu) \coloneqq \cB_y \nu - \nu \cB_y \nu(I), \quad \nu \in \cM(I), \, y \in O
\end{equation}
where $\cB_y$ is the operator defined in (\ref{eq:operatorBy}).
We already know that the solution to the following ODE
\begin{equation}\label{eq:filterODE}
\begin{dcases}
\frac{\dd}{\dd t} z_t = F_y(z_t), \quad t \geq 0 \\
z_0 = \rho, \quad \rho \in \Delta_y,
\end{dcases}
\end{equation}
exists for each fixed $y \in O$ (for instance, consider the fact that the filtering process $\pi$ satisfies it $\P$-a.s. in the time interval $[0, \tau_1)$ when $y = Y_0$).
Its uniqueness is established by the following Theorem. Its proof is provided in Appendix \ref{app:Fvectorfield}

\begin{theorem}\label{th:ODEuniqueness}
	Under Assumption \ref{hp:lambda}, for each fixed $y \in O$ the ODE (\ref{eq:filterODE}) admits a unique global solution $z \in \dC^1\bigl([0, +\infty); \Delta_y\bigr)$.
\end{theorem}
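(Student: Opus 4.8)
The plan is to establish local existence and uniqueness by a Picard–Lindelöf argument on the Banach space $\cM(I)$ with the total variation norm, and then to show that the unique local solution does not blow up and stays in $\Delta_y$, thereby extending it to a global solution. First I would verify that the vector field $F_y(\nu) = \cB_y\nu - \nu\,\cB_y\nu(I)$ is locally Lipschitz on $\cM(I)$. Under Assumption \ref{hp:lambda}, writing $\|\lambda\|_\infty \coloneqq \sup_{x} \lambda(x,I) < \infty$, the linear map $\nu \mapsto \cB_y\nu$ is bounded: indeed $\|\cB_y\nu\|_{TV} \le \int_I \lambda(x,I)\,\abs{\nu}(\dd x) + \int_I \lambda(z)\,\abs{\nu}(\dd z) \le 2\|\lambda\|_\infty \|\nu\|_{TV}$. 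The scalar $\cB_y\nu(I)$ is then a bounded linear functional of $\nu$, and the quadratic term $\nu\,\cB_y\nu(I)$ is a bounded bilinear expression; hence $F_y$ is Lipschitz on every ball of $\cM(I)$. This gives, by the standard Banach-space Picard iteration, a unique maximal solution $z \in \cC^1\bigl([0,T_{\max}); \cM(I)\bigr)$ with $z_0 = \rho \in \Delta_y$.

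Next I would show the solution stays in $\Delta_y$ and, in particular, remains bounded in total variation, which rules out finite-time blow-up and forces $T_{\max} = +\infty$. The key observations are two conserved/controlled quantities. For the total mass, note $\frac{\dd}{\dd t} z_t(I) = \cB_y z_t(I) - z_t(I)\,\cB_y z_t(I) = \cB_y z_t(I)\,(1 - z_t(I))$, which is a scalar ODE of the form $\dot m = g(t)(1-m)$ with $m(0)=1$; by uniqueness of scalar ODEs, $z_t(I) \equiv 1$ for all $t$. Having fixed the mass, one checks positivity: on the (closed) set $h^{-1}(y)$ the gain term $\ind_{h^{-1}(y)}(z)\int_I \lambda(x,\dd z)\,z_t(\dd x)$ is nonnegative, while the loss term is $-\lambda(z) z_t(\dd z) - z_t \cB_y z_t(I)$, i.e. a pointwise multiplicative sink of uniformly bounded rate; a Gronwall-type / integrating-factor comparison shows that if $\rho$ is a nonnegative measure concentrated on $h^{-1}(y)$, then $z_t$ stays nonnegative and concentrated on $h^{-1}(y)$ (the flow cannot create mass outside $h^{-1}(y)$ since the gain term is supported there and $z_t$ starts there). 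Combined with $z_t(I)=1$, this gives $z_t \in \Delta_y$, hence $\|z_t\|_{TV} = 1$ is bounded uniformly in $t$; by the standard continuation criterion the maximal solution is global, $T_{\max}=+\infty$.

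Finally, uniqueness of the \emph{global} solution in $\cC^1\bigl([0,+\infty);\Delta_y\bigr)$ is inherited from the local uniqueness: any two global solutions agree on a neighbourhood of $0$ by Picard–Lindelöf, and the set of times where they agree is open (local uniqueness again) and closed (continuity), hence all of $[0,+\infty)$. The main obstacle I anticipate is the invariance argument for $\Delta_y$, specifically showing the flow preserves nonnegativity and the concentration on $h^{-1}(y)$: the measure-valued nature means one should argue via the variation-of-constants representation for the linear part $\cB_y$ (treating $\nu \mapsto \lambda(z)\nu(\dd z)$ as a bounded multiplication operator whose semigroup $e^{-t\lambda(\cdot)}$ is positivity-preserving) plus the positivity of the jump-gain kernel, rather than by a naive sign argument; the normalization term $-\nu\,\cB_y\nu(I)$ is harmless here because it only rescales and we have already pinned the total mass to $1$. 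Alternatively, since Assumption \ref{hp:lambda} makes all rates bounded, one can bypass some of this by noting that the filtering process itself furnishes a solution and invoking a probabilistic identification, but the self-contained analytic route above is cleaner.
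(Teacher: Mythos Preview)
Your proposal is correct in outline but takes a genuinely different route from the paper. You proceed by standard Picard--Lindel\"of on $\cM(I)$, then argue invariance of $\Delta_y$ directly: first the scalar ODE for $z_t(I)$ pins the mass to $1$, then concentration on $h^{-1}(y)$ follows because the restriction $w_t \coloneqq z_t|_{h^{-1}(y)^c}$ satisfies a homogeneous linear equation with $w_0=0$, and finally positivity is recovered via a Duhamel representation using that the multiplication semigroup $e^{-t\lambda(\cdot)}$ and the gain operator $\nu \mapsto \ind_{h^{-1}(y)}(\cdot)\int_I \lambda(x,\cdot)\,\nu(\dd x)$ are both positivity-preserving. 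This is a legitimate and self-contained analytic argument; the only place to be careful (as you yourself flag) is the positivity step in infinite dimensions, which really does require the Dyson/variation-of-constants expansion rather than a naive sign argument.

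The paper instead invokes an abstract flow-invariance theorem for ODEs on closed subsets of Banach spaces (Martin, \emph{Differential equations on closed subsets of a Banach space}), verifying four hypotheses: continuity of $F_y$ on $\Delta_y$; a \emph{tangency} condition $\lim_{\epsilon \downarrow 0} \epsilon^{-1} \dist\bigl(\mu + \epsilon F_y(\mu),\,\Delta_y\bigr)=0$, checked by showing directly that $\mu + \epsilon F_y(\mu) \in \Delta_y$ for $\epsilon$ small; local boundedness of $F_y$; and a one-sided Lipschitz bound $\langle \mu-\nu, F_y(\mu)-F_y(\nu)\rangle_+ \le C\norm{\mu-\nu}_{TV}^2$ in the duality semi-inner product. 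This packages mass conservation, concentration, and positivity into the single tangency check (which is a short finite computation), and delegates the delicate invariance machinery to Martin's theorem. Your approach trades that black box for explicit semigroup/Gronwall arguments: it is more elementary in its ingredients but longer to execute, whereas the paper's is shorter once Martin's result is granted.
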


\begin{rem}\label{rem:flow}
	In what follows, we will denote the solution $z$ by $\varphi_{y, \rho}(\cdot)$, 
	to stress the dependence on $y \in O$ and $\rho \in \Delta_y$.
	By standard results, $(t, \rho) \mapsto \varphi_{y, \rho}(t)$ is continuous for each $y \in O$
	and it enjoys the flow property, \ie $\varphi_{y, \varphi_y(s, \rho)}(t) = \varphi_{y, \rho}(t+s)$, for $t, s \geq 0$.
	The function $y \mapsto \varphi_{y, \cdot}(\cdot)$ is called the \emph{flow} associated with the vector field $F_y$ on $\Delta_y$.
	To simplify the notation, it is convenient to define the set $\Delta_e = \bigcup_{y \in O} \Delta_y$, named the \emph{effective simplex} to preserve the terminology used in \citep{confortola:filt}.
	Notice that the union is disjoint, as is immediate to prove from the definition given in (\ref{eq:deltaydefinition}). In this way we can define a \emph{global flow} $\varphi$ on $\Delta_e$ setting $\varphi_\rho(t) = \varphi_{y, \rho}(t)$, if $\rho \in \Delta_y$.
	For all fixed $t \geq 0, \, \rho \mapsto \varphi_\rho(t)$ is a function mapping $\Delta_e$ into itself and leaving each set $\Delta_y$ invariant.
	Finally, we can associate to the global flow a \emph{global vector field} $F \colon \Delta_e \to \Delta_e$ defined as
	\begin{equation}\label{eq:globalvectorfield}
		F(\nu) \coloneqq F_y(\nu) = \cB_y \nu - \nu \, \cB_y \nu(I), \quad \nu \in \Delta_y.
	\end{equation}
	The effective simplex bears this name because of its relationship with the canonical simplex on euclidean spaces. In fact, if we consider the state spaces $I$ and $O$ of the signal and observed processes as finite sets (see \eg \citep{calvia:optcontrol, confortola:filt}), then the effective simplex is made of pairwise disjoint faces of the canonical simplex on $\R^{\abs{I}}$, where $\abs{I}$ denotes the cardinality of the set $I$. The shape of these faces (points, segments, triangles, tetrahedra, etc \dots) depends on the function $h$. The evolution of the filtering process takes place only on parts of the boundary of the canonical simplex.
\end{rem}

Before moving on to prove the characterization of the filtering process as a PDMP, let us precise that, as far as topology is concerned, the effective simplex will be regarded as a topological space under the relative topology inherited from the Banach space $\cM(I)$. In this way, we can also consider the effective simplex as a measurable space, endowing it with the Borel $\sigma$-algebra $\cB(\Delta_e)$.

In order to prove that the filtering process is a PDMP, it is convenient to put ourselves in a canonical setting for our filtering problem with respect to the unobserved process $X$. This construction will have a fundamental role in studying the optimal control problem.

Let us define $\Omega$ as the set
\begin{multline*}
\Omega = \{\omega = (i_0, t_1, i_1, t_2, i_2, \ldots) \colon i_0 \in I, i_n \in I, \\ t_n \in (0, +\infty], t_n \leq t_{n+1}, t_n < +\infty \Rightarrow t_n < t_{n+1}, n \in \N\}.
\end{multline*}
For each $n \in \N$ we introduce the following random variables 
\begin{align*}
T_0(\omega) &= 0, & T_n(\omega) &= t_n, & T_\infty(\omega) &= \lim_{n \to \infty} T_n(\omega), & \xi_0(\omega) &= i_0, & \xi_n(\omega) &= i_n,
\end{align*}
and we define the random measure on $\bigl((0, +\infty) \times I, \, \cB\bigl((0, +\infty)\bigr) \otimes \cI\bigr)$
\begin{equation*}
n(\omega, \dd t \, \dd z) = \sum_{n \in \N} \delta_{\bigl(T_n(\omega), \, \xi_n(\omega)\bigr)} (\dd t \, \dd z) \ind_{\{T_n < +\infty\}}(\omega), \quad \omega \in \Omega,
\end{equation*}
with associated natural filtration $\cN_t = \sigma\bigl(n\bigl((0, s] \times A \bigr)\bigr), \, 0 \leq s \leq t, \, A \in \cI\bigr)$.
Finally, let us specify the \mbox{$\sigma$-algebras}
\begin{align*}
\cX_0^\circ &= \sigma(\xi_0), & \cX_t^\circ &= \sigma(\cX_0^\circ \cup \cN_t), & \cX^\circ &= \sigma\Bigl(\bigcup_{t \geq 0} \cX_t\Bigr).
\end{align*}

The unobserved process $X$ is defined, for all $t \geq 0$, as
\begin{equation*}
X_t(\omega) =
\begin{cases}
\xi_n(\omega),	& t \in \bigl[T_n(\omega), T_{n+1}(\omega)\bigr), \, n \in \N_0, \, T_n(\omega) < +\infty \\
i_\infty,		& t \in \bigl[T_\infty(\omega), +\infty), \, T_\infty(\omega) < +\infty
\end{cases}
\end{equation*}
where $i_\infty \in I$ is an arbitrary state, that is irrelevant to specify.
Next, we define the observed process $Y$ and its natural filtration $(\cY_t^\circ)_{t \geq 0}$ as
\begin{align*}
Y_t(\omega) &= h(X_t(\omega)), \, t \geq 0, \, \omega \in \Omega; & \cY_t^\circ &= \sigma\bigl(Y_s,\, 0 \leq s \leq t\bigr), \, t \geq 0.
\end{align*} 
It is clear that we can equivalently describe this process (as is the case for $X$) via a MPP $(\eta_n, \tau_n)_{n \in \N}$ together with the initial condition $\eta_0 = h(\xi_0) = Y_0$.
Accordingly, the $\sigma$-algebras of the natural filtration of $Y$ are the smallest $\sigma$-algebras generated by the union of $\sigma(\eta_0)$ and the $\sigma$-algebras of the natural filtration of the MPP $(\eta_n, \tau_n)_{n \in \N}$.

\begin{rem}
	Notice that here we constructed the unobserved process $X$ starting from its MPP counterpart $(T_n, \xi_n)_{n \in \N}$, whereas in Section \ref{sec:filtering} we were given a pure jump Markov process and we associated its corresponding MPP. The two approaches are clearly equivalent.
\end{rem}

Next, for every $\mu \in \cP(I)$ let $\p_\mu$ be the unique probability measure on $(\Omega, \cX^\circ)$ such that $X$ is a $(\cX^\circ, \p_\mu)$-Markov process with state space $I$, initial law $\mu$ and generator $\cL$, defined in (\ref{eq:infinitesimalgen}). This means that for all $A \in \cI$, all $s, t \geq 0$ and all $f \in \dB_b(I)$ it holds
\begin{align*}
	\p_\mu(X_0 \in A) &= \mu(A), \quad \p_\mu\text{-a.s.}\\
	\e_\mu\bigl[f(X_{t+s}) \mid \cX_t^\circ\bigr] &= e^{s \cL}f(X_t), \quad \p_\mu\text{-a.s.}
\end{align*}
It follows from Assumption \ref{hp:lambda} and by standard arguments that the point process $n$ is $\p_\mu$-a.s. \mbox{non-explosive}, \ie that $T_\infty = +\infty$, \mbox{$\p_\mu$-a.s.}

To conclude the previous construction, for a fixed probability measure $\mu$ on $I$ we define
\begin{itemize}
	\item $\cX^\mu$ the \mbox{$\p_\mu$-completion} of $\cX^\circ$
	($\p_\mu$ is extended to $\cX^\mu$ in the natural way).
	\item $\cZ^\mu$ the family of elements of $\cX^\mu$
	with zero $\p_\mu$~-~probability.
	\item $\cY_t^\mu = \sigma(\cY_t^\circ, \cZ^\mu)$, for $t \geq 0$.
\end{itemize}
$(\cY_t^\mu)_{t \geq 0}$ is called the \emph{natural completed filtration} of $Y$.

Next, for each $\mu \in \cP(I)$ and $\omega \in \Omega$ we define $\pi^\mu(\omega)$ to be the unique solution to
\begin{equation}\label{eq:filteringproctraj}
\begin{dcases}
	\frac{\dd}{\dd t} \pi^\mu_t(\omega) = F_{Y_t(\omega)}\bigl(\pi^\mu_t(\omega)\bigr), \quad t \in [\tau_n(\omega), \tau_{n+1}(\omega)), \, n \in \N_0 \\
	\pi_0^\mu(\omega) = H_{Y_0(\omega)}[\mu] \\
	\pi_{\tau_n(\omega)}^\mu(\omega) = H_{Y_{\tau_n(\omega)}}\bigl[\Lambda\bigl(\pi_{\tau_n^-}^\mu(\omega)\bigr)\bigr], \, n \in \N
\end{dcases}
\end{equation}
where $\Lambda \colon \Delta_e \to \cM_+(I)$ is the function given by
\begin{equation}\label{eq:operatorLambda}
	\Lambda(\nu) \coloneqq \ind_{h^{-1}(y)^c}(z) \int_I \lambda(x, \dd z) \, \nu(\dd x), \quad \nu \in \Delta_y
\end{equation}
and the quantity $\pi_{\tau_n^-(\omega)}(\omega)$ is defined as
$$\pi_{\tau_n^-(\omega)}(\omega) \coloneqq \lim_{t \to \tau_n(\omega)^-} \pi_t(\omega), \quad \text{on } \{\omega \in \Omega \colon \tau_n(\omega) < +\infty\}.$$

Thanks to Theorem \ref{th:ODEuniqueness}, Equation (\ref{eq:filteringproctraj}) uniquely determines a $(\cY_t^\circ)_{t \geq 0}$-adapted c\'adl\'ag and $\Delta_e$-valued process. By Theorem \ref{thm:filteringproc}, we deduce that it is a modification of the filtering process, \ie for all $t \geq 0$ and all $A \in \cI$ it holds
\begin{equation*}
	\pi^\mu_t(A) = \p_\mu(X_t \in A \mid \cY^\mu_t), \quad \p_\mu\text{-a.s.}
\end{equation*}
Since the filtering process is $(\cY_t^\mu)_{t \geq 0}$-adapted and the filtration $(\cY_t^\mu)_{t \geq 0}$ is right-continuous, we can choose (and we will, whenever needed) a $(\cY_t^\mu)_{t \geq 0}$-progressive version of the filtering process itself.

We are now ready to state the Markov property for the filtering process $\pi^\mu$ with respect to the natural completed filtration of the observed process $Y$, for each fixed $\mu \in \cP(I)$. This is the content of Proposition \ref{prop:filtermarkov}, preceded by the useful technical Lemma \ref{lemma:prediction}. We omit their proof, being slight generalizations of \citep[Proposition 3.3 and Proposition 3.4]{confortola:filt}.

\begin{lemma}\label{lemma:prediction}
	For fixed $t \geq 0$, let us denote by $X_t^\infty$ the future trajectory of the process $X$ starting at time $t$. For all $\mu \in \cP(I), \, t \geq 0$ and $C \in \cX^\mu$, it holds
	\begin{equation*}
		\p_\mu(X_t^\infty \in C \mid \cY^\mu_t) = \p_{\pi_t^\mu}(C), \quad \p_\mu\text{-a.s.}
	\end{equation*}
\end{lemma}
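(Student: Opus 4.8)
The goal is to prove Lemma \ref{lemma:prediction}, which asserts that the conditional law of the future trajectory $X_t^\infty$ of the unobserved process given $\cY_t^\mu$ coincides $\p_\mu$-a.s.\ with $\p_{\pi_t^\mu}$, the law of the process started afresh from the random initial distribution $\pi_t^\mu$. The plan is to reduce everything to a combination of (i) the Markov property of $X$ under $\p_\mu$ with respect to its own filtration $\bbX$, (ii) the defining property of the filtering process, and (iii) a suitable measurability/monotone-class argument, exactly along the lines of \citep[Proposition 3.3]{confortola:filt}.

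First I would reduce to cylinder events: since $\cX^\mu$ is generated (up to null sets) by sets of the form $C = \{X_{t+s_1} \in A_1, \dots, X_{t+s_k} \in A_k\}$ with $0 \le s_1 < \dots < s_k$ and $A_i \in \cI$, by a monotone-class argument it suffices to prove the identity for such $C$, and in fact, composing with the Markov semigroup, it suffices to treat $C = \{X_{t+s}^\infty \in C'\}$ inductively or, more cleanly, to show that for every bounded measurable $g$ on the path space one has $\e_\mu[g(X_t^\infty)\mid\cY_t^\mu] = \e_{\pi_t^\mu}[g]$, where the right-hand side is the expectation under the law $\p_{\pi_t^\mu}$ of the canonical process started from $\pi_t^\mu$. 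By the Markov property of $X$ under $\p_\mu$, $\e_\mu[g(X_t^\infty)\mid\cX_t^\mu]$ is a measurable function of $X_t$ alone, say $G(X_t)$, where $G(x) = \e_x[g]$ is the expectation under the law of the process started deterministically at $x$. Since $\cY_t^\mu \subset \cX_t^\mu$, the tower property gives $\e_\mu[g(X_t^\infty)\mid\cY_t^\mu] = \e_\mu[G(X_t)\mid\cY_t^\mu]$.

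The key step is then to identify $\e_\mu[G(X_t)\mid\cY_t^\mu]$ with $\e_{\pi_t^\mu}[g]$. If $G$ were bounded and measurable this would be immediate from the defining relation \eqref{eq:filtproc} of the filtering process: $\e_\mu[G(X_t)\mid\cY_t^\mu] = \pi_t^\mu(G) = \int_I G(x)\,\pi_t^\mu(\dd x)$, and then one only needs the elementary disintegration identity $\int_I \e_x[g]\,\pi_t^\mu(\dd x) = \e_{\pi_t^\mu}[g]$, which expresses that starting from the mixture $\pi_t^\mu$ of initial points is the same as mixing the laws started from each point — a consequence of the construction of $\p_\nu$ as the law with initial distribution $\nu$ and generator $\cL$, so that $\nu \mapsto \p_\nu(C)$ is affine and given by $\int_I \p_x(C)\,\nu(\dd x)$ for each fixed cylinder $C$. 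The one subtlety is measurability and boundedness of $x \mapsto G(x) = \e_x[g]$: for $g$ a bounded cylinder functional this follows from the measurable dependence of the transition kernels of $X$ on the starting point (and Assumption \ref{hp:lambda}, which guarantees non-explosion and hence that the canonical process is genuinely defined for all times under each $\p_x$); a monotone-class argument upgrades this from indicator cylinders to all bounded measurable $g$.

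The main obstacle, and the place where care is genuinely needed, is the handling of the completed filtration $\cY_t^\mu$ versus the raw filtration $\cY_t^\circ$, together with the $\p_\mu$-a.s.\ nature of all the identities: the filtering process $\pi_t^\mu$ is only a version, the Markov property of $X$ holds $\p_\mu$-a.s., and the set of $\omega$ on which \eqref{eq:filtproc} fails depends a priori on the test function. Since $\cX^\mu$ (hence the relevant cylinder $\sigma$-algebra) is countably generated up to null sets, one fixes a countable generating $\pi$-system, obtains the identity simultaneously off a single $\p_\mu$-null set for all generators, and then extends by Dynkin's $\pi$–$\lambda$ theorem to all of $\cX^\mu$; the passage from $\cY_t^\circ$ to $\cY_t^\mu$ is harmless because augmenting by $\p_\mu$-null sets does not change conditional expectations. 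Apart from this bookkeeping, the argument is exactly the one used for finite state space in \citep{confortola:filt}, and no new idea beyond the measurable disintegration $\p_\nu(\cdot) = \int_I \p_x(\cdot)\,\nu(\dd x)$ is required; this is why the authors state that the proof is a slight generalization and omit the details.
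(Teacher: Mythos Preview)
Your proposal is correct and follows precisely the approach that the paper (by deferring to \citep[Proposition 3.3]{confortola:filt}) has in mind: combine the Markov property of $X$ under $\p_\mu$ with respect to $\bbX$, the tower property using $\cY_t^\mu \subset \cX_t^\mu$, the defining relation $\e_\mu[G(X_t)\mid\cY_t^\mu] = \pi_t^\mu(G)$ of the filtering process, and the disintegration $\p_\nu(\cdot) = \int_I \p_x(\cdot)\,\nu(\dd x)$, closing with a monotone-class argument over a countable generating $\pi$-system to handle the $\p_\mu$-a.s.\ exceptional sets uniformly. Since the paper omits the proof entirely as a slight generalization of the finite-state case, there is nothing further to compare.
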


\begin{proposition}\label{prop:filtermarkov}
	For fixed $t \geq 0$ consider the transition kernel $p_t$ from $\bigl(\Delta_e, \, \cB(\Delta_e)\bigr)$ into itself given by
	$$p_t(\nu,  D) \coloneqq \p_\nu(\pi_t^\nu \in D), \quad \nu \in \Delta_e, \, D \in \cB(\Delta_e).$$
	Then $(p_t)_{t \geq 0}$ is a Markov transition function on $\bigl(\Delta_e, \, \cB(\Delta_e)\bigr)$. Moreover, for every fixed $\mu \in \cP(I)$, the process $\pi^\mu$ in the probability space $(\Omega, \cX^\mu, \p_\mu)$ is a $\Delta_e$-valued Markov process with respect to $(\cY_t^\mu)_{t \geq 0}$, having transition function $(p_t)_{t \geq 0}$. Otherwise said, the following equality holds, for all $s, t \geq 0$ and all $D \in \cB(\Delta_e)$
	\begin{equation*}
		\p_\mu(\pi^\mu_{t+s} \in D \mid \cY_t^\mu) = p_s(\pi^\mu_t, D), \quad \p_\mu\text{-a.s.}
	\end{equation*}
\end{proposition}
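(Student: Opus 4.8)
The plan is to follow the classical route for the Markov property of a nonlinear filter, adapted to the piecewise-deterministic description (\ref{eq:filteringproctraj}), mirroring \citep[Prop.~3.3--3.4]{confortola:filt}: exhibit the trajectory of $\pi^\mu$ after time $t$ as a deterministic measurable functional of the current value $\pi^\mu_t$ and of the future observation path $(Y_r)_{r\ge t}$, write the latter as a functional of the future signal path $X^\infty_t$, and then combine a substitution (freezing) argument with Lemma~\ref{lemma:prediction}, which identifies the $\cY^\mu_t$-conditional law of $X^\infty_t$ under $\p_\mu$ as $\p_{\pi^\mu_t}$. The only point of departure from the finite-state case is that the flow now lives in the infinite-dimensional set $\Delta_e\subset\cM(I)$, but all the relevant regularity has already been secured.

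Fix $s,t\ge0$. The first step, which I expect to be the main technical burden, is the functional representation. By Theorem~\ref{th:ODEuniqueness} and Remark~\ref{rem:flow}, on each interval where $Y$ is constant the filter moves along the global flow $\varphi$, which is continuous in its initial datum and satisfies the cocycle identity $\varphi_{\varphi_\rho(r)}(r')=\varphi_\rho(r+r')$; in particular $\pi^\mu_{t+r}=\varphi_{\pi^\mu_t}(r)$ for $r$ up to the first jump of $Y$ after $t$. At a jump time $\tau_n>t$ of $Y$, the post-jump value $\pi^\mu_{\tau_n}=H_{Y_{\tau_n}}\!\bigl[\Lambda(\pi^\mu_{\tau_n^-})\bigr]$ is, by (\ref{eq:operatorLambda}) and Definition~\ref{def:operatorH}, a measurable function of the left limit $\pi^\mu_{\tau_n^-}$ and of $Y_{\tau_n^-},Y_{\tau_n}$ alone. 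Since $(\tau_n,\eta_n)$ is $\p_\mu$-a.s.\ non-explosive, only finitely many jumps of $Y$ fall in $(t,t+s]$, and their times and marks are measurable functionals of $(Y_r)_{r\in[t,t+s]}$; iterating flow-then-jump over these jumps produces a jointly measurable map $G_s$, defined on $\Delta_e$ times the space of observation paths on $[0,s]$, such that
\[
\pi^\mu_{t+s}=G_s\bigl(\pi^\mu_t,\,(Y_{t+r})_{r\in[0,s]}\bigr),\qquad\p_\mu\text{-a.s.}
\]
Making this representation precise and measurable, simultaneously for all $s$, is the crux; everything else rests on it.

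Now fix $D\in\cB(\Delta_e)$ and, for $\rho\in\Delta_e$ and a signal trajectory $x^\infty$, put $\Psi(\rho,x^\infty):=\ind_D\bigl(G_s(\rho,(h(x_r))_{r\in[0,s]})\bigr)$, so that $\{\pi^\mu_{t+s}\in D\}=\{\Psi(\pi^\mu_t,X^\infty_t)=1\}$ and, for each fixed $\rho$, $\{x^\infty\colon\Psi(\rho,x^\infty)=1\}\in\cX^\mu$. Since $\pi^\mu_t$ is $\cY^\mu_t$-measurable, the substitution lemma gives
\[
\p_\mu\bigl(\pi^\mu_{t+s}\in D\mid\cY^\mu_t\bigr)=\kappa(\rho,\cdot)\big|_{\rho=\pi^\mu_t},\qquad\kappa(\rho,\cdot):=\p_\mu\bigl(\Psi(\rho,X^\infty_t)=1\mid\cY^\mu_t\bigr),
\]
and, for each fixed $\rho$, Lemma~\ref{lemma:prediction} yields $\kappa(\rho,\cdot)=\p_{\pi^\mu_t}\bigl(\Psi(\rho,X^\infty)=1\bigr)$, $\p_\mu$-a.s. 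Substituting $\rho=\pi^\mu_t$ leaves $\p_{\pi^\mu_t}\bigl(\Psi(\pi^\mu_t,X^\infty)=1\bigr)$. Finally, for $\nu\in\Delta_e$ the measure $\nu$ is concentrated on a single level set $h^{-1}(y_0)$, so $Y_0=y_0$ and $H_{Y_0}[\nu]=\nu$, $\p_\nu$-a.s., whence $\pi^\nu_0=\nu$ and $\Psi(\nu,X^\infty)=\ind_D(\pi^\nu_s)$ under $\p_\nu$; consequently $\p_\nu\bigl(\Psi(\nu,X^\infty)=1\bigr)=\p_\nu(\pi^\nu_s\in D)=p_s(\nu,D)$. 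Combining the displays gives $\p_\mu(\pi^\mu_{t+s}\in D\mid\cY^\mu_t)=p_s(\pi^\mu_t,D)$, $\p_\mu$-a.s., which is the asserted Markov property.

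It remains to check that $(p_t)_{t\ge0}$ is a Markov transition function on $\bigl(\Delta_e,\cB(\Delta_e)\bigr)$. Joint measurability of $(\nu,D)\mapsto p_t(\nu,D)$ follows from the measurable dependence on $\nu$ of both $\p_\nu$ and the process $\pi^\nu$ (continuity of the flow in its initial datum, together with measurability of $H$, $\Lambda$ and of $\nu\mapsto\p_\nu$); $p_0(\nu,\cdot)=\delta_\nu$ since $\pi^\nu_0=\nu$, $\p_\nu$-a.s.; and the Chapman--Kolmogorov identity
\[
p_{t+s}(\nu,D)=\int_{\Delta_e}p_s(\rho,D)\,p_t(\nu,\dd\rho)
\]
is obtained by taking $\e_\nu$-expectations in the Markov property applied under $\p_\nu$ and using the tower property for $\cY^\nu_t$.
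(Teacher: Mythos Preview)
Your proposal is correct and follows exactly the approach the paper indicates: the paper omits the proof entirely, stating only that it is a ``slight generalization of \citep[Proposition~3.3 and Proposition~3.4]{confortola:filt}'', and your argument is precisely that generalization---the functional representation of $\pi^\mu_{t+s}$ in terms of $\pi^\mu_t$ and the future observation path, combined with the freezing lemma and Lemma~\ref{lemma:prediction}. Your sketch in fact supplies more detail than the paper itself, including the verification that $\pi^\nu_0=\nu$ under $\p_\nu$ for $\nu\in\Delta_e$ and the derivation of Chapman--Kolmogorov from the Markov property.
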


To prove that the filtering process $\pi^\mu$ is a PDMP we need to define its \emph{characteristic triple}, \ie the following quantities.
\begin{enumerate}
	\item The \emph{vector field} $F$ is the global vector field defined in Equation (\ref{eq:globalvectorfield}), \ie
	\begin{equation}\label{eq:PDPvectorfield}
	F(\nu) \coloneqq \cB_y \nu - \nu \, \cB_y \nu(I), \quad \nu \in \Delta_y.
	\end{equation}
	The global flow $\varphi$ previously introduced is associated to this vector field (see Remark \ref{rem:flow}). 
	\item The \emph{jump rate function} $r \colon \Delta_e \to [0, +\infty)$ is defined by
	\begin{equation}\label{eq:PDPjumprate}
	r(\nu) \coloneqq - \cB_y \nu(I) = \int_I \lambda\bigl(x, h^{-1}(y)^c\bigr) \, \nu(\dd x), \quad \nu \in \Delta_y.
	\end{equation}
	\item The \emph{transition probability} $R$ from $\bigl(\Delta_e, \cB(\Delta_e)\bigr)$ into itself is defined by
	\begin{equation}\label{eq:PDPtransition}
	R(\nu, D) \coloneqq \int_O \ind_D\bigl(H_\upsilon[\Lambda(\nu)]\bigr) \, \rho(\nu, \dd \upsilon), \quad \nu \in \Delta_y, \, D \in \cB(\Delta_e)
	\end{equation}
	where $\rho$ is a transition probability from $\bigl(\Delta_e, \cB(\Delta_e)\bigr)$ into $(O, \cO)$ defined for all $\nu \in \Delta_y$ and all $B \in \cO$ as
	\begin{equation}\label{eq:rhotransition}
		\rho(\nu, B) \coloneqq 
		\begin{dcases}
			\frac{1}{r(\nu)} \int_I \lambda\bigl(x, h^{-1}(B \setminus \{y\})\bigr) \, \nu(\dd x), &\text{if } r(\nu) > 0 \\
			q_y(B), &\text{if } r(\nu) = 0 
		\end{dcases}
	\end{equation}
	where $(q_y)_{y \in O}$ is a family of probability measures, each concentrated on the level set $h^{-1}(y), \, y \in O$, whose exact values are irrelevant.
	
	Since for any given $\nu \in \Delta_y$ the probability $\rho(\nu, \cdot)$ is concentrated on the set $O \setminus \{y\}$, the probability $R(\nu, \cdot)$ is concentrated on $\Delta_e \setminus \Delta_y$, as we expected to be, given the structure of the filtering process.
\end{enumerate}

\begin{rem}\label{rem:rpositive}
	Note that if $r(\nu) > 0$ then $r(\nu) \, \rho(\nu, \dd y) = \Lambda(\rho) \circ h^{-1}(\dd y)$.
\end{rem}

The law of the observed process $Y$ can be expressed via the filtering process itself. This fact will be used to characterize the filtering process as a PDMP. It is clear that since the process $Y$ is piecewise constant, its law is uniquely determined by the finite dimensional distributions of the process $\{Y_0, \tau_1, Y_{\tau_1}, \dots\}$. These in turn are completely characterized by the law of $Y_0$, which is obvious, and by the following distributions
\begin{align*}
	&\p_\mu(\tau_{n+1} - \tau_n > t, \, \tau_n < +\infty \mid \cY_{\tau_n}^\mu), & &t \geq 0, \, n \in \N_0 \\
	&\p_\mu(Y_{\tau_{n+1}} \in B, \, \tau_{n+1} < +\infty \mid \cY_{\tau_n}^\mu), & &B \in \cO, \, n \in \N_0.
\end{align*}

\begin{proposition}\label{prop:lawY}
	For all fixed $\mu \in \cP(I)$ the distributions of the sojourn times and the post jump locations of the observed process $Y$ are given by the following equalities, holding $\p_\mu$-a.s. for all $t \geq 0, \, B \in \cO$ and all $n \in \N_0$
	\begin{align}
		\p_\mu(\tau_{n+1} - \tau_n > t, \, \tau_n < +\infty \mid \cY_{\tau_n}^\mu) &= \exp\biggl\{-\int_0^t r\bigl(\varphi_{\pi_{\tau_n}^\mu}(s)\bigr) \, \dd s\biggr\} \ind_{\tau_n < +\infty} \label{eq:Ysojourntimes}, \\
		\p_\mu(Y_{\tau_{n+1}} \in B, \, \tau_{n+1} < +\infty \mid \cY_{\tau_{n+1}^-}^\mu) &= \rho\bigl(\varphi_{\pi_{\tau_n}^\mu}(\tau_{n+1}^- - \tau_n), B\bigr) \ind_{\tau_{n+1} < +\infty}. \label{eq:Ypostjump}
	\end{align}
\end{proposition}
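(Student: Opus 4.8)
The plan is to read both laws off the $\bbY$-compensator of the counting measure $m$ of the observed process, which was computed in Lemma~\ref{lemma:dualpredproj}, combined with the flow description of the filter established in Section~\ref{sec:filterchar}. First I would identify this compensator along the $n$-th inter-jump interval. By the canonical construction~(\ref{eq:filteringproctraj}) the process $\pi^\mu$ solves $\dot\pi^\mu_t = F_{Y_t}(\pi^\mu_t)$ on $[\tau_n,\tau_{n+1})$ with the prescribed post-jump datum, and since $Y_t \equiv Y_{\tau_n}$ on that interval, uniqueness of the flow (Theorem~\ref{th:ODEuniqueness}) forces $\pi^\mu_{t-} = \varphi_{\pi^\mu_{\tau_n}}(t-\tau_n)$ for all $t \in (\tau_n,\tau_{n+1}]$ (with the usual left-limit convention at $\tau_{n+1}$). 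As $\pi^\mu_{t-}$ is concentrated on $h^{-1}(Y_{t-})$, substituting this into the formula for $\hat\mu_t$ from Lemma~\ref{lemma:dualpredproj} and comparing with~(\ref{eq:PDPjumprate}) and~(\ref{eq:rhotransition}) gives, on $(\tau_n,\tau_{n+1}]$,
\[
\hat\mu_t(O) = r\bigl(\varphi_{\pi^\mu_{\tau_n}}(t-\tau_n)\bigr), \qquad \hat\mu_t(\dd y) = \Lambda\bigl(\varphi_{\pi^\mu_{\tau_n}}(t-\tau_n)\bigr)\circ h^{-1}(\dd y),
\]
the latter coinciding with $r(\,\cdot\,)\,\rho(\,\cdot\,,\dd y)$ wherever $r(\,\cdot\,)>0$, by Remark~\ref{rem:rpositive}.

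The second step is standard marked point process theory: for the non-explosive MPP $(\tau_n,\eta_n)_{n}$ (non-explosiveness by Assumption~\ref{hp:lambda}), whose $\bbY$-compensator is absolutely continuous in time, the compensator determines the regular conditional distribution of the next pair $(\tau_{n+1},\eta_{n+1})$ given $\cY^\mu_{\tau_n}$, and on $\{\tau_n<\infty\}$ this distribution has density
\[
\p_\mu\bigl(\tau_{n+1}-\tau_n \in \dd s,\ Y_{\tau_{n+1}} \in \dd y \mid \cY^\mu_{\tau_n}\bigr) = \exp\Bigl\{-\!\int_0^s \hat\mu_{\tau_n+u}(O)\,\dd u\Bigr\}\,\hat\mu_{\tau_n+s}(\dd y)\,\dd s,
\]
with $\hat\mu$ evaluated along the $n$-th interval as above (see e.g.\ \citep{jacod:mpp, brandt:MPP}). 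Integrating out $y$ and plugging in the expression for $\hat\mu_t(O)$ yields~(\ref{eq:Ysojourntimes}) after multiplying by $\ind_{\tau_n<\infty}$. For~(\ref{eq:Ypostjump}) I would use that, $Y$ being piecewise constant, $\cY^\mu_{\tau_{n+1}^-}$ is generated (modulo null sets) by $\cY^\mu_{\tau_n}$ and $\tau_{n+1}$, so conditioning on $\cY^\mu_{\tau_{n+1}^-}$ amounts to conditioning on $\cY^\mu_{\tau_n}$ and fixing the value of $\tau_{n+1}$; disintegrating the joint density above with respect to the sojourn time then gives $\hat\mu_{\tau_{n+1}}(\dd y)/\hat\mu_{\tau_{n+1}}(O) = \rho\bigl(\varphi_{\pi^\mu_{\tau_n}}(\tau_{n+1}^- - \tau_n),\dd y\bigr)$ on $\{\tau_{n+1}<\infty\}$. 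Here one notes that $\p_\mu$-a.s.\ on $\{\tau_{n+1}<\infty\}$ one has $r(\varphi_{\pi^\mu_{\tau_n}}(\tau_{n+1}^- - \tau_n))>0$ --- otherwise the sojourn-time density vanishes there and no jump can occur --- so the first branch of~(\ref{eq:rhotransition}) is the relevant one; on the residual $\p_\mu$-null set the convention $\rho(\,\cdot\,)=q_{Y_{\tau_n}}$ makes the identity hold trivially.

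The main obstacle I anticipate is the rigorous justification of the passage from compensator to conditional laws of successive jumps, in particular the handling of left-limits at $\tau_{n+1}$ and the measurable dependence on $\omega$: one must check that, on $\{\tau_{n+1}\geq s\}$, the left-limit $\pi^\mu_{s-}$ agrees, in a $\cY^\mu_{\tau_n}$-measurable way, with $\varphi_{\pi^\mu_{\tau_n}}(s-\tau_n)$, which is precisely what legitimizes replacing the $\bbY$-predictable integrand $\hat\mu_{\bullet}(O)$ inside the exponential by the map $u\mapsto r(\varphi_{\pi^\mu_{\tau_n}}(u))$, deterministic once $\cY^\mu_{\tau_n}$ is fixed. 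Once this is in place, the remainder is substitution of the formulas from Lemma~\ref{lemma:dualpredproj} and Section~\ref{sec:filterchar} and bookkeeping of the events $\{\tau_n=\infty\}$, on which both assertions are vacuous; the Markov property of $\pi^\mu$ (Proposition~\ref{prop:filtermarkov}) and the prediction Lemma~\ref{lemma:prediction} are not strictly needed here, although they yield the same conclusion by an alternative route (reduce to computing the law of $(\tau_1,Y_{\tau_1})$ under $\p_\nu$ with $\nu=\pi^\mu_{\tau_n}$, then exploit that the total mass of the unnormalized filter equals the conditional survival probability).
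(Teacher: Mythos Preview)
Your proposal is correct and follows essentially the same route as the paper: both identify the $\bbY$-compensator $\hat\mu_t$ along the $n$-th inter-jump interval via the flow representation $\pi^\mu_{t-}=\varphi_{\pi^\mu_{\tau_n}}(t-\tau_n)$, then extract the conditional laws of $(\tau_{n+1}-\tau_n,\eta_{n+1})$ from standard MPP theory. The only difference is in execution: where you invoke the general density formula for the next jump pair directly (citing \citep{jacod:mpp,brandt:MPP}), the paper proceeds more from first principles---it writes $\ind_{\tau_{n+1}\le T}\ind_{Y_{\tau_{n+1}}\in B}$ as a stopped increment of $m$, applies optional sampling to the compensated process, derives an integral equation for $g^n(t)=\p_\mu(\tau_{n+1}-\tau_n>t\mid\cY^\mu_{\tau_n})$, and solves the resulting ODE via \citep[Ch.~VIII, Th.~T7]{bremaud:pp}; for the post-jump law it invokes \citep[Ch.~VIII, T7, T12, T16]{bremaud:pp} exactly as you do for the $r>0$ issue.
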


\begin{proof}
	Let us fix $n \in \N_0$.
	To start, we will look for an expression for the joint distribution of the jump times and post jump locations of the process $Y$, \ie for all $T \geq 0$ and all $B \in \cO$ the quantity
	\begin{equation*}
		\p_\mu(\tau_{n+1} \leq T, \, Y_{\tau_{n+1}} \in B \mid \cY_{\tau_n}^\mu), \quad \text{on } \{\tau_n < +\infty\}.
	\end{equation*}
	Notice, first, that for all fixed $T \geq 0$ and $B \in \cO$ we can write 
	$$Z_T(B) \coloneqq \ind_{\tau_{n+1} \leq T} \ind_{Y_{\tau_{n+1}} \in B} = m\bigl((0, T \land \tau_{n+1}] \times B\bigr) - m\bigl((0, T \land \tau_n] \times B\bigr)$$ where $m$ is the random counting measure associated to $Y$ and defined in (\ref{eq:mMPP}). To see this, it suffices to observe that
	\begin{equation*}
		m\bigl((0, T \land \tau_n] \times B\bigr) = \sum_{k = 1}^n \ind_{\tau_k \leq T} \ind_{Y_{\tau_k} \in B}.
	\end{equation*}
	Clearly, $(Z_T(B))_{T \geq 0}$ is a $(\cY_T^\mu)_{T \geq 0}$-adapted point process and, thanks to Lemma \ref{lemma:dualpredproj}, a straightforward computation shows that its $(\cY_T^\mu)_{T \geq 0}$-compensator is given by
	\begin{align*}
		\zeta_T(B) &\coloneqq \int_{T \land \tau_n}^{T \land \tau_{n+1}} \int_I \lambda\bigl(x, h^{-1}(B \setminus \{Y_{s-}\})\bigr) \, \pi_{s-}^\mu(\dd x) \, \dd s\\
		&= \int_0^T \ind_{\tau_n \leq s < \tau_{n+1}} \int_I \lambda\bigl(x, h^{-1}(B \setminus \{Y_s\})\bigr) \, \pi_s^\mu(\dd x) \, \dd s.
	\end{align*}
	Moreover, since for all $k \in \N$ the stopped process 
	$$\biggl(m\bigl((0, T \land \tau_k] \times B\bigr) - \int_{(0, T \land \tau_k]} \int_I \lambda\bigl(x, h^{-1}(B \setminus \{Y_{s-}\})\bigr) \, \pi_{s-}^\mu(\dd x) \, \dd s\biggr)_{T \geq 0}$$
	is a uniformly integrable $(\cY_T^\mu)_{T \geq 0}$-martingale (cfr. \citep[(2.6)]{jacod:mpp}), the compensated process $(Z_T(B) - \zeta_T(B))_{T \geq 0}$ is a uniformly integrable $(\cY_T^\mu)_{T \geq 0}$-martingale. Hence by applying Doob's optional sampling theorem we get that for all $T \geq 0$
	\begin{equation*}
		\e_\mu[Z_T(B) \mid \cY_{\tau_n}^\mu] = \e_\mu[\zeta_T \mid \cY_{\tau_n}^\mu], \quad \p_\mu\text{-a.s.}
	\end{equation*}
	or otherwise written
	\begin{multline*}
		\p_\mu(\tau_{n+1} \leq T, \, Y_{\tau_{n+1}} \in B \mid \cY_{\tau_n}^\mu) = \\
		\e_\mu\biggl[\int_0^T \ind_{\tau_n \leq s < \tau_{n+1}} \int_I \lambda\bigl(x, h^{-1}(B \setminus \{Y_s\})\bigr) \, \pi_s^\mu(\dd x) \, \dd s \biggm| \cY_{\tau_n}^\mu\biggr], \quad \p_\mu\text{-a.s.}
	\end{multline*}
	Noting that for $\tau_n \leq s < \tau_{n+1}$ we have that $Y_s = Y_{\tau_n}$ and $\pi_s = \varphi_{\pi_{\tau_n}^\mu}(s-\tau_n)$ we can write the previous equation as
	\begin{multline}\label{eq:jumpsojournlaw}
		\p_\mu(\tau_{n+1} \leq T, \, Y_{\tau_{n+1}} \in B \mid \cY_{\tau_n}^\mu) = \\
		\e_\mu\biggl[\int_0^T \int_I \ind_{\tau_n \leq s < \tau_{n+1}} \lambda\bigl(x, h^{-1}(B \setminus \{Y_{\tau_n}\})\bigr) \, \varphi_{\pi_{\tau_n}^\mu}(s-\tau_n; \,\dd x) \, \dd s \biggm| \cY_{\tau_n}^\mu\biggr], \, \p_\mu\text{-a.s.}
	\end{multline}
	
	Now let $t \geq 0$ be fixed. Since the random variable $t + \tau_n$ is $Y_{\tau_n}^\mu$-measurable, we immediately get from (\ref{eq:jumpsojournlaw})
	\begin{multline}\label{eq:sojournjumplaw}
		\p_\mu(\tau_{n+1} -\tau_n \leq t, \, Y_{\tau_{n+1}} \in B \mid \cY_{\tau_n}^\mu) = \\
		\e_\mu\biggl[\int_0^{t+\tau_n} \int_I \ind_{\tau_n \leq s < \tau_{n+1}} \lambda\bigl(x, h^{-1}(B \setminus \{Y_{\tau_n}\})\bigr) \, \varphi_{\pi_{\tau_n}^\mu}(s-\tau_n; \,\dd x) \, \dd s \bigm| \cY_{\tau_n}^\mu\biggr] = \\
		\e_\mu\biggl[\int_0^t \int_I \ind_{\tau_{n+1} - \tau_n > s} \lambda\bigl(x, h^{-1}(B \setminus \{Y_{\tau_n}\})\bigr) \, \varphi_{\pi_{\tau_n}^\mu}(s; \,\dd x) \, \dd s \bigm| \cY_{\tau_n}^\mu\biggr], \, \p_\mu\text{-a.s.}
	\end{multline}
	We need to exchange the conditional expectation and the time integral appearing in the last line of (\ref{eq:sojournjumplaw}). Let us consider a regular version $G^n$ of the conditional distribution $\p_\mu(\tau_{n+1} -\tau_n \in \cdot, Y_{\tau_n} \in \cdot \mid \cY_{\tau_n}^\mu)$, which always exists in our setting. Define the function $g^n \colon \Omega \times [0, +\infty) \to [0,1]$ as
	\begin{equation*}
		g^n(\omega, t) \coloneqq G^n\bigl(\omega, (t, +\infty], O\bigr).
	\end{equation*}
	Clearly $g^n$ enjoys the following properties
	\begin{itemize}
		\item $g^n(\omega, t) = \p_\mu(\tau_{n+1} - \tau_n > t \mid \cY_{\tau_n}^\mu), \, \p_\mu$-a.s.
		\item $g^n$ is $\bigl(\cY_{\tau_n}^\mu \otimes \cB\bigl([0, +\infty)\bigr)\bigr)$-measurable.
		\item For all $\omega \in \Omega$ the map $t \mapsto g^n(\omega, t)$ is non-increasing and right-continuous.
	\end{itemize}
	Applying the Fubini-Tonelli theorem to the last line of (\ref{eq:sojournjumplaw}) we get
	\begin{multline}
		\p_\mu(\tau_{n+1} -\tau_n \leq t, \, Y_{\tau_{n+1}} \in B \mid \cY_{\tau_n}^\mu) = G\bigl((0,t], B\bigr) \\
		\int_0^t g^n(s) \int_I \lambda\bigl(x, h^{-1}(B \setminus \{Y_{\tau_n}\})\bigr) \, \varphi_{\pi_{\tau_n}^\mu}(s; \,\dd x) \, \dd s, \quad \p_\mu\text{-a.s.}
	\end{multline}
	Thanks to \citep[Ch. VIII, Th. T7]{bremaud:pp}, we obtain that $g^n$ satisfies
	\begin{equation*}
		g^n(t) = 1 - \int_0^t g^n(s) \int_I \lambda\bigl(x, h^{-1}(Y_{\tau_n})^c\bigr) \, \varphi_{\pi_{\tau_n}^\mu}(s; \,\dd x) \, \dd s, \quad t \in (0, \tau_{n+1}-\tau_n].
	\end{equation*}
	This equality implies that on $(0, \tau_{n+1}-\tau_n]$ the function $g^n$ is absolutely continuous for each $\omega \in \Omega$ and solves $\omega$-by-$\omega$ the following ODE
	\begin{equation*}
		\begin{dcases}
			\frac{\dd}{\dd t} g^n(t) = -g^n(t) \int_I \lambda\bigl(x, h^{-1}(Y_{\tau_n})^c\bigr) \, \varphi_{\pi_{\tau_n}^\mu}(t; \,\dd x), & t \in (0, \tau_{n+1}-\tau_n] \\
			g_n(0) = 1
		\end{dcases}
	\end{equation*}
	whose solution is clearly $g_n(t) = \exp\bigl\{-\int_0^t \int_I \lambda\bigl(x, h^{-1}(Y_{\tau_n})^c\bigr) \, \varphi_{\pi_{\tau_n}^\mu}(s; \,\dd x) \, \dd s\bigr\}$ for $t \in (0, \tau_{n+1}-\tau_n]$.
	Therefore we get (\ref{eq:Ysojourntimes}).
	
	Finally, (\ref{eq:Ypostjump}) follows from an immediate application of \citep[Ch. VIII, T7 and T16]{bremaud:pp}. In fact, we have that on $\{\tau_{n+1} < +\infty\}$
	\begin{multline*}
		\p_\mu(Y_{\tau_{n+1}} \in B \mid \cY_{\tau_{n+1}^-}^\mu) = \\
		= \frac{1}{r\bigl(\varphi_{\pi_{\tau_n}^\mu}(\tau_{n+1}^- - \tau_n)\bigr)} \int_I \lambda\bigl(x, h^{-1}(B \setminus \{Y_{\tau_n}\})\bigr) \, \varphi_{\pi_{\tau_n}^\mu}(\tau_{n+1}^- - \tau_n; \,\dd x), \quad \p_\mu\text{-a.s.}
	\end{multline*}
	whence the desired equality. Notice that the fraction is well defined since, by \citep[Ch. VIII, Th. T12]{bremaud:pp}, $r\bigl(\varphi_{\pi_{\tau_n}^\mu}(\tau_{n+1}^- - \tau_n)\bigr) > 0$ on $\{\tau_{n+1} < +\infty\}$. 
\end{proof}

We are now ready to prove the main Theorem of this Section, that is the characterization of the filtering process as a PDMP.

\begin{theorem} \label{th:uncontrolledfilteringprocessPDP}
	For every $\nu \in \Delta_e$ the filtering process $\pi^\nu = (\pi^\nu_t)_{t \geq 0}$
	defined on the probability space $(\Omega, \cX^\nu, \p_\nu)$ and taking values in $\Delta_e$
	is a \emph{Piecewise Deterministic Markov Process} with respect to the filtration $(\cY^\nu_t)_{t \geq 0}$ and the triple $(F, r, R)$ defined in (\ref{eq:PDPvectorfield})-(\ref{eq:PDPtransition}) and with starting point $\nu$.
	
	More specifically, we have that $\pi^\nu$ is a $(\cY^\nu_t)_{t \geq 0}$-Markov process and the following equalities hold $\p_\nu\text{-a.s.}$
	\begin{equation}\label{eq:PDPflow}
	\pi_t^\nu = \varphi_{\pi^\nu_{\tau_n}}(t-\tau_n), \quad t \in [\tau_n, \tau_{n+1}), \, n \in \N_0,
	\end{equation}
	\begin{multline}\label{eq:PDPsojourntimes}
	\p_\nu(\tau_{n+1} - \tau_n > t, \, \tau_n < +\infty \mid \cY^\nu_{\tau_n}) = \\
	\ind_{\tau_n < +\infty} \exp\biggl\{-\int_0^t r\bigl(\varphi_{\pi_{\tau_n}^\mu}(s)\bigr) \, \dd s\biggr\}, \quad t \geq 0, \, n \in \N_0,
	\end{multline}
	\begin{multline}\label{eq:PDPpostjumplocations}
	\p_\nu(\pi^\nu_{\tau_{n+1}} \in D, \, \tau_{n+1} < +\infty \mid \cY^\nu_{\tau_{n+1}^-}) = \\
	\ind_{\tau_{n+1} < +\infty} R\bigl(\varphi_{\pi^\nu_{\tau_n}}(\tau_{n+1}^- - \tau_n); D\bigr), \quad D \in \cB(\Delta_e), \, n \in \N_0,
	\end{multline}
	where, for each $n \in \N_0$, $\varphi_{\pi^\nu_{\tau_n}}$ is the flow starting from $\pi^\nu_{\tau_n}$ and determined by the vector field $F$.
\end{theorem}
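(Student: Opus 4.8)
The plan is to assemble facts already established rather than to prove anything substantially new. Proposition~\ref{prop:filtermarkov} already gives that $\pi^\nu$ is a $(\cY^\nu_t)_{t\geq 0}$-Markov process on $(\Omega,\cX^\nu,\p_\nu)$; what remains is to verify that its trajectories have exactly the structure prescribed by the triple $(F,r,R)$, namely $\pi^\nu_0=\nu$ together with the three identities \eqref{eq:PDPflow}--\eqref{eq:PDPpostjumplocations}, and then to recognise this as the defining description of the PDMP with characteristics $(F,r,R)$. I would also remark at the outset that, since $\pi^\nu_t\in\Delta_{Y_t}$ for all $t$ and the sets $\Delta_y$ are concentrated on distinct level sets, $Y_t$ is recovered from $\pi^\nu_t$ as the unique $y$ with $\pi^\nu_t(h^{-1}(y))=1$; hence $(\cY^\nu_t)_{t\geq 0}$ coincides with the natural completed filtration of $\pi^\nu$ and the PDMP property relative to it is meaningful.

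First I would dispose of the starting point and the flow: if $\nu\in\Delta_y$ then $\nu\circ h^{-1}=\delta_y$ is discrete and $\nu(h^{-1}(y))=1$, so by Remark~\ref{rem:operatorH} $\pi^\nu_0=H_{Y_0}[\nu]=\nu$. For \eqref{eq:PDPflow}, note that on each interval $[\tau_n,\tau_{n+1})$ one has $Y_t\equiv Y_{\tau_n}$, so by \eqref{eq:filteringproctraj} the path $t\mapsto\pi^\nu_t$ solves \eqref{eq:filterODE} with $y=Y_{\tau_n}$ and initial datum $\pi^\nu_{\tau_n}\in\Delta_{Y_{\tau_n}}$; the uniqueness in Theorem~\ref{th:ODEuniqueness} and the flow property recalled in Remark~\ref{rem:flow} then force $\pi^\nu_t=\varphi_{\pi^\nu_{\tau_n}}(t-\tau_n)$. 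I would then check that the intrinsic jump times of $\pi^\nu$ are precisely the $\tau_n$: the left limit $\pi^\nu_{\tau_n-}=\varphi_{\pi^\nu_{\tau_{n-1}}}(\tau_n^--\tau_{n-1})$ lies in $\Delta_{Y_{\tau_n-}}$ whereas $\pi^\nu_{\tau_n}=H_{Y_{\tau_n}}[\Lambda(\pi^\nu_{\tau_n-})]$ lies in $\Delta_{Y_{\tau_n}}$, and $Y_{\tau_n-}\neq Y_{\tau_n}$, so the two measures are supported on disjoint sets and a genuine discontinuity occurs at $\tau_n$; moreover Assumption~\ref{hp:lambda} rules out explosion ($\tau_\infty=+\infty$ $\p_\nu$-a.s.), and by Theorem~\ref{th:ODEuniqueness} the flow stays in $\Delta_e$ for all times, so there is no active boundary and hence no boundary-triggered jump.

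It then remains to identify the two conditional distributions, which is exactly the content of Proposition~\ref{prop:lawY} (applicable with $\mu=\nu$, as $\Delta_e\subset\cP(I)$). Identity \eqref{eq:PDPsojourntimes} is literally \eqref{eq:Ysojourntimes}, since the sojourn times of the PDMP $\pi^\nu$ are the $\tau_n$. For the post-jump law \eqref{eq:PDPpostjumplocations} I would condition on $\cY^\nu_{\tau_{n+1}^-}$: on $\{\tau_{n+1}<+\infty\}$ the pre-jump value $\pi^\nu_{\tau_{n+1}-}=\varphi_{\pi^\nu_{\tau_n}}(\tau_{n+1}^--\tau_n)$ is $\cY^\nu_{\tau_{n+1}^-}$-measurable, $\pi^\nu_{\tau_{n+1}}=H_{Y_{\tau_{n+1}}}[\Lambda(\pi^\nu_{\tau_{n+1}-})]$ is a jointly measurable function of $Y_{\tau_{n+1}}$ and of this value, and by \eqref{eq:Ypostjump} the $\cY^\nu_{\tau_{n+1}^-}$-conditional law of $Y_{\tau_{n+1}}$ is $\rho(\pi^\nu_{\tau_{n+1}-},\cdot)$; hence, by the freezing lemma, for every $D\in\cB(\Delta_e)$
\begin{align*}
\p_\nu\bigl(\pi^\nu_{\tau_{n+1}}\in D\mid\cY^\nu_{\tau_{n+1}^-}\bigr)
&=\int_O\ind_D\bigl(H_\upsilon[\Lambda(\pi^\nu_{\tau_{n+1}-})]\bigr)\,\rho(\pi^\nu_{\tau_{n+1}-},\dd\upsilon)\\
&=R\bigl(\pi^\nu_{\tau_{n+1}-},D\bigr),
\end{align*}
which is \eqref{eq:PDPpostjumplocations} by the definition \eqref{eq:PDPtransition} of $R$.

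Finally I would verify that $(F,r,R)$ is an admissible characteristic triple on $\Delta_e$: $F$ generates a unique global flow by Theorem~\ref{th:ODEuniqueness}; $r$ is measurable and bounded, since by \eqref{eq:PDPjumprate} $r(\nu)=\int_I\lambda(x,h^{-1}(y)^c)\,\nu(\dd x)\leq\sup_{x\in I}\lambda(x)<+\infty$ under Assumption~\ref{hp:lambda}, so $\pi^\nu$ performs finitely many jumps on every bounded interval; and $R$ is a transition probability on $(\Delta_e,\cB(\Delta_e))$ with $R(\nu,\cdot)$ supported on $\Delta_e\setminus\Delta_y$ for $\nu\in\Delta_y$. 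Combined with the Markov property from Proposition~\ref{prop:filtermarkov} and the identities \eqref{eq:PDPflow}--\eqref{eq:PDPpostjumplocations}, this exhibits $\pi^\nu$ as the PDMP with triple $(F,r,R)$ and starting point $\nu$. I expect the only genuinely delicate points to be the bookkeeping that identifies the intrinsic PDMP jump times with $\{\tau_n\}$ (genuine discontinuity, non-explosion, absence of boundary jumps) and the freezing/measurability step that turns \eqref{eq:Ypostjump} into \eqref{eq:PDPpostjumplocations}; everything else is a direct transcription of Theorems~\ref{thm:filteringproc} and~\ref{th:ODEuniqueness} and Propositions~\ref{prop:filtermarkov} and~\ref{prop:lawY}.
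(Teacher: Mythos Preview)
Your proposal is correct and follows essentially the same approach as the paper: verify $\pi^\nu_0=\nu$ via $H_y[\nu]=\nu$, invoke Proposition~\ref{prop:filtermarkov} for the Markov property, deduce \eqref{eq:PDPflow} from \eqref{eq:filteringproctraj} and the uniqueness in Theorem~\ref{th:ODEuniqueness}, identify \eqref{eq:PDPsojourntimes} with \eqref{eq:Ysojourntimes}, and obtain \eqref{eq:PDPpostjumplocations} from \eqref{eq:Ypostjump} by the freezing lemma applied to $\pi^\nu_{\tau_{n+1}}=H_{Y_{\tau_{n+1}}}[\Lambda(\pi^\nu_{\tau_{n+1}-})]$. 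You are in fact more explicit than the paper on a few points it leaves implicit (that the intrinsic jump times of $\pi^\nu$ coincide with the $\tau_n$, absence of boundary jumps, and admissibility of the triple $(F,r,R)$), but the skeleton of the argument is the same.
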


\begin{proof}
	Fix $\nu \in \Delta_e$, hence $\nu \in \Delta_y$ for some $y \in O$. It is clear that $\p_\nu(Y_0 = y) = 1$ and that $H_y[\nu] = \nu$. Hence $\p_\nu(\pi^\nu_0 = \nu) = 1$, \ie the filtering process $\pi^\nu$ starts from $\nu$. The $(\cY^\nu_t)_{t \geq 0}$-Markov property for the process $\pi^\nu$ has already been established in Proposition \ref{prop:filtermarkov}. The deterministic dynamic between consecutive jump times expressed by (\ref{eq:PDPflow}) easily follows from (\ref{eq:filteringproctraj}). Moreover (\ref{eq:PDPsojourntimes}) coincides with (\ref{eq:Ysojourntimes}).
	
	It remains to prove (\ref{eq:PDPpostjumplocations}). From (\ref{eq:filteringproctraj}) we have that on $\{\tau_{n+1} < +\infty\}$ and for all $D \in \cB(\Delta_e)$
	\begin{equation*}
		\p_\nu\Bigl(\pi^\nu_{\tau_{n+1}} \in D \Bigm| \cY^\nu_{\tau_{n+1}^-}\Bigr) = \p_\nu\Bigl(H_{Y_{\tau_{n+1}}}\bigl[\Lambda\bigl(\pi_{\tau_{n+1}^-}^\nu\bigr)\bigr] \in D \Bigm| \cY^\nu_{\tau_{n+1}^-}\Bigr).
	\end{equation*}
	Observing that $\Lambda(\pi_{\tau_{n+1}-}^\nu) = \Lambda\bigl(\varphi_{\pi_{\tau_n}^\nu}(\tau_{n+1}^- - \tau_n)\bigr)$ is a $\cY^\nu_{\tau_{n+1}^-}$-measurable random variable (with values on $\Delta_e$), an easy application of the freezing lemma to the last displayed equation entails that
	\begin{equation*}
		\p_\nu\Bigl(\pi^\nu_{\tau_{n+1}} \in D \Bigm| \cY^\nu_{\tau_{n+1}^-}\Bigr) \!=\! \int_O \!\!\ind_D\bigl(H_\upsilon\bigl[\Lambda\bigl(\varphi_{\pi_{\tau_n}^\nu}(\tau_{n+1}^- - \tau_n)\bigr)\bigr]\bigr) \rho\bigl(\varphi_{\pi_{\tau_n}^\nu}(\tau_{n+1}^- - \tau_n), \dd \upsilon \bigr),
	\end{equation*}
	hence the desired result.
\end{proof}

We conclude this Section stating the following \emph{weak Feller} property of the transition kernel $R$. This is a corollary of Proposition~\ref{prop:weakFeller} that the reader can find in the following Section. However, we state it also in the uncontrolled case because it is useful on its own, for instance as a fundamental tool in optimal stopping or optimal switching problems (see, e.g., its counterpart in the Markov chain case in \citep[Prop. 5.3]{confortola:filt}).

\begin{proposition}\label{prop:weakfeller}
	Let Assumption \ref{hp:lambda} hold. Then for every bounded and continuous function $w \colon \Delta_e \to \R$ the function $\rho \mapsto r(\rho) \int_{\Delta_e} w(p) R(\rho; \dd p)$ is bounded and continuous on $\Delta_e$.
\end{proposition}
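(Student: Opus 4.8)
The plan is to reduce the statement to one total-variation continuity property of the operator $H$ and then to establish that property by a quantitative estimate on conditional distributions. Boundedness is immediate: by Assumption~\ref{hp:lambda}, $r(\rho) = \int_I \lambda\bigl(x, h^{-1}(y)^c\bigr)\,\rho(\dd x) \le \bar\lambda := \sup_{x\in I}\lambda(x) < +\infty$ for $\rho\in\Delta_y$, and $R(\rho;\cdot)$ is a probability measure, so $\lvert r(\rho)\int_{\Delta_e} w(p)\,R(\rho;\dd p)\rvert \le \bar\lambda\,\norm{w}_\infty$. For the continuity I would first unwind the definitions \eqref{eq:PDPjumprate}--\eqref{eq:rhotransition}: using Remark~\ref{rem:rpositive} together with the identity $r(\rho) = \Lambda(\rho)(I)$ (so that $r(\rho)=0$ forces $\Lambda(\rho)=0$, it being a non-negative measure), one gets, for every $\rho\in\Delta_e$,
\begin{equation*}
	r(\rho)\int_{\Delta_e} w(p)\,R(\rho;\dd p) = \int_O w\bigl(H_\upsilon[\Lambda(\rho)]\bigr)\,\bigl(\Lambda(\rho)\circ h^{-1}\bigr)(\dd\upsilon),
\end{equation*}
so it suffices to prove this is continuous in $\rho\in\Delta_e$.

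Two elementary reductions precede the core of the argument. First, the sets $\Delta_y$ are pairwise mutually singular, so $\norm{\nu-\nu'}_{TV}=2$ whenever $\nu\in\Delta_y$, $\nu'\in\Delta_{y'}$ with $y\ne y'$; hence a convergent sequence $\rho_n\to\rho$ in $\Delta_e$ eventually lies, together with $\rho$, in a single $\Delta_y$. Second, restricted to $\Delta_y$ the map $\rho\mapsto\Lambda(\rho)$ is $\bar\lambda$-Lipschitz for $\norm{\cdot}_{TV}$ (Assumption~\ref{hp:lambda}), and the push-forward by $h$ is a $\norm{\cdot}_{TV}$-contraction, so $\mu_n:=\Lambda(\rho_n)\to\mu:=\Lambda(\rho)$ in total variation on $I$ and $\mu_n\circ h^{-1}\to\mu\circ h^{-1}$ in total variation on $O$. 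Thus it is enough to show that $\mu\mapsto\int_O w\bigl(H_\upsilon[\mu]\bigr)\,(\mu\circ h^{-1})(\dd\upsilon)$ is continuous on $\bigl(\cM_+(I),\norm{\cdot}_{TV}\bigr)$.

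The core is the estimate
\begin{equation*}
	\int_O \norm{H_\upsilon[\mu] - H_\upsilon[\mu']}_{TV}\,(\mu'\circ h^{-1})(\dd\upsilon) \le C\,\norm{\mu-\mu'}_{TV}, \qquad \mu,\mu'\in\cM_+(I),
\end{equation*}
for a universal constant $C$. I would prove it by introducing the dominating finite measure $\nu:=\mu+\mu'$, setting $p:=\dd\mu/\dd\nu$ and $q:=\dd\mu'/\dd\nu$, expressing via Bayes' formula $H_\upsilon[\mu](\phi) = \E_\nu[\phi p \mid h^{-1}(\cO)]/\E_\nu[p \mid h^{-1}(\cO)]$ read along the fibre over $\upsilon$, and using the identity $(\mu'\circ h^{-1})(\dd\upsilon) = \E_\nu[q\mid h^{-1}(\cO)]\,(\nu\circ h^{-1})(\dd\upsilon)$. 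A direct computation — isolating the fibres on which $\E_\nu[p\mid h^{-1}(\cO)]$ vanishes, which carry $\mu'\circ h^{-1}$-mass at most $\norm{\mu-\mu'}_{TV}$ — yields the bound first for $\int_O\lvert H_\upsilon[\mu](\phi)-H_\upsilon[\mu'](\phi)\rvert\,(\mu'\circ h^{-1})(\dd\upsilon)$, uniformly over bounded measurable $\phi$ with $\norm{\phi}_\infty\le1$; writing $\norm{\cdot}_{TV}$ as the supremum of the variation sums over an increasing sequence of finite partitions drawn from a fixed countable algebra generating $\cI$, and invoking monotone convergence, then upgrades this to the displayed inequality.

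Granting the estimate, continuity follows. For $\mu_n\to\mu$ in total variation, split the difference of the two integrals as $A_n+B_n$ with $A_n := \int_O w\bigl(H_\upsilon[\mu_n]\bigr)\,\dd(\mu_n\circ h^{-1}-\mu\circ h^{-1})$ and $B_n := \int_O\bigl[w(H_\upsilon[\mu_n])-w(H_\upsilon[\mu])\bigr]\,\dd(\mu\circ h^{-1})$. Then $\lvert A_n\rvert\le\norm{w}_\infty\,\norm{\mu_n\circ h^{-1}-\mu\circ h^{-1}}_{TV}\to0$; and the estimate, applied with $\mu_n$ in the role of $\mu$ and $\mu$ in the role of $\mu'$, gives $\int_O\norm{H_\upsilon[\mu_n]-H_\upsilon[\mu]}_{TV}\,\dd(\mu\circ h^{-1})\to0$, so the $\Delta_e$-valued maps $\upsilon\mapsto H_\upsilon[\mu_n]$ converge to $\upsilon\mapsto H_\upsilon[\mu]$ in $\mu\circ h^{-1}$-measure; since $w$ is bounded and continuous, $w(H_\upsilon[\mu_n])\to w(H_\upsilon[\mu])$ in $L^1(\mu\circ h^{-1})$ by the usual sub-subsequence plus dominated-convergence argument, whence $B_n\to0$. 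The step I expect to be the real obstacle is this core estimate: because $w$ is merely continuous — not Lipschitz — for the strong total-variation topology, in which regular conditional distributions are notoriously unstable, one cannot control $\lvert w(H_\upsilon[\mu_n])-w(H_\upsilon[\mu])\rvert$ pointwise and is forced into the quantitative $L^1$-bound on $\upsilon\mapsto\norm{H_\upsilon[\mu]-H_\upsilon[\mu']}_{TV}$, whose dominating-measure bookkeeping and separate handling of the ``locally $\mu$-null'' fibres is genuinely the crux. (This is presumably why the paper prefers to derive the statement from its controlled counterpart, Proposition~\ref{prop:weakFeller}.)
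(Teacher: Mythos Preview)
Your approach is correct and genuinely different from the paper's. The paper derives Proposition~\ref{prop:weakfeller} as the uncontrolled instance of Proposition~\ref{prop:weakFeller}, and the proof of the latter hinges on \emph{pointwise} (in $\upsilon=h(z)$) total-variation continuity of $\eta\mapsto H_\upsilon[\Lambda(\eta,u)]$ on the support of $\Lambda(\eta,u)$ --- this is the content of Lemma~\ref{lemma:Hcontinuity}, which reduces $O$ to $[0,1]$ via a Borel isomorphism and then argues through differentiation of measures along shrinking intervals. You instead aim for an \emph{integrated} $L^1(\mu'\circ h^{-1})$ stability bound for the disintegration $\upsilon\mapsto H_\upsilon[\cdot]$, which bypasses the pointwise analysis entirely (a notoriously delicate issue for conditional distributions) and recovers continuity via convergence in measure plus dominated convergence. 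This is a more robust and arguably more elementary route; the paper's pointwise result is stronger but leans on more structure.

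One technical gap: your ``upgrade'' from the single-$\phi$ bound to the full TV bound via partitions and monotone convergence does not work as written --- summing the single-$\phi$ bound over a $k$-element partition produces $Ck\,\norm{\mu-\mu'}_{TV}$, not $C\,\norm{\mu-\mu'}_{TV}$, because the test functions do not combine. The remedy is simply to skip this step and compute the TV norm directly: with $\nu=\mu+\mu'$, $p=\dd\mu/\dd\nu$, $q=1-p$, $\bar p=\E_\nu[p\mid h^{-1}(\cO)]$, on the fibres where $\bar p,\bar q>0$ the densities of $H_\upsilon[\mu]$ and $H_\upsilon[\mu']$ with respect to $H_\upsilon[\nu]$ are $p/\bar p$ and $q/\bar q$, and since $\min(p/\bar p,q/\bar q)\ge\min(p,q)/\max(\bar p,\bar q)$ one gets $\norm{H_\upsilon[\mu]-H_\upsilon[\mu']}_{TV}\le 2\bigl(\max(\bar p,\bar q)-\bar m\bigr)/\max(\bar p,\bar q)$ with $\bar m=\E_\nu[\min(p,q)\mid h]$. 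Integrating against $\bar q\,\dd(\nu\circ h^{-1})$ and using $\max(\bar p,\bar q)-\bar m\le 1-2\bar m=\E_\nu[\,|p-q|\mid h\,]$ yields your core estimate with $C=2$; together with the null-fibre contribution you already isolated, the bound holds globally.
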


\section{Optimal control problem with noise-free partial observation}\label{sec:jmpoptcontrol}

In the present Section we address an optimal control problem for a pure jump Markov process with noise-free partial observation. We are given a pair $(X,Y) = (X_t,Y_t)_{t \geq 0}$ of continuous-time processes, that will be precisely defined below. As in the previous Sections, we call $X$ the \emph{unobserved} process and $Y$ the \emph{observed} process. Their respective state spaces, denoted by $I$ and $O$, are supposed to be complete and separable metric spaces, equipped with their respective Borel $\sigma$-algebras $\cI$ and $\cO$.
The aim is to control the dynamics of a pure jump Markov process by deciding the control actions on the basis of the observation provided by $Y$. The control is described by third stochastic process $\bfu = (u_t)_{t \geq 0}$, the \emph{control process}, with values in the set of Borel probability measures $\cP(U)$ on a measurable space $(U, \cU)$, the \emph{space of control actions}.

Optimal control problems with partial observation require a two step procedure to be studied. First, we need to express the available informations about the unobserved process contained in the observed one. This is done through the \emph{filtering process}, that we already studied in Sections \ref{sec:filtering} and \ref{sec:filterchar}, although in the "uncontrolled" setting. The second step is to introduce a \emph{separated control problem}, in which the state variable becomes the filtering process itself, so that we can study a control problem with complete observation. As one expects, the original control problem with partial observation and the separated one are connected through their respective value functions. Therefore, it is possible to analyze the latter to have a complete study of the first one.

\subsection{Formulation of the control problem}\label{sec:optcontrolform}
In this Subsection we introduce our optimal control problem with noise-free partial observation. The results in this Subsection are stated without proof, since they are slight modifications or generalizations of those proved in Sections \ref{sec:filtering} and \ref{sec:filterchar}.

The unobserved process $X$ is a \emph{pure jump Markov process}. In our setting, as we will shortly see, to control its dynamics means to control its law. This is described by the initial law of $X$ and a \emph{controlled rate transition measure} $\lambda$ from $(I \times U, \cI \otimes \cU)$ into $(I, \cI)$ such that
\begin{equation*}
	\lambda(x, u, \{x\}) = 0, \quad x \in I, \, u \in U.
\end{equation*}
To simplify the notation it is convenient to define the \emph{controlled jump rate function} $\lambda \colon I \times U \to [0, +\infty)$ as $\lambda(x, u) \coloneqq \lambda(x, u, I), \quad x \in I, \, u \in U$.
It will always be clear from the context if $\lambda$ refers to the rate transition measure or the jump rate function.

We introduce the following Assumption.
\begin{assumption} \label{assumption:lambdainf}
	\mbox{}
	\begin{enumerate}[1.]
	\item The space of control actions $U$ is a compact metric space. \label{assumption:Ucompact}
	\item For each $x \in I$ the map $u \mapsto \lambda(x, u, \cdot)$ is continuous from $U$ to $\cM(I)$, hence bounded and uniformly continuous on $U$. \label{assumption:lambdacont}
	\item $\displaystyle \sup_{(x,u) \in I \times U} \lambda(x, u) < +\infty.$ \label{assumption:lambdabdd}
	\end{enumerate}
\end{assumption}
\begin{rem}
	We recall that at the beginning of Section \ref{sec:filterchar} we endowed the space $\cM(I)$ with the total variation norm. Notice, also, that compactness of $U$ entails that $\cP(U)$ is a compact metrizable space, too.
\end{rem}

To define the triple of processes $(X,Y,\bfu)$ presented above, we need to put ourselves in a canonical framework for a continuous time pure jump Markov process. as follows
\begin{itemize}
	\item Let $\bar{\Omega} = \{\bar{\omega} \colon [0, +\infty) \to I, \text{ c\'adl\'ag}\}$ be the canonical space for \mbox{$I$~-~valued} pure jump Markov processes. We define $X_t(\omega) = \omega(t)$, for $\omega \in \Omega, \, t \geq 0$.
	\item The family of $\sigma$-algebras $(\cX_t^\circ)_{t \geq 0}$ given by
	\begin{equation*}
	\cX_t^\circ = \sigma(X_s, 0 \leq s \leq t), \quad
	\cX^\circ = \sigma(X_s, s \geq  0),
	\end{equation*}
	is the natural filtration of the process $X = (X_t)_{t \geq 0}$.
	\item The observed process $Y$ and its natural filtration $(\cY_t^\circ)_{t \geq 0}$ are defined as
	\begin{align*}
	Y_t(\omega) &= h(X_t(\omega)), \, t \geq 0, \, \omega \in \Omega; & \cY_t^\circ &= \sigma\bigl(Y_s,\, 0 \leq s \leq t\bigr), \, t \geq 0.
	\end{align*} 
	We can equivalently describe this process via a marked point process $(\eta_n, \tau_n)_{n \in \N}$ together with the initial condition $\eta_0 = h(\xi_0) = Y_0$.
	\item The class of \emph{admissible controls} is given by
	\begin{equation}\label{eq:admissiblecontrolsinf}
		\cU_{ad} \coloneqq \Bigl\{\bfu \colon \Omega \times [0,+\infty) \to \cP(U), \, (\cY_t^\circ)_{t \geq 0}\text{~-~predictable}\Bigr\}.
	\end{equation}
	Regarding the choice of $\cP(U)$-valued control processes, see Remark \ref{rem:relaxedcontrols} below. 
	\item For every $\mu \in \cP(I)$ and all $\bfu \in \cU_{ad}$ we denote by $\p_\mu^\bfu$ the unique probability measure on $(\Omega, \cX^\circ)$ such that the process $X$ is a pure jump Markov process, with initial law $\mu$ and $(\cX_t^\circ)_{t \geq 0}$-compensator
	given by
	\begin{equation}\label{eq:predprojinf}
		\nu^\bfu(\omega;\, \dd t \, \dd z) \coloneqq 
		\ind_{t < T_{\infty}(\omega)} \int_U \lambda(X_{t-}(\omega), \fru, \dd z) \, u_t(\omega \, ; \dd \fru) \, \dd t, \quad \omega \in \Omega
	\end{equation}
	where $T_\infty(\omega), \, \omega \in \Omega$ is the accumulation point of the sequence of jump times of $X$, sometimes called the \emph{explosion point}.
	Uniqueness of the probability measure $\p_\mu^\bfu$ is granted by \citep[Th. 3.6]{jacod:mpp}.
	\item Let $\cX^{\mu, \bfu}$ be the $\p_\mu^\bfu$-completion of $\cX^\circ$. We still denote by $\p_\mu^\bfu$ the measure naturally extended to this new $\sigma$-algebra.
	Let $\cZ^{\mu, \bfu}$ be the family of sets in $\cX^{\mu, \bfu}$ with zero $\p_\mu^\bfu$-probability and define $\cY_t^{\mu,\bfu} \coloneqq \sigma(\cY_t^\circ \cup \cZ^{\mu,\bfu}), \quad t \geq 0$.
\end{itemize}

\begin{rem}\label{rem:relaxedcontrols}
	Considering $\cP(U)$-valued control processes (the so called \emph{relaxed controls}), instead of ordinary $U$-valued processes, has considerable technical benefits that will allow us to prove the existence of an optimal control. At this stage such a choice has almost no impact on the problem itself (except for a slightly more complicated notation), being both $U$ and $\cP(U)$ compact. It is important to notice that relaxed controls can be approximated by ordinary ones, and the latter are included in this formulation by considering the isomorphism between $U$ and the subset of $\cP(U)$ given by Dirac probability measures. 
\end{rem}

Two consequences can be deduced from point process theory, summarized in the following two Remarks.

\begin{rem}\label{rem:Xnonexplosive}
	From point \ref{assumption:lambdabdd} of Assumption \ref{assumption:lambdainf} it follows that the processes $X$ and $Y$ are both $\p_\mu^\bfu$-a.s. \mbox{non-explosive}, \ie that the accumulation points of their respective sequences of jump times are equal to $+\infty, \, \p_\mu^\bfu$-a.s.
	For this reason we will drop the term $\ind_{t < T_\infty}$ appearing in (\ref{eq:predprojinf}).
\end{rem}

\begin{rem}\label{rem:controlrepr}	
	Predictable processes with respect to the natural filtration of a point process admit a precise description (see \eg \citep[Lemma 3.3]{jacod:mpp} or \citep[Appendix A2, Theorem T34]{bremaud:pp}).
	A control process $\bfu \in \cU_{ad}$ is completely determined by a sequence of \mbox{Borel-measurable} functions $(u_n)_{n \in \N_0}$, with $u_n \colon [0, +\infty) \times O \times \bigl((0, +\infty) \times O \bigr)^n \to \cP(U)$ for each $n \in \N_0$, so that we can write
	\begin{multline}\label{eq:controlsrepresentationinf}
	u_t(\omega) = u_0(t, Y_0(\omega)) \ind(0 \leq t \leq \tau_1(\omega)) +\\
	\sum_{n = 1}^\infty u_n(t, Y_0(\omega), \tau_1(\omega), Y_{\tau_1}(\omega), \dots, \tau_n(\omega), Y_{\tau_n}(\omega)) \ind(\tau_n(\omega) < t \leq \tau_{n+1}(\omega)).
	\end{multline}
	This kind of decomposition will be of fundamental importance in the analysis of the optimal control problem. Moreover, we will frequently switch between the notation $(u_t)_{t \geq 0}$ and $(u_n)_{n \in \bar \N_0}$ and, to simplify matters, we will often use the more compact writing $u_n(\cdot)$ instead of $u_n(\cdot, Y_0(\omega), \dots, \tau_n(\omega), Y_{\tau_n}(\omega)), \, n \in \N_0$.
\end{rem}

Our optimal control problem aims at minimizing, for all possible initial laws $\mu \in \cP(I)$ of the process $X$, the following \emph{cost functional}
\begin{equation} \label{eq:costfunctionalinf}
J(\mu, \bfu) = \e_\mu^{\bfu} \biggl[ \int_0^\infty e^{-\beta t} \int_U f(X_t, \fru) \, u_t(\dd \fru) \, \dd t \biggr], \quad \bfu \in \cU_{ad}
\end{equation}
where $f$ is called \emph{cost function} and $\beta > 0$ is a fixed constant called \emph{discount factor}. The optimal cost is represented by the \emph{value function}
\begin{equation} \label{eq:valuefunctioninf}
V(\mu) = \inf_{\bfu \in \cU_{ad}} J(\mu, \bfu).
\end{equation}

We make the following assumption on the cost function $f$, ensuring that the functional $J$ is well defined (and also bounded).
\begin{assumption} \label{assumption:costfunctioninf}
	The function $f \colon I \times U \to \R$ is bounded and uniformly continuous. In particular, for some constant $C_f > 0$ it holds that
	\begin{equation}\label{eq:fboundedinf}
	\sup_{(x,u) \in I \times U} \abs{f(x,u)} \leq C_f,
	\end{equation}
\end{assumption}

We can transform the problem formulated above into a complete observation problem by means of the \emph{filtering process}. This is defined as the $\cP(I)$-valued process given by
\begin{equation*}
	\p_\mu^\bfu(X_t \in A \mid \cY_t^{\mu, \bfu}), \quad t \geq 0, \, A \in \cI.
\end{equation*}
In our case, its state space is the subset of $\cP(I)$ called \emph{effective simplex} $\Delta_e$ and defined as
\begin{equation}\label{eq:effectivesimplexinf}
	\Delta_e \coloneqq \bigcup_{y \in O} \Delta_y, \quad \Delta_y \coloneqq \{\nu \in \cP(I) \colon \nu\bigl(h^{-1}(y)^c\bigr) = 0\}, \, y \in O
\end{equation}
It is worth noticing that the filtering process is a $(\cY_t^{\mu, \bfu})_{t \geq 0}$~-~adapted process and since $(\cY_t^{\mu, \bfu})_{t \geq 0}$ is right continuous we can choose a $(\cY_t^{\mu, \bfu})_{t \geq 0}$~-~progressive version. We will assume this whenever needed.

The filtering equation and the characterization of the filtering process as a PDP are generalizations of the corresponding results in Sections \ref{sec:filtering} and \ref{sec:filterchar}. Since their proofs are mere adaptations of those already provided, we state these results without proofs.

\begin{theorem}\label{th:PDPflowinf}
	For each $y \in O$, let $F_y \colon \cM(I) \times U \to \cM(I)$ be the map defined as
	\begin{equation} \label{eq:controlledvectorfieldinf}
	F_y(\nu, u) \coloneqq \cB_y^u \nu - \nu \, \cB_y^u \nu(I), \quad \nu \in \cM(I), \, u \in U
	\end{equation}
	where the controlled operator $\cB_y^u$ is defined for all $y \in O$ and $u \in U$ by
	\begin{equation}\label{eq:operatorByinf}
	\cB_y^u \nu(\dd z) \coloneqq \ind_{\pre(y)}(z) \int_I \lambda(x, u, \dd z) \, \nu(\dd x) - \lambda(z, u) \nu(\dd z), \quad \nu \in \cM(I).
	\end{equation}
	
	For all $y \in O, \, \rho \in \Delta_y$ and all measurable $m \colon [0, +\infty) \to \cP(U)$, the ODE
	\begin{equation} \label{eq:relaxedODEinf}
	\begin{dcases}
	\frac{\dd}{\dd t} z(t) = \int_U F_y(z(t), u) \, m(t \, ;\dd u), \quad t \geq 0, \\
	z(0) = \rho,
	\end{dcases}
	\end{equation}
	admits a unique strong solution $z \colon [0, +\infty) \to \cM(I)$.
	Moreover $z(t) \in \Delta_y$ for all $t \geq 0$.
\end{theorem}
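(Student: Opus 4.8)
The plan is to mirror the proof of the uncontrolled result, Theorem~\ref{th:ODEuniqueness}, reading the relaxed controlled ODE~(\ref{eq:relaxedODEinf}) as a Carath\'eodory ODE in the Banach space $\bigl(\cM(I),\norm{\cdot}_{\mathrm{TV}}\bigr)$. First I would collect the analytic properties of the field. For fixed $y \in O$ the map $\nu \mapsto \cB_y^u\nu$ is linear with $\norm{\cB_y^u\nu}_{\mathrm{TV}} \le 2\Lambda_\infty\,\norm{\nu}_{\mathrm{TV}}$, where $\Lambda_\infty \coloneqq \sup_{(x,u)}\lambda(x,u) < +\infty$ by point~\ref{assumption:lambdabdd} of Assumption~\ref{assumption:lambdainf}; hence $F_y(\cdot,u)$ is bounded on bounded sets and Lipschitz on bounded subsets of $\cM(I)$, with constants \emph{uniform in} $u \in U$ (the quadratic term $\nu\mapsto\nu\,\cB_y^u\nu(I)$ is handled by the usual product estimate). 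Continuity of $u\mapsto\lambda(x,u,\cdot)$ (point~\ref{assumption:lambdacont}) gives joint measurability of $(u,\nu)\mapsto F_y(\nu,u)$, so that for any measurable $m\colon[0,+\infty)\to\cP(U)$ the averaged field $\bar F_y(t,\nu)\coloneqq\int_U F_y(\nu,u)\,m(t;\dd u)$ is Bochner-measurable in $t$, bounded on bounded sets by a bound independent of $t$, and Lipschitz in $\nu$ on bounded sets uniformly in $t$. This is exactly the Carath\'eodory framework.

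Given this, local existence and uniqueness follow from a standard Banach fixed-point (Picard) argument applied to the integral equation $z(t)=\rho+\int_0^t\bar F_y(s,z(s))\,\dd s$ on $C\bigl([0,\delta];\bar B_R(\rho)\bigr)$ for $R$ fixed and $\delta$ small; the local solution extends to a unique maximal solution on some $[0,t_{\max})$, and uniqueness on $[0,t_{\max})$ is the usual continuation argument. It then remains to show the maximal solution stays in $\Delta_y$, which, since $\Delta_y\subseteq\{\norm{\cdot}_{\mathrm{TV}}=1\}$ is bounded, forces $t_{\max}=+\infty$. I would track three quantities along the solution. \emph{(i) Total mass:} from $F_y(\nu,u)(I)=\cB_y^u\nu(I)\,\bigl(1-\nu(I)\bigr)$ one gets $\tfrac{\dd}{\dd t}\bigl(1-z(t)(I)\bigr)=-a(t)\bigl(1-z(t)(I)\bigr)$ with $a(t)\coloneqq\int_U\cB_y^u z(t)(I)\,m(t;\dd u)$ bounded on $[0,t_{\max})$, so, as $1-z(0)(I)=0$, mass stays $\equiv 1$. \emph{(ii) Mass on $h^{-1}(y)^c$:} restriction to $E\coloneqq h^{-1}(y)^c$ is a norm-one linear operator commuting with the Bochner integral and the time derivative, and on $E$ the jump-in term of $\cB_y^u$ vanishes, so $\bigl(\bar F_y(t,z(t))\bigr)\big|_E$ has total variation $\le C(t)\,\norm{z(t)|_E}_{\mathrm{TV}}$ with $C$ bounded; Gronwall and $\rho|_E=0$ then give $z(t)|_E\equiv 0$. \emph{(iii) Positivity:} for every $\nu\in\cM_+(I)$ and every Borel $A$ with $\nu(A)=0$ one has $F_y(\nu,u)(A)=\int_I\lambda\bigl(x,u,A\cap h^{-1}(y)\bigr)\,\nu(\dd x)\ge 0$, uniformly in $u$, hence $\bar F_y$ satisfies the same quasi-positivity (Kamke) condition on the boundary of the closed convex cone $\cM_+(I)$, and by a standard invariance criterion for Carath\'eodory ODEs in Banach spaces $z(0)=\rho\ge 0$ implies $z(t)\ge 0$ for all $t$. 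Combining (i)--(iii), $z(t)$ is a probability measure carried by $h^{-1}(y)$, i.e. $z(t)\in\Delta_y$, and the proof is complete.

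The only point that really goes beyond Theorem~\ref{th:ODEuniqueness} is the relaxed-control bookkeeping — checking that the $\cP(U)$-averaged, merely $t$-measurable field fits the Carath\'eodory setting — which is where the uniformity in $u$ of the Lipschitz and boundedness constants (points~\ref{assumption:Ucompact}--\ref{assumption:lambdabdd} of Assumption~\ref{assumption:lambdainf}) is essential. I expect the genuine technical obstacle to be the infinite-dimensional invariance statement (iii): one should either invoke a Nagumo-type theorem for the positive cone of $\cM(I)$, or, equivalently, observe that the \emph{unnormalized} flow $\dot\zeta=\bar{\cB}_y(t)\zeta$ is the forward equation of a pure-jump process on $I$ with rate $\lambda(\cdot,u)$ killed upon attempting a jump out of $h^{-1}(y)$, so that $\zeta(t)$ is automatically a nonnegative sub-probability measure and $z(t)=\zeta(t)/\zeta(t)(I)$; this representation re-proves (i)--(ii) as well.
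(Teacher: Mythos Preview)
Your proposal is correct. The paper does not prove this controlled statement directly but declares it a slight modification of Theorem~\ref{th:ODEuniqueness}, whose proof in Appendix~\ref{app:Fvectorfield} takes a different route: it invokes a Banach-space invariance theorem of Martin, verifying for the autonomous field $F_y$ on the closed set $\Delta_y$ the Nagumo tangent condition (concretely, $\mu+\epsilon F_y(\mu)\in\Delta_y$ for $\epsilon>0$ small), a boundedness condition, and the one-sided Lipschitz bound $\langle\mu-\nu,F_y(\mu)-F_y(\nu)\rangle_+\le C\norm{\mu-\nu}_{TV}^2$. That approach packages existence, uniqueness and invariance in a single citation, and the checks for positivity, unit mass and support on $h^{-1}(y)$ are carried out simultaneously inside the one computation $\mu+\epsilon F_y(\mu)\in\Delta_y$. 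Your route trades this for a Picard argument plus three separate invariance checks; it is more self-contained and, more to the point here, handles the time-dependent Carath\'eodory structure of $\bar F_y(t,\cdot)=\int_U F_y(\cdot,u)\,m(t;\dd u)$ directly, whereas extending Martin's autonomous statement to a merely measurable-in-$t$ relaxed field is an additional step the paper leaves implicit. Your unnormalized-flow alternative to the cone-invariance argument is not in the paper and is a clean way to bypass the infinite-dimensional Nagumo criterion altogether.
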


\phivarphi

\begin{rem}\label{rem:flowinf}
	Similarly to what we did following Remark \ref{rem:flow}, for all measurable control functions $m$ as above we can define a \emph{global controlled flow} $\phi^m$ on $\Delta_e$, so that the solution $z$ to \eqref{eq:relaxedODEinf} can be denoted by $\phi_\rho^m$.
	In this way, for all fixed control functions $m$ and $t \geq 0, \, \rho \mapsto \phi_\rho^m(t)$ is a function $\Delta_e \to \Delta_e$ leaving each set $\Delta_y$ invariant.
	We also associate to the global flow a \emph{global controlled vector field} $F \colon \Delta_e \times U \to \Delta_e$ defined as
	\begin{equation}\label{eq:globalvectorfieldinf}
		F(\nu, u) \coloneqq F_y(\nu, u) = \cB_y^u \nu - \nu \, \cB_y^u \nu(I), \quad \nu \in \Delta_y, \, u \in U.
	\end{equation}
\end{rem}

\begin{theorem}[Filtering equation]
	For all $\omega \in \Omega$ define $\tau_0(\omega) \equiv 0$ and for fixed $\bfu \in \cU_{ad}$ the stochastic process
	$\pi^{\mu,\bfu} = (\pi^{\mu,\bfu}_t)_{t \geq 0}$ as the unique solution 
	of the following system of ODEs
	\begin{equation}\label{eq:controlledfilteringprocessinf}
		\begin{dcases}
			\frac{\dd}{\dd t} \pi^{\mu,\bfu}_t(\omega) = \int_U F(\pi^{\mu,\bfu}_t(\omega), \fru) \, u_t(\omega \, ; \dd \fru), \quad t \in [\tau_n(\omega), \tau_{n+1}(\omega)), \, n \in \N_0 \\
			\pi^{\mu,\bfu}_0(\omega) = H_{Y_0(\omega)}[\mu] \\
			\pi^{\mu,\bfu}_{\tau_n}(\omega) = H_{Y_{\tau_n(\omega)}(\omega)}\bigl[\Lambda\bigl(\pi_{\tau_n^-(\omega)}^{\mu, \bfu}(\omega), u_{\tau_n^-(\omega)}(\omega)\bigr)\bigr], \, n \in \N.
		\end{dcases}
	\end{equation}
	where $F$ is the vector field defined in (\ref{eq:globalvectorfieldinf}), $H$ is the operator given in Definition \ref{def:operatorH}, $\Lambda \colon \Delta_e \times \cP(U) \to \cM_+(I)$ is defined as
	\begin{equation}\label{eq:operatorLambdainf}
		\Lambda(\nu, u) \coloneqq \ind_{h^{-1}(y)^c}(z) \int_I \int_U \lambda(x, \fru, \dd z) \, u(\dd \fru) \, \nu(\dd x), \quad \nu \in \Delta_y, \, u \in \cP(U)
	\end{equation} 
	and the quantity $\pi_{\tau_n^-(\omega)}^{\mu,\bfu}(\omega)$ is defined as
	$$\pi_{\tau_n^-(\omega)}^{\mu,\bfu}(\omega) \coloneqq \lim_{t \to \tau_n(\omega)^-} \pi_t^{\mu,\bfu}(\omega), \quad \text{on } \{\omega \in \Omega \colon \tau_n(\omega) < +\infty\}.$$
	
	Then, $\pi^{\mu,\bfu}$ is $(\cY_t^\circ)_{t \geq 0}$-adapted and is a modification of the filtering process,
	\ie 
	\begin{equation*}
		\pi_t^{\mu,\bfu}(A) = \p_\mu^\bfu(X_t \in A \mid \cY_t^{\mu, \bfu}), \quad \p_\mu^\bfu\text{-a.s.}, \, t \geq 0, \, A \in \cI.
	\end{equation*}
\end{theorem}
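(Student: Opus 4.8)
The plan is to reproduce, in the controlled setting, the chain of arguments of Sections~\ref{sec:filtering}--\ref{sec:filterchar}, treating the $(\cY_t^{\mu,\bfu})_{t\ge0}$-predictable control $\bfu$ as a coefficient that may be ``frozen'' whenever one conditions on the observation filtration. First I would settle well-posedness of \eqref{eq:controlledfilteringprocessinf}: by Remark~\ref{rem:controlrepr} the restriction of $\bfu$ to each stochastic interval $[\tau_n(\omega),\tau_{n+1}(\omega))$ is, for each $\omega$, a Borel map $t\mapsto u_n(t,\dots)$ with values in $\cP(U)$, and since $Y\equiv Y_{\tau_n(\omega)}$ there, Theorem~\ref{th:PDPflowinf} provides on that interval a unique strong solution of $\dot z(t)=\int_U F(z(t),\fru)\,u_t(\dd\fru)$ issued from the prescribed datum ($H_{Y_0}[\mu]$ when $n=0$, and $H_{Y_{\tau_n}}[\Lambda(\pi_{\tau_n^-}^{\mu,\bfu},u_{\tau_n^-})]$ when $n\ge1$), which moreover stays in $\Delta_{Y_{\tau_n}}$. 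Concatenating these pieces and using that $Y$ is $\p_\mu^\bfu$-a.s.\ non-explosive (Remark~\ref{rem:Xnonexplosive}, so $\tau_n\uparrow+\infty$) gives a unique c\'adl\'ag $\Delta_e$-valued process on $[0,+\infty)$. Adaptedness then follows from this very construction: on $[\tau_n,\tau_{n+1})$ one has $\pi_t^{\mu,\bfu}=\phi_{\pi_{\tau_n}^{\mu,\bfu}}^{m_n}(t-\tau_n)$ with $m_n$ the control coefficient on that interval, a measurable function of $(Y_0,\tau_1,Y_{\tau_1},\dots,\tau_n,Y_{\tau_n})$, so that, using the joint measurability of the controlled flow (Remark~\ref{rem:flowinf}) and of the maps $H$ and $\Lambda$, a straightforward induction on $n$ shows $\pi_t^{\mu,\bfu}$ is $\cY_t^\circ$-measurable for every $t\ge0$.

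For the identification with the filter $\hat\pi_t(\phi):=\e_\mu^\bfu[\phi(X_t)\mid\cY_t^{\mu,\bfu}]$, $\phi\in\dB_b(I)$, I would follow the proofs of Lemma~\ref{lemma:dualpredproj}, the Proposition preceding Theorem~\ref{thm:filteringequation}, and Theorems~\ref{thm:filteringequation} and~\ref{thm:filteringproc} \emph{mutatis mutandis}, replacing throughout the rate kernel $\lambda(x,\dd z)$ by the controlled rate $\int_U\lambda(x,\fru,\dd z)\,u_t(\dd\fru)$. Concretely: (i)~the $(\cX_t^\circ)$- and $(\cY_t^{\mu,\bfu})$-compensators of the RCM $m$ of $Y$ are $\int_U\lambda(X_{t-},\fru,h^{-1}(\cdot\setminus\{Y_{t-}\}))\,u_t(\dd\fru)\,\dd t$ and its $\hat\pi_{t-}$-averaged version, the integrability needed for \citep[Prop.~5.3]{jacod:mpp} being supplied by point~\ref{assumption:lambdabdd} of Assumption~\ref{assumption:lambdainf}; (ii)~Dynkin's formula for the controlled generator $\cL^{u_t}f(x)=\int_U\int_I[f(z)-f(x)]\,\lambda(x,\fru,\dd z)\,u_t(\dd\fru)$ gives a semimartingale representation of $\phi(X_t)$; (iii)~the innovations/martingale-representation theorem \citep[Ch.~VIII,~Th.~T9]{bremaud:pp} then produces a Kushner--Stratonovich equation for $\hat\pi(\phi)$ whose predictable jump field, after a freezing-lemma disintegration against $\hat\pi_{t-}$, is identified at each $\tau_n$ with $H_{Y_{\tau_n}}[\Lambda(\hat\pi_{\tau_n^-},u_{\tau_n^-})](\phi)$, $\Lambda$ as in \eqref{eq:operatorLambdainf}; and (iv)~passing to the adjoint exactly as in Theorem~\ref{thm:filteringproc}, between jumps $\hat\pi$ solves $\dot{\hat\pi}_t=\int_U F(\hat\pi_t,\fru)\,u_t(\dd\fru)$. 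Thus $\hat\pi$ is an $(\cY_t^{\mu,\bfu})$-adapted solution of \eqref{eq:controlledfilteringprocessinf}, and the uniqueness obtained above---applied after testing against a countable measure-determining family of $\phi\in\dB_b(I)$---forces $\hat\pi_t=\pi_t^{\mu,\bfu}$, $\p_\mu^\bfu$-a.s., for every $t\ge0$, which is the assertion.

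The hard part will be step (iii) in the controlled setting: one must check that inserting the $(\cY_t^{\mu,\bfu})$-predictable control into both the $\cX$-compensator of $m$ and the generator $\cL^{u_t}$ does not spoil the hypotheses of the martingale-representation theorem---it does not, since $\sup_{(x,u)}\lambda(x,u)<+\infty$ keeps all the relevant random fields bounded and $\p_\mu^\bfu$-integrable---and that the freezing lemma may legitimately be applied to $u_t$ together with $Y_{t-}$ and $\hat\pi_{t-}$ when disintegrating the $\cX$-compensator over the conditional law of $X_{t-}$; this is exactly what collapses the controlled rate to the form $\Lambda(\hat\pi_{t-},u_{t-})$ appearing in \eqref{eq:operatorLambdainf}. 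The only remaining subtlety, the patching of the piecewise ODE at the accumulation of jump times, is dealt with by non-explosiveness.
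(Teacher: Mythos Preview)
Your proposal is correct and follows exactly the route the paper indicates: the paper omits the proof of this theorem, stating explicitly that ``their proofs are mere adaptations of those already provided'' in Sections~\ref{sec:filtering}--\ref{sec:filterchar}, and your outline is precisely such an adaptation---replacing $\lambda(x,\dd z)$ by the relaxed controlled rate $\int_U\lambda(x,\fru,\dd z)\,u_t(\dd\fru)$ throughout Lemma~\ref{lemma:dualpredproj}, the Kushner--Stratonovich derivation, and Theorems~\ref{thm:filteringequation}--\ref{thm:filteringproc}, then invoking Theorem~\ref{th:PDPflowinf} for piecewise well-posedness and Remark~\ref{rem:Xnonexplosive} for the concatenation.
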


\begin{rem}
	Thanks to the structure of admissible controls shown in (\ref{eq:controlsrepresentationinf}) we can write (\ref{eq:controlledfilteringprocessinf}) as
	\begin{equation*} \label{eq:controlledfilteringprocess2inf}
		\begin{dcases}
		\frac{\dd}{\dd t} \pi^{\mu,\bfu}_t = \int_U F(\pi^{\mu,\bfu}_t, \fru) \, u_n(t,Y_0, \dots, \tau_n, Y_{\tau_n} \, ; \dd \fru), \quad t \in [\tau_n, \tau_{n+1}), \, n \in \N_0, \\
		\pi^{\mu,\bfu}_0 = H_{Y_0}[\mu], \\
		\pi^{\mu,\bfu}_{\tau_n} = H_{Y_{\tau_n}}\bigl[\Lambda\bigl(\pi_{\tau_n^-}^{\mu, \bfu}, u_{n-1}(\tau_n^-, Y_0, \dots, \tau_{n-1}, Y_{\tau_{n-1}})\bigr)\bigr], \, n \in \N.
		\end{dcases}
	\end{equation*}
\end{rem}

The filtering process is again a \emph{Piecewise Deterministic Process} (PDP). Notice that,in general, it is no more a Markov process, due to the past-dependent structure of admissible controls. Its characteristic triple $(F, r, R)$ is given by the controlled vector field $F$ defined in (\ref{eq:globalvectorfieldinf}), a controlled jump rate function $r \colon \Delta_e \times U \to [0, +\infty)$ and a controlled stochastic kernel $R$, \ie a probability transition kernel from $(\Delta_e \times U, \cB(\Delta_e) \otimes \cU)$ to $(\Delta_e, \cB(\Delta_e))$.
More precisely
\begin{equation} \label{eq:PDPcharacteristictripleinf}
\begin{split}
	F(\nu, u) &\coloneqq F_y(\nu, u) = \cB_y^u \nu - \nu \, \cB_y^u \nu(I), \quad \nu \in \Delta_y, \, u \in U,\\
	r(\nu, u) &\coloneqq - \cB_y^u \nu(I) = \int_I \lambda\bigl(x, u,  h^{-1}(y)^c\bigr) \, \nu(\dd x), \quad \nu \in \Delta_y, \, u \in U, \\
	R(\nu, u, D) &\coloneqq \int_O \ind_D\bigl(H_\upsilon[\Lambda(\nu, u)]\bigr) \, \rho(\nu, u, \dd \upsilon), \quad \nu \in \Delta_y, \, D \in \cB(\Delta_e), \, u \in U,
\end{split}
\end{equation}
where $\rho$ is a transition probability from $\bigl(\Delta_e \times U, \cB(\Delta_e) \otimes \cU\bigr)$ into $(O, \cO)$ defined for all $\nu \in \Delta_y, \, u \in U$ and all $B \in \cO$ as
\begin{equation}\label{eq:rhotransitioninf}
	\rho(\nu, u, B) \coloneqq 
	\begin{dcases}
		\frac{1}{r(\nu, u)} \int_I \lambda\bigl(x, u, h^{-1}(B \setminus \{y\})\bigr) \, \nu(\dd x), &\text{if } r(\nu, u) > 0, \\
		q_y(B), &\text{if } r(\nu, u) = 0,
	\end{dcases}
\end{equation}
where $(q_y)_{y \in O}$ is a family of probability measures, each concentrated on the level set $h^{-1}(y), \, y \in O$, whose exact values are irrelevant.
Since for any given $\nu \in \Delta_y$ and $u \in U$ the probability $\rho(\nu, u, \cdot)$ is concentrated on the set $O \setminus \{y\}$, the probability $R(\nu, u, \cdot)$ is concentrated on $\Delta_e \setminus \Delta_y$.

\begin{rem}
Under Assumption \ref{assumption:lambdainf} we have that $\sup_{(\rho, u) \in \Delta_e \times U} |r(\rho, u)| \leq C_r$ for some $C_r > 0$.
\end{rem}

\begin{theorem} \label{th:filteringprocessPDPinf}
	For every $\nu \in \Delta_e$ and all $\bfu \in \cU_{ad}$ 
	the filtering process $\pi^{\nu,\bfu} = (\pi^{\nu,\bfu}_t)_{t \geq 0}$
	defined on the probability space $(\Omega, \cX^\circ, \p_\nu^\bfu)$ and taking values in $\Delta_e$
	is a controlled \emph{Piecewise Deterministic Process} with respect to the triple $(F, r, R)$ defined in (\ref{eq:PDPcharacteristictripleinf}) and with starting point $\nu$.
	
	More specifically, we have that for all $n \in \N_0$ and $\p_\nu^\bfu\text{-a.s.}$
	\begin{equation}\label{eq:controlledflowinf}
		\pi_t^{\nu, \bfu} = \phi_{\pi^{\nu, \bfu}_{\tau_n}}^{u_n}(t-\tau_n), \quad \text{on } \{\tau_n < +\infty\}, \, t \in [\tau_n, \tau_{n+1})
	\end{equation}
	\begin{multline}\label{eq:controlledsojourntimesinf}
		\p_\nu^\bfu(\tau_{n+1} - \tau_n > t, \, \tau_n < +\infty \mid \cY^{\nu,\bfu}_{\tau_n}) = \\
		\ind_{\tau_n < +\infty} \exp\biggl\{-\int_0^t \int_U r\bigl(\phi_{\pi^{\nu,\bfu}_{\tau_n}}^{u_n(\cdot \, + \, \tau_n)}(s), \fru\bigr) \, u_n(s + \tau_n \, ; \dd \fru) \, \dd s\biggr\}, \quad t \geq 0
	\end{multline}
	\begin{multline}\label{eq:controlledpostjumplocationsinf}
		\p_\nu^\bfu(\pi^{\nu, \bfu}_{\tau_{n+1}} \in D, \, \tau_{n+1} < +\infty \mid \cY^{\nu,\bfu}_{\tau_{n+1}^-}) = \\
		\ind_{\tau_{n+1} < +\infty} \int_U R\bigl(\phi_{\pi^{\nu, \bfu}_{\tau_n}}^{u_n(\cdot \, + \, \tau_n)}(\tau_{n+1}^- - \tau_n), \fru; D\bigr) \, u_n(\tau_{n+1}^- \, ; \dd \fru), \quad D \in \cB(\Delta_e)
	\end{multline}
	where, for each $n \in \N_0$, $\phi_{\pi^{\nu, \bfu}_{\tau_n}}^{u_n}$ is the flow starting from $\pi^{\nu, \bfu}_{\tau_n}$ and determined by the controlled vector field $F$ under the action of the control function $u_n(\cdot, Y_0, \dots, \tau_n, Y_{\tau_n})$.
\end{theorem}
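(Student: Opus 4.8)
The plan is to follow, \emph{mutatis mutandis}, the proof of Theorem~\ref{th:uncontrolledfilteringprocessPDP}, inserting at each step the bookkeeping forced by the path-dependent relaxed controls. First I would settle the starting point and the flow. Fix $\nu \in \Delta_e$, say $\nu \in \Delta_y$; since $X_0$ has law $\nu$, concentrated on $h^{-1}(y)$, we get $\p_\nu^\bfu(Y_0 = y) = 1$ and $H_y[\nu] = \nu$, whence $\pi_0^{\nu,\bfu} = \nu$, $\p_\nu^\bfu$-a.s. For the flow identity~(\ref{eq:controlledflowinf}), recall from the representation~(\ref{eq:controlsrepresentationinf}) that on $[\tau_n, \tau_{n+1})$ the control equals $u_n(\cdot, Y_0, \tau_1, \dots, \tau_n, Y_{\tau_n})$, so by~(\ref{eq:controlledfilteringprocessinf}) the shifted trajectory $t \mapsto \pi^{\nu,\bfu}_{t+\tau_n}$ solves on $[0, \tau_{n+1}-\tau_n)$ the ODE $\dot z(t) = \int_U F(z(t), \fru)\, u_n(t+\tau_n\,;\dd\fru)$ with $z(0) = \pi^{\nu,\bfu}_{\tau_n}$, and the uniqueness in Theorem~\ref{th:PDPflowinf} identifies $z(t)$ with $\phi^{u_n(\cdot + \tau_n)}_{\pi^{\nu,\bfu}_{\tau_n}}(t)$.

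Next I would reconstruct the law of $Y$, starting with the controlled analogue of Lemma~\ref{lemma:dualpredproj}. Writing $m\bigl((0,t]\times B\bigr) = \int_{(0,t]\times I} \ind_{h^{-1}(B\setminus\{Y_{s-}\})}(z)\, n(\dd s\,\dd z)$, using that the $\bbX$-compensator of $n$ under $\p_\nu^\bfu$ is $\int_U \lambda(X_{s-},\fru,\dd z)\, u_s(\dd\fru)\,\dd s$ from~(\ref{eq:predprojinf}) (finite by point~\ref{assumption:lambdabdd} of Assumption~\ref{assumption:lambdainf}), and projecting onto $\bbY$ via the freezing lemma and Fubini--Tonelli, one obtains the $\bbY$-compensator $\hat\mu^\bfu_t(\dd y)\,\dd t$ with $\hat\mu^\bfu_t(B) = \int_I \int_U \lambda\bigl(x,\fru,h^{-1}(B\setminus\{Y_{t-}\})\bigr)\, u_t(\dd\fru)\, \pi^{\nu,\bfu}_{t-}(\dd x)$; the integrands are $\bbY$-predictable because $\bfu$ is. Then I would mimic the proof of Proposition~\ref{prop:lawY}: for fixed $n$, $Z_T(B) := \ind_{\tau_{n+1}\leq T}\ind_{Y_{\tau_{n+1}}\in B} = m\bigl((0,T\wedge\tau_{n+1}]\times B\bigr) - m\bigl((0,T\wedge\tau_n]\times B\bigr)$ has $\bbY$-compensator $\zeta_T(B) = \int_0^T \ind_{\tau_n\leq s<\tau_{n+1}}\,\hat\mu^\bfu_s(B)\,\dd s$; the stopped compensated process is a uniformly integrable $\bbY$-martingale (cf. \citep[(2.6)]{jacod:mpp}), so Doob's optional sampling at $\tau_n$ yields $\e_\nu^\bfu[Z_T(B)\mid\cY^{\nu,\bfu}_{\tau_n}] = \e_\nu^\bfu[\zeta_T(B)\mid\cY^{\nu,\bfu}_{\tau_n}]$. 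Substituting $\pi^{\nu,\bfu}_s = \phi^{u_n(\cdot+\tau_n)}_{\pi^{\nu,\bfu}_{\tau_n}}(s-\tau_n)$ and $Y_s = Y_{\tau_n}$ for $s\in[\tau_n,\tau_{n+1})$, changing variables $s\mapsto s-\tau_n$, and exchanging the conditional expectation with the time integral through a regular version $G^n$ of $\p_\nu^\bfu(\tau_{n+1}-\tau_n\in\cdot,\,Y_{\tau_{n+1}}\in\cdot\mid\cY^{\nu,\bfu}_{\tau_n})$ and the function $g^n(\omega,t) := G^n(\omega,(t,+\infty],O)$ --- which is $\bigl(\cY^{\nu,\bfu}_{\tau_n}\otimes\cB([0,+\infty))\bigr)$-measurable, right-continuous and non-increasing in $t$, and a version of $\p_\nu^\bfu(\tau_{n+1}-\tau_n > t\mid\cY^{\nu,\bfu}_{\tau_n})$ --- one arrives, via \citep[Ch. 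VIII, Th. T7]{bremaud:pp}, at the renewal equation for $g^n$ and hence at the $\omega$-by-$\omega$ ODE $\dot g^n(t) = -g^n(t)\int_U r\bigl(\phi^{u_n(\cdot+\tau_n)}_{\pi^{\nu,\bfu}_{\tau_n}}(t),\fru\bigr)\, u_n(t+\tau_n\,;\dd\fru)$ with $g^n(0)=1$, whose solution is precisely the exponential in~(\ref{eq:controlledsojourntimesinf}).

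For the post-jump locations~(\ref{eq:controlledpostjumplocationsinf}) I would compute $\p_\nu^\bfu(Y_{\tau_{n+1}}\in\dd\upsilon\mid\cY^{\nu,\bfu}_{\tau_{n+1}^-})$ from \citep[Ch. VIII, T7 and T16]{bremaud:pp} in terms of $\hat\mu^\bfu$, the denominator being $\p_\nu^\bfu$-a.s. positive on $\{\tau_{n+1}<+\infty\}$ by \citep[Ch. VIII, Th. T12]{bremaud:pp}, and then feed this into the jump rule $\pi^{\nu,\bfu}_{\tau_{n+1}} = H_{Y_{\tau_{n+1}}}\bigl[\Lambda(\pi^{\nu,\bfu}_{\tau_{n+1}^-}, u_{\tau_{n+1}^-})\bigr]$; since $\Lambda\bigl(\pi^{\nu,\bfu}_{\tau_{n+1}^-}, u_{\tau_{n+1}^-}\bigr) = \Lambda\bigl(\phi^{u_n(\cdot+\tau_n)}_{\pi^{\nu,\bfu}_{\tau_n}}(\tau_{n+1}^--\tau_n), u_{\tau_{n+1}^-}\bigr)$ is $\cY^{\nu,\bfu}_{\tau_{n+1}^-}$-measurable, the freezing lemma gives $\p_\nu^\bfu(\pi^{\nu,\bfu}_{\tau_{n+1}}\in D\mid\cY^{\nu,\bfu}_{\tau_{n+1}^-}) = \int_O \ind_D\bigl(H_\upsilon[\Lambda(\cdot,u_{\tau_{n+1}^-})]\bigr)\,\p_\nu^\bfu(Y_{\tau_{n+1}}\in\dd\upsilon\mid\cY^{\nu,\bfu}_{\tau_{n+1}^-})$. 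It then remains to rewrite the right-hand side by means of the definitions~(\ref{eq:PDPcharacteristictripleinf})--(\ref{eq:rhotransitioninf}) of $R$ and $\rho$, the affine dependence of $\Lambda(\nu,\cdot)$ (see~(\ref{eq:operatorLambdainf})) and $r(\nu,\cdot)$ on the control, and the identity $r(\nu,\fru)\,\rho(\nu,\fru,\dd\upsilon) = \Lambda(\nu,\delta_\fru)\circ h^{-1}(\dd\upsilon)$ (the controlled counterpart of Remark~\ref{rem:rpositive}), so as to recognise it as the mixture $\int_U R\bigl(\phi^{u_n(\cdot+\tau_n)}_{\pi^{\nu,\bfu}_{\tau_n}}(\tau_{n+1}^--\tau_n),\fru;D\bigr)\,u_n(\tau_{n+1}^-\,;\dd\fru)$ appearing in~(\ref{eq:controlledpostjumplocationsinf}).

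The main obstacle is that, in contrast with Theorem~\ref{th:uncontrolledfilteringprocessPDP}, there is no shortcut through a Markov/transition-kernel argument such as Proposition~\ref{prop:filtermarkov}: the path-dependence of admissible controls genuinely breaks the Markov property, so every identity must be proved directly at the level of conditional expectations given $\cY^{\nu,\bfu}_{\tau_n}$ or $\cY^{\nu,\bfu}_{\tau_{n+1}^-}$, with the control representation~(\ref{eq:controlsrepresentationinf}) used to pin down the inter-jump flow $\phi^{u_n}$. Within this, the two more delicate points are the licit exchange of the conditional expectation with the time integral (the $g^n$-construction, which relies on the regularity of $G^n$) and the relaxed-control bookkeeping at the jump time, namely reconciling the relaxed control that enters $\Lambda$ in the jump rule with the point-control form of $R$ in the characteristic triple~(\ref{eq:PDPcharacteristictripleinf}); the affine dependence of $\Lambda$ and $r$ on the control is exactly what makes this reconciliation work.
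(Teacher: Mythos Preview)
Your proposal is correct and matches the paper's intent: the paper explicitly omits the proof of this theorem, stating that it is a ``mere adaptation'' of the uncontrolled results (Lemma~\ref{lemma:dualpredproj}, Proposition~\ref{prop:lawY}, Theorem~\ref{th:uncontrolledfilteringprocessPDP}), and you have carried out precisely that adaptation, correctly tracking the path-dependent relaxed controls through the compensator computation, the $g^n$-renewal argument for the sojourn times, and the freezing-lemma step for the post-jump locations. Your identification of the two delicate points (the exchange of conditional expectation with the time integral, and the reconciliation of the relaxed-control jump rule with the point-control form of $R$) is apt and goes slightly beyond what the paper spells out.
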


\begin{rem}
	In optimal control problems for PDPs (see \eg \citep{davis:markovmodels}) it is customary to define the class of admissible controls as \emph{piecewise open-loop controls}. These control functions, first studied by Vermes in \citep{vermes:optcontrol}, depend at any time $t \geq 0$ on the position of the PDP at the last jump-time prior to $t$ and on the time elapsed since the last jump. 
	
	In (\ref{eq:admissiblecontrolsinf}) we specified a different class of admissible controls, more suited to our problem and imposed by the fact that we are dealing with partial observation, hence equations (\ref{eq:controlledflowinf})-(\ref{eq:controlledpostjumplocationsinf}) are changed with respect to the standard formulation with piecewise open-loop controls.

	Another difference with respect to the usual definition of a PDP is the absence of a boundary behavior, \ie the specification of a transition kernel giving the post-jump position of the process in case it touches the boundary.
\end{rem}

Now that we have a precise description of the filtering process, we can use it to rewrite the cost functional $J$ defined in (\ref{eq:costfunctionalinf}). In fact, since control processes $\bfu$ are 
$(\cY_t^\circ)_{t \geq 0}$~-~predictable and we know that the filtering process $\pi^{\mu, \bfu}$
provides us with the conditional law of $X_t$ given $\cY_t^{\mu,\bfu}$, for all $t \geq 0$, an easy application of the Fubini-Tonelli Theorem, of the freezing lemma and Remark \ref{rem:controlrepr} shows that
\begin{equation} \label{eq:functionalJdiscreteinf}
\begin{split}
&\quad \, \, J(\mu, \bfu) = \e_\mu^{\bfu} \biggl[ \int_0^\infty e^{-\beta t} \int_U \pi_t^{\mu, \bfu}(f; \, \fru) \, u_t(\dd \fru) \, \dd t \biggr] \\
&=\e_\mu^\bfu
\biggl[ \sum_{n = 0}^{+\infty} \int_{\tau_n}^{\tau_{n+1}} e^{-\beta t} \int_U \pi^{\mu, \bfu}_t(f; \, \fru) \, u_n(t \, ; \dd \fru) \, \dd t \biggr] \\
&= \e_\mu^\bfu
\biggl[ \sum_{n = 0}^{+\infty} e^{-\beta \tau_n} \int_0^{+\infty} e^{-\beta t} \chi_{\pi^{\mu, \bfu}_{\tau_n}}^{u_n(\cdot \, + \, \tau_n)}(t) \int_U \phi_{\pi^{\mu, \bfu}_{\tau_n}}^{u_n(\cdot \, + \, \tau_n)}(f; \, \fru, t) \, u_n(t+\tau_n \, ; \dd \fru) \, \dd t \biggr]\\
&= \e_\mu^\bfu
\biggl[\sum_{n = 0}^{+\infty} e^{-\beta \tau_n} g\bigl(\pi^{\mu,\bfu}_{\tau_n}, u_n(\cdot+\tau_n, Y_0, \tau_1, Y_{\tau_1}, \dots, \tau_n, Y_{\tau_n})\bigr) \biggr]
\end{split}
\end{equation}
where the function $g$, that will be defined precisely in \eqref{eq:costfunctiondiscreteinf}, represents the double integral appearing in the third line and $\chi_{\pi^{\mu, \bfu}_{\tau_n}}^{u_n(\cdot \, + \, \tau_n)}$ is the survival distribution appearing in (\ref{eq:controlledsojourntimesinf}). The notations $\phi_\nu^m(f; \, u, t) \coloneqq \int_I f(x,u) \, \phi_\nu^m(t; \, \dd x)$ and $\pi_t^{\mu, \bfu}(f; \, u) \coloneqq \int_I f(x,u) \, \pi_t^{\mu,\bfu}(\dd x)$ are adopted.

At this point, as is customary in optimal control problem with partial observation, a reformulation of the original optimal control problem is necessary. We have to introduce a \emph{separated} control problem with full observation, having as new state variable the filtering process. More specifically, as \eqref{eq:functionalJdiscreteinf} suggests, we can address a discrete-time problem for the process given by the pair of jump times and post-jump locations of the filtering process. We will show that the original control problem and the separated one are equivalent, providing also a precise description of the relationship between their respective control strategies.

\subsection{The separated optimal control problem}\label{sec:pdpoptcontrol}
In this Subsection we will reformulate the original optimal control problem into a discrete-time one based on the filtering process. This reformulation will fall in the framework of \citep{bertsekas:stochoptcontrol} (from which we will borrow some terminology), a fact that enables us to use known results to study the value function $V$ defined in (\ref{eq:valuefunctioninf}). We will prove the equivalence between the original control problem and the separated one. In particular, we will show that the value function $V$ can be indirectly studied by its counterpart in the separated problem, thanks to the equality proved in Theorem \ref{th:valuefunctionsidentifinf}. We will prove that the value function of the separated problem is lower semicontinuous and we will characterize it as the unique fixed point of a suitable contraction mapping in Theorem \ref{th:vfixedpoint}.

Let us now introduce the separated problem, starting to choose the \emph{action space} as the space of \emph{relaxed controls}
\begin{equation} \label{eq:actionspaceinf}
\cM = \{m \colon [0, +\infty) \to \cP(U), \text{ measurable}\}.
\end{equation}
We recall that $\cM$ is compact under the \emph{Young topology} (see \eg \citep{davis:markovmodels}).
The set of \emph{ordinary controls}
\begin{equation}\label{eq:ordinarycontrolsinf}
A = \{\alpha \colon [0, +\infty) \to U, \text{ measurable}\}
\end{equation}
can be identified as a subset of $\cM$ via the function $t \mapsto \delta_{\alpha(t)}$, $\alpha \in A$, where $\delta_u$ denotes the Dirac probability measure concentrated at the point $u \in U$. Thanks to \citep[Lemma 1]{yushkevich:jumpmodel}, $A$ is a \emph{Borel space} when endowed with the coarsest $\sigma$-algebra such that the maps
\begin{equation*}
\alpha \mapsto \int_0^{+\infty} e^{-t} \psi(t, \alpha(t)) \, \dd t
\end{equation*}
are measurable for all $\psi \colon [0, +\infty) \times U \to \R$, bounded and measurable.
The class of \emph{admissible policies} $\cA_{ad}$ for the separated problem is given by
\begin{equation}
\cA_{ad} = \{\bfa = (a_n)_{n \in \bar \N_0}, a_n \colon \Delta_e \times \bigl((0, +\infty) \times \Delta_e\bigr)^n \to \cM \text{ measurable } \forall n \in \bar \N_0\}.
\end{equation}

In the separated problem the state to be controlled is represented by the filtering process, therefore we put ourselves in a canonical framework for this process.
\begin{itemize}
	\item $\bar{\Omega} = \{\bar{\omega} \colon [0, +\infty) \to \Delta_e, \text{ c\'adl\'ag}\}$ denotes the canonical space for \mbox{$\Delta_e$~-~valued} PDPs. We define $\bar{\pi}_t(\bar{\omega}) = \bar{\omega}(t)$, for $\bar{\omega} \in \bar{\Omega}$, $t \geq 0$, and
	\begin{align*}
	\bar \tau_0(\bar \omega) &= 0, \\
	\bar \tau_n(\bar \omega) &= \inf\{t > \bar \tau_{n-1}(\bar \omega) \text{ s.t. } \bar \pi_t(\bar \omega) \ne \bar \pi_{t^-}(\bar \omega)\}, \quad n \in \N, \\
	\bar \tau_\infty(\bar \omega) &= \lim_{n \to \infty} \bar \tau_n(\bar \omega).
	\end{align*}
	\item The family of $\sigma$-algebras $(\bar{\cF}_t^\circ)_{t \geq 0}$ given by
	\begin{equation*}
	\bar{\cF}_t^\circ = \sigma(\bar{\pi}_s, 0 \leq s \leq t), \quad
	\bar{\cF}^\circ = \sigma(\bar{\pi}_s, s \geq  0),
	\end{equation*}
	is the natural filtration of the process $\bar{\pi} = (\bar{\pi}_t)_{t \geq 0}$.
	\item For every $\nu \in \Delta_e$ and all $\bfa \in \cA_{ad}$ we denote by $\bar \p_\nu^\bfa$ the probability measure on $(\bar \Omega, \bar{\cF}^\circ)$ such that the process $\bar \pi$ is a PDP, starting from the point $\nu$ and with characteristic triple $(F, r, R)$. With this, we mean that for all $n \in \N_0$ and $\bar \p_\nu^\bfa$-a.s.
	\begin{equation}\label{eq:controlledflowPDPinf}
	\bar \pi_t = \phi_{\bar \pi_{\bar \tau_n}}^{a_n}(t-\bar \tau_n), \quad \text{on } \{\bar \tau_n < +\infty\}, \, t \in [\bar \tau_n, \bar \tau_{n+1}).
	\end{equation}
	\begin{multline}\label{eq:controlledsojourntimesPDPinf}
	\bar \p_\nu^\bfa(\bar \tau_{n+1} - \bar \tau_n > t, \, \bar \tau_n < +\infty \mid \bar \cF^\circ_{\bar \tau_n}) = \\
	\ind_{\bar \tau_n < +\infty} \exp\biggl\{-\int_0^t \int_U r(\phi_{\bar \pi_{\bar \tau_n}}^{a_n}(t),\fru) \, a_n(s \, ; \dd \fru) \, \dd s\biggr\}, \quad t \geq 0.
	\end{multline}
	\begin{multline}\label{eq:controlledpostjumplocationsPDPinf}
	\bar \p_\nu^\bfa(\bar \pi_{\bar \tau_{n+1}} \in D, \, \bar \tau_{n+1} < +\infty \mid \bar \cF^\circ_{\bar \tau_{n+1}^-}) = \\
	\ind_{\bar \tau_{n+1} < +\infty} \int_U R(\phi_{\bar \pi_{\bar \tau_n}}^{a_n}(\bar \tau_{n+1}^- - \bar \tau_n), \fru; D) \, a_n(\bar\tau_{n+1}^- - \bar\tau_n \, ; \dd \fru),  \quad D \in \cB(\Delta_e).
	\end{multline}
	where we simplified the notation by indicating $a_n = a_n(\bar \pi_0, \dots, \bar \tau_n, \bar \pi_{\bar \tau_n})$ and, for each $n \in \N_0$, we denoted by $\phi_{\bar \pi_{\bar \tau_n}}^{a_n}$ the flow starting from $\bar \pi_{\bar \tau_n}$ and determined by the controlled vector field $F$ under the action of the relaxed control $a_n(\bar \pi_0, \dots, \bar \tau_n, \bar \pi_{\bar \tau_n})$.
	We recall that the probability measure $\bar \p_\nu^\bfa$ always exists by the canonical construction of a PDP.
	\item For every $Q \in \cP(\Delta_e)$ and every $\bfa \in \cA_{ad}$ we define a probability $\bar{\p}_Q^\bfa$ on $(\bar{\Omega}, \bar{\cF^\circ})$ by 
	$\bar{\p}_Q^\bfa(C) = \int_{\Delta_e} \bar{\p}_\nu^\bfa(C) \, Q(\dd\nu)$ for $C \in \bar{\cF^\circ}$. This means that $Q$ is the initial distribution of $\bar{\pi}$ under $\bar{\p}_Q^\bfa$.
	\item Let $\bar\cF^{Q, \bfa}$ be the $\bar{\p}_Q^\bfa$-completion of $\bar\cF^\circ$. We still denote by $\bar{\p}_Q^\bfa$ the measure naturally extended to this new $\sigma$-algebra.
	Let $\bar{\cZ}^{Q, \bfa}$ be the family of sets in $\bar{\cF}^{Q, \bfa}$ with zero $\bar{\p}_Q^\bfa$-probability and define
	\begin{equation*}
	\bar{\cF}_t^{Q, \bfa} = \sigma(\bar{\cF}_t^\circ \cup \bar{\cZ}^{Q,\bfa}), \quad 
	\bar\cF_t = \intertwo{Q \in \cP(\Delta_e)}{\bfa \in \cA_{ad}} \bar{\cF}_t^{Q,\bfa}, \quad
	t \geq 0.
	\end{equation*}
	$(\bar{\cF}_t)_{t\geq 0}$ is called the \emph{natural completed filtration} of $\bar{\pi}$.
	By a slight generalization of Theorem \citep[Th. 25.3]{davis:markovmodels} it is \mbox{right-continuous}.
\end{itemize}
The PDP $(\bar{\Omega}, \bar{\cF}, (\bar{\cF}_t)_{t \geq 0}, (\bar{\pi}_t)_{t \geq 0}, (\bar{\p}_\nu^\bfa)_{\nu \in \Delta_e}^{\bfa \in \cA_{ad}})$ constructed as above admits the controlled characteristic triple $(F, r, R)$ defined in (\ref{eq:PDPcharacteristictripleinf}).
To simplify the notation, let us introduce the function $\chi_\rho^m$, depending on $\rho \in \Delta_e$ and $m \in \cM$, given by
\begin{equation}
\chi_\rho^m(t) = \exp\biggl\{-\int_0^t \int_U r(\phi_\rho^m(s), \fru) \, m(s \, ; \dd \fru) \, \dd s\biggr\}, \quad t \geq 0.
\end{equation}
In this way, with a slight abuse of notation, we can write (\ref{eq:controlledsojourntimesPDPinf}) as
\begin{equation*}
\bar \p_\nu^\bfa(\bar \tau_{n+1} - \bar \tau_n > t \mid \bar \cF^\circ_{\bar \tau_n}) = \chi_\nu^{a_n}(t), \quad t \geq 0, \, \text{on } \{\bar \tau_n < +\infty\}.
\end{equation*}
Notice that $\chi_\rho^m$ solves the ODE
\begin{equation} 
\begin{dcases}
\frac{\dd}{\dd t} z(t) = -z(t) \int_U r(\phi_\rho^m(t), \fru)  \, m(t \, ;\dd \fru), \quad t \geq 0, \\
z(0) = 1.
\end{dcases}
\end{equation} 

The observed process $\bar Y$ can be defined on $\bar \Omega$ as follows. Let us introduce the measurable function $\projY \colon \Delta_e \to O$ given by
\begin{equation*}
\projY(p) = y, \quad \text{if } p \in \Delta_y, \text{ for some } y \in O
\end{equation*}
and set
\begin{equation*}
\bar Y_t(\bar\omega) =
\begin{cases}
\projY(\bar\pi_0(\bar\omega)),	& t \in \bigl[0, \bar\tau_1(\bar\omega)\bigr), \\
\projY(\bar\pi_{\bar\tau_n(\bar{\omega})}(\bar\omega)),	& t \in \bigl[\bar\tau_n(\bar\omega), \bar\tau_{n+1}(\bar\omega)\bigr), \, n \in \N, \, \bar\tau_n(\bar\omega) < +\infty, \\
o_\infty,						& t \in \bigl[\bar\tau_\infty(\bar\omega), +\infty), \, \bar\tau_\infty(\bar\omega) < +\infty,
\end{cases}
\end{equation*}
where $o_\infty \in O$ is an arbitrary state, that is irrelevant to specify, since under Assumption \ref{assumption:lambdainf} for each fixed $\nu \in \Delta_e$ and $\bfa \in \cA_{ad}$ we have that $\bar \tau_\infty = +\infty$, \mbox{$\bar \p_\nu^\bfa$-a.s.} In other words, the observed process is \mbox{$\bar \p_\nu^\bfa$-a.s.} non explosive.

Next, we define the cost functional $\bar J$ associated to the separated optimal control problem. In analogy to the form of the original cost functional $J$ shown in (\ref{eq:functionalJdiscreteinf}), we define
\begin{equation} \label{eq:pdpcostfunctionalinf}
\bar J(\nu, \bfa) \coloneqq \bar{\e}_\nu^\bfa 
\biggl[ \sum_{n = 0}^{+\infty} e^{-\beta \bar \tau_n} g\bigl(\bar \pi_{\bar \tau_n}, a_n(\bar \pi_{\bar \tau_0}, \dots, \bar \tau_n, \bar \pi_{\bar \tau_n})\bigr) \biggr], \quad \nu \in \Delta_e, \, \bfa \in \cA_{ad}
\end{equation}
where $g \colon \Delta_e \times \cM \to \R$ is the \mbox{discrete-time} one-stage cost function
\begin{equation}\label{eq:costfunctiondiscreteinf}
g(\nu, m) \coloneqq \int_0^{+\infty} e^{-\beta t} \chi_\nu^m(t) \int_U \phi_\nu^m(f; \, \fru, t) \, m(t \, ; \dd \fru) \, \dd t.
\end{equation} 
Notice that we again adopted the notation $\phi_\nu^m(f; \, u, t) = \int_I f(x, u) \, \phi_\nu^m(t ; \, \dd x)$.
The optimal cost is given by the value function of the separated problem, defined as
\begin{equation}\label{eq:pdpvaluefunctioninf}
v(\nu) \coloneqq \inf_{\bfa \in \cA_{ad}} \bar J(\nu, \bfa).
\end{equation} 

A first step to prove the equivalence between the original control problem and the separated one is to establish a connection between the cost functionals $J$ and $\bar J$, respectively given in (\ref{eq:costfunctionalinf}) and (\ref{eq:pdpcostfunctionalinf}).

\begin{theorem}\label{th:costfunctionalidentifinf}
	Fix $\mu \in \cP(I)$ and let $Q \in \cP(\Delta_e)$ the Borel probability measure on $\Delta_e$ defined as
	\begin{equation} \label{eq:probabilityQinf}
	Q \coloneqq \mu \circ h^{-1} \circ \cH^{-1}, \quad \cH(y) \coloneqq H_y[\mu].
	\end{equation}
	
	For all $\bfu \in \cU_{ad}$ there exists an admissible policy $\bfa \in \cA_{ad}$ such that the laws of $\pi^{\mu, \bfu}$ under $\p_\mu^\bfu$ and of $\bar \pi$ under $\bar \p_Q^\bfa$ are the same. Moreover, for such an admissible policy
	\begin{equation} \label{eq:functionalJequalityinf}
	J(\mu, \bfu) = \int_{\Delta_e} \bar J(\nu, \bfa) \, Q(\dd \nu) = \int_O \bar J(H_y[\mu], \bfa) \, \mu \circ h^{-1}(\dd y).
	\end{equation}
	Vice versa, for all $\bfa = (a_n)_{n \in \N} \in \cA_{ad}$ there exists an admissible control $\bfu \in \cU_{ad}$ such that the same conclusions hold.
\end{theorem}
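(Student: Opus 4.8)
The plan is to exploit two structural facts and then invoke uniqueness of the law of a marked point process given its compensator. First, both an admissible control $\bfu=(u_n)_{n\in\N_0}\in\cU_{ad}$ (decomposed as in \eqref{eq:controlsrepresentationinf}) and an admissible policy $\bfa=(a_n)_{n\in\N_0}\in\cA_{ad}$ are encoded by sequences of measurable functions. Second, for the filtering process the jump times coincide with those of $Y$ and, since $\pi^{\mu,\bfu}_{\tau_n}\in\Delta_{Y_{\tau_n}}$, one has $Y_{\tau_n}=\projY(\pi^{\mu,\bfu}_{\tau_n})$ along every trajectory. Combining this with the PDP description of $\pi^{\mu,\bfu}$ in Theorem~\ref{th:filteringprocessPDPinf} and the canonical construction of $\bar\pi$ in \eqref{eq:controlledflowPDPinf}--\eqref{eq:controlledpostjumplocationsPDPinf}, I would identify the driving controls on the two sides, deduce equality of the laws, and finally read off the cost identity from the discrete-time form \eqref{eq:functionalJdiscreteinf} of $J$.

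\textbf{From controls to policies.} Given $\bfu=(u_n)_{n\in\N_0}\in\cU_{ad}$, I would set
\begin{equation*}
a_n(\rho_0,s_1,\rho_1,\dots,s_n,\rho_n)(t):=u_n\bigl(t+s_n,\projY(\rho_0),s_1,\projY(\rho_1),\dots,s_n,\projY(\rho_n)\bigr),\qquad t\geq0 .
\end{equation*}
Since $\projY\colon\Delta_e\to O$ is continuous (the sets $\Delta_y$ are clopen in $\Delta_e$, being at mutual total-variation distance $2$) and each $u_n$ is Borel, each $a_n$ is a measurable map into $\cM$, so $\bfa\in\cA_{ad}$. I would then compare the characteristics of $\pi^{\mu,\bfu}$ under $\p_\mu^\bfu$ with those of $\bar\pi$ under $\bar\p_Q^\bfa$: the law of $\pi^{\mu,\bfu}_0=H_{Y_0}[\mu]=\cH(h(X_0))$ is $\mu\circ h^{-1}\circ\cH^{-1}=Q$; both processes evolve between jumps along the flow $\phi$ driven by their current control; and since $Y_{\tau_k}=\projY(\pi^{\mu,\bfu}_{\tau_k})$, the control $u_n(\cdot+\tau_n,Y_0,\tau_1,Y_{\tau_1},\dots,\tau_n,Y_{\tau_n})$ steering $\pi^{\mu,\bfu}$ on $[\tau_n,\tau_{n+1})$ equals $a_n(\pi^{\mu,\bfu}_0,\tau_1,\pi^{\mu,\bfu}_{\tau_1},\dots,\tau_n,\pi^{\mu,\bfu}_{\tau_n})$, so the survival functions \eqref{eq:controlledsojourntimesinf} and \eqref{eq:controlledsojourntimesPDPinf} agree and likewise the post-jump kernels \eqref{eq:controlledpostjumplocationsinf} and \eqref{eq:controlledpostjumplocationsPDPinf}. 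An induction on $n$, using uniqueness of the law of a marked point process given its compensator (\citep[Th.~3.6]{jacod:mpp}), then yields equality of the laws of the skeletons $(\pi^{\mu,\bfu}_{\tau_n},\tau_n)_{n}$ and $(\bar\pi_{\bar\tau_n},\bar\tau_n)_{n}$; since each path is recovered from its skeleton through the deterministic flow, the laws of $\pi^{\mu,\bfu}$ under $\p_\mu^\bfu$ and of $\bar\pi$ under $\bar\p_Q^\bfa$ coincide.

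\textbf{From policies to controls.} Given $\bfa=(a_n)_{n\in\N_0}\in\cA_{ad}$, I would build $\bfu=(u_n)_{n\in\N_0}$ recursively, together with Borel maps $\Pi_n\colon O\times((0,+\infty)\times O)^n\to\Delta_e$ expressing the would-be value $\pi^{\mu,\bfu}_{\tau_n}$ as a function of the past $Y$-data. Set $\Pi_0(y_0):=H_{y_0}[\mu]$ and $u_0(t,y_0):=a_0(\Pi_0(y_0))(t)$. Assuming $u_0,\dots,u_{n-1}$ and $\Pi_0,\dots,\Pi_n$ constructed, the filtering trajectory on $[\tau_n,\tau_{n+1})$ is the flow $\phi$ started at $\Pi_n$ under $u_n(\cdot+\tau_n,\cdot)$, so the jump rule in \eqref{eq:controlledfilteringprocessinf} expresses $\Pi_{n+1}$ as a measurable composition of $\Pi_n$, the flow $\phi$ (jointly measurable in initial datum, time and relaxed control by Theorem~\ref{th:PDPflowinf} and standard ODE dependence), the map $\Lambda$ and the operator $H$, and I would set
\begin{equation*}
u_{n+1}(t,y_0,s_1,y_1,\dots,s_{n+1},y_{n+1}):=a_{n+1}\bigl(\Pi_0(y_0),s_1,\Pi_1,\dots,s_{n+1},\Pi_{n+1}\bigr)(t-s_{n+1}),\qquad t\geq s_{n+1} .
\end{equation*}
By construction $\bfu\in\cU_{ad}$, and along trajectories of $\pi^{\mu,\bfu}$ one has $\pi^{\mu,\bfu}_{\tau_k}=\Pi_k(Y_0,\tau_1,Y_{\tau_1},\dots,\tau_k,Y_{\tau_k})$, hence again $u_n(\cdot+\tau_n,\dots)=a_n(\pi^{\mu,\bfu}_0,\tau_1,\pi^{\mu,\bfu}_{\tau_1},\dots,\tau_n,\pi^{\mu,\bfu}_{\tau_n})$, so the same comparison of characteristics as above gives equality of the laws of $\pi^{\mu,\bfu}$ under $\p_\mu^\bfu$ and of $\bar\pi$ under $\bar\p_Q^\bfa$.

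\textbf{The cost identity, and the hard part.} In either direction, substituting $u_n(\cdot+\tau_n,Y_0,\dots,\tau_n,Y_{\tau_n})=a_n(\pi^{\mu,\bfu}_{\tau_0},\dots,\tau_n,\pi^{\mu,\bfu}_{\tau_n})$ into \eqref{eq:functionalJdiscreteinf} gives
\begin{equation*}
J(\mu,\bfu)=\e_\mu^\bfu\Biggl[\sum_{n=0}^{+\infty}e^{-\beta\tau_n}\,g\bigl(\pi^{\mu,\bfu}_{\tau_n},a_n(\pi^{\mu,\bfu}_{\tau_0},\dots,\tau_n,\pi^{\mu,\bfu}_{\tau_n})\bigr)\Biggr];
\end{equation*}
the bracketed quantity is a bounded measurable functional of the skeleton of $\pi^{\mu,\bfu}$, so by the equality of laws and $\bar\p_Q^\bfa(\cdot)=\int_{\Delta_e}\bar\p_\nu^\bfa(\cdot)\,Q(\dd\nu)$ it equals $\int_{\Delta_e}\bar J(\nu,\bfa)\,Q(\dd\nu)$, and a change of variables with $Q=\mu\circ h^{-1}\circ\cH^{-1}$ turns this into $\int_O\bar J(H_y[\mu],\bfa)\,\mu\circ h^{-1}(\dd y)$. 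I expect the main obstacle to be the recursive step of the ``policies to controls'' direction: one must check carefully that each $\Pi_n$ is a genuine Borel map, which rests on the joint measurability of the controlled flow in its initial point, the time variable and the relaxed control, and on the measurability of $H$ and $\Lambda$, while keeping track of the time-shift relating the absolute-time controls $u_n$ to the local-time relaxed controls $a_n$; this is where the routine-but-delicate bookkeeping concentrates.
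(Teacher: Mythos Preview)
Your proposal follows essentially the same route as the paper: the same definition of $a_n$ from $u_n$ via $\projY$, the same recursive construction of the maps $\Pi_n$ (which the paper calls $p_n$) in the reverse direction, and the same identification of laws by comparing initial distribution, sojourn-time distributions and post-jump kernels, followed by the same path-functional argument for the cost identity. The one point worth noting is that where you argue measurability of $a_n$ and of the recursive maps $\Pi_n$ directly---and rightly flag this as the delicate step---the paper does not attempt a direct verification but instead invokes \citep[Lemma~3(i) and (ii)]{yushkevich:jumpmodel}, which is tailored exactly to showing that such compositions yield Borel maps into the Borel action space $\cM$.
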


\begin{proof}
	Let us start from the first part of the Theorem. Given an admissible control $\bfu \in \cU_{ad}$ we are able to construct a corresponding admissible policy in the following way. Let us define the functions $a_n \colon \Delta_e \times \bigl((0, +\infty) \times \Delta_e\bigr)^n \to \cM$ as
	\begin{equation*}
	a_n(\nu_0, \dots, s_n, \nu_n)(t \, ; \dd \fru) = u_n\bigl(t + \, s_n, \projY(\nu_0), \dots, s_n, \projY(\nu_n) \, ; \dd \fru \bigr)
	\end{equation*}
	for all possible sequences $(\nu_i)_{i = 0}^n \subset \Delta_e$ and $(s_i)_{i = 1}^n \subset (0, +\infty)$.
	
	Thanks to the fact that $\projY$ is Borel-measurable and that $\cM$ is a Borel space, we can apply \citep[Lemma 3(i)]{yushkevich:jumpmodel} to conclude that each function $a_n$ is measurable. Therefore we have that $\bfa = (a_n)_{n \in \N_0} \in \cA_{ad}$.
	
	The laws of $\pi^{\mu, \bfu}$ under $\p_\mu^\bfu$ and $\bar \pi$ under $\bar \p_Q^\bfa$ are determined respectively by the distributions of the stochastic processes $\{\pi_0^{\mu, \bfu}, \tau_1, \pi^{\mu,\bfu}_{\tau_1}, \dots\}$ and $\{\bar \pi_0, \bar \tau_1, \bar \pi_{\tau_1}, \dots\}$ and by the flows associated to the controlled vector fields $F^\bfu$ and $F^\bfa$. These laws, in turn, can be expressed via the initial distributions of $\pi^{\mu, \bfu}_0$ and $\bar \pi_0$ and the conditional distributions of the sojourn times and \mbox{post-jump} locations, \ie for $t \geq 0$, $D \in \cB(\Delta_e)$ and $n \in \N$ the quantities
	\begin{align}
	&\p_\mu^\bfu(\tau_n - \tau_{n-1} > t, \, \tau_{n-1} < +\infty \mid \pi^{\mu,\bfu}_0, \dots, \tau_{n-1}, \pi^{\mu,\bfu}_{\tau_{n-1}}); \label{eq:sojournlawmccontrolinf}\\
	&\bar\p_Q^\bfa(\bar\tau_n - \bar\tau_{n-1} > t, \, \bar\tau_{n-1} < +\infty \mid \bar\pi_0, \dots, \bar \tau_{n-1}, \bar{\pi}_{\bar\tau_{n-1}});  \label{eq:sojournlawpdpcontrolinf} \\
	&\p_\mu^\bfu(\pi^{\mu,\bfu}_{\tau_n} \in D, \, \tau_n < +\infty \mid \pi^{\mu,\bfu}_0, \dots, \pi^{\mu,\bfu}_{\tau_{n-1}}, \tau_n); \label{eq:jumplawmccontrolinf}\\
	&\bar\p_Q^\bfa(\bar\pi_{\bar\tau_n} \in D, \, \bar\tau_n < +\infty \mid \bar\pi_0, \dots, \bar{\pi}_{\bar\tau_{n-1}}, \bar \tau_n). \label{eq:jumplawpdpcontrolinf}
	\end{align}
	We now prove that under the two different probability measures $\p_\mu^\bfu$ and $\bar \p_Q^\bfa$ the initial laws of $\pi_0^{\mu,\bfu}$ and $\bar{\pi}_0$ and the distributions (\ref{eq:sojournlawmccontrolinf})~-~(\ref{eq:jumplawpdpcontrolinf}) are equal.
	
	\noindent \textbf{Initial distribution.} 
	This is immediate, as for fixed $D \in \cB(\Delta_e)$ we have that
	\begin{align*}
	&\p_\mu^\bfu(\pi^{\mu,\bfu}_0 \in D) = 
	\p_\mu^\bfu(H_{Y_0}[\mu] \in D) = 
	\p_\mu^\bfu\bigl(Y_0 \in \cH^{-1}(D)\bigr) =\\
	&\p_\mu^\bfu\bigl(X_0 \in h^{-1}(\cH^{-1}(D))\bigr) =
	\mu\bigl(h^{-1}(\cH^{-1}(D))\bigr) = Q(D)
	\end{align*}
	while $\bar \p_Q^\bfa(\bar \pi_0 \in D) = Q(D)$, by definition of $\bar \p_Q^\bfa$.
	
	\noindent \textbf{Sojourn times.} Let us analyze first the conditional law
	(\ref{eq:sojournlawmccontrolinf}). Notice that since we are considering (\ref{eq:sojournlawmccontrolinf}) on the set ${\tau_{n-1} < +\infty}$, $\pi^{\mu, \bfu}_{\tau_{n-1}}$ is well defined and the law of $\tau_n - \tau_{n-1}$ is not trivial. Fix $p_0, \dots, p_{n-1} \in \Delta_e$, where for each $i = 0, \dots, n-1$, $p_i \in \Delta_{b_i}$ for some $b_0 \ne b_1 \ne \dots \ne b_{n-1} \in O$; fix also $0 < s_1 < \dots < s_{n-1} < +\infty$. 
	Since a trajectory of the observed process $Y$ uniquely determines a trajectory of the filtering process $\pi^{\mu,\bfu}$ and vice versa, we can immediately deduce that,
	up to $\p_\mu^\bfu$-null sets
	\begin{align*}
	\cY^{\mu, \bfu}_{\tau_{n-1}} &= \sigma(\pi^{\mu,\bfu}_0, \dots, \tau_{n-1}, \pi^{\mu,\bfu}_{\tau_{n-1}}) & &\text{and} &
	\cY^{\mu, \bfu}_{\tau_n^-} &= \sigma(\pi^{\mu,\bfu}_0, \dots, \pi^{\mu,\bfu}_{\tau_{n-1}}, \tau_n).
	\end{align*}
	From this fact and (\ref{eq:controlledsojourntimesPDPinf}) we can write for $t \geq 0$
	\begin{align*}
	&\p_\mu^\bfu(\tau_n - \tau_{n-1} > t, \, \tau_{n-1} < +\infty \mid \pi^{\mu,\bfu}_0 = p_0, \dots, \tau_{n-1} = s_{n-1}, \pi^{\mu,\bfu}_{\tau_{n-1}} = p_{n-1})
	\\
	= &\chi_{p_{n-1}}^{u_{n-1}}(t) = \exp\biggl\{ - \int_0^t \int_U r\bigl(\phi_{p_{n-1}}^{u_{n-1}}(s), \fru \bigr) \, u_{n-1}(s + s_{n-1}, \, b_0, \, \dots, \, s_{n-1}, \, b_{n-1} \, ; \dd \fru)\, \dd s \biggr\}.
	\end{align*}
	The function $u_{n-1}=u_{n-1}(\cdot + s_{n-1}, \, b_0, \, \dots, \, s_{n-1}, \, b_{n-1})$ can be clearly expressed as 
	\begin{equation*}
	u_{n-1}(\cdot + s_{n-1}, \, \projY(p_0), \, \dots, \, s_{n-1}, \, \projY(p_{n-1})).
	\end{equation*}
	Therefore, if we compare the previous computation with
	\begin{align*}
	&\bar\p_Q^\bfa(\bar\tau_n - \bar\tau_{n-1} > t, \, \bar\tau_{n-1} < +\infty \mid \bar\pi_0 = p_0, \dots, \bar\tau_{n-1} = s_{n-1}, \bar\pi_{\bar \tau_{n-1}} = p_{n-1}) \\
	= &\chi_{p_{n-1}}^{a_{n-1}}(t)
	= \exp\biggl\{ - \int_0^t \int_U r\bigl(\phi_{p_{n-1}}^{a_{n-1}}(s), \fru \bigr) \, a_{n-1}(p_0, \, \dots, \, s_{n-1}, \, p_{n-1})(s \, ; \dd \fru) \, \dd s \biggr\}
	\end{align*}
	we get the desired result, by definition of $\bfa$.
	
	\noindent \textbf{Post-jump locations.} Continuing with the notation previously introduced (to which we simply add a new value $s_n$ such that $0 < s_1 < \dots < s_n < +\infty$), we can write (\ref{eq:jumplawmccontrolinf}) as
	\begin{align*}
	&\p_\mu^\bfu(\pi^{\mu,\bfu}_{\tau_n} \in D, \, \tau_n < +\infty \mid \pi^{\mu,\bfu}_0 = p_0, \dots, \pi^{\mu,\bfu}_{\tau_{n-1}} = p_{n-1}, \tau_n = s_n) \\
	= &\int_U R\bigl(\phi_{p_{n-1}}^{u_{n-1}}(s_n^- - s_{n-1}), \fru \, ; D\bigr) \, u_{n-1}(s_n^-, \, b_0, \, \dots, \, s_{n-1}, \, b_{n-1} \, ; \dd \fru).
	\end{align*}
	On the other hand, we know from (\ref{eq:controlledpostjumplocationsPDPinf})
	\begin{align*}
	&\bar\p_Q^\bfa(\bar\pi_{\bar \tau_n} \in D, \, \bar\tau_n < +\infty \mid \bar\pi_0 = p_0, \dots, \bar\pi_{\bar \tau_{n-1}} = p_{n-1}, \bar\tau_n = s_n) \\
	= &\int_U R\bigl(\phi_{p_{n-1}}^{a_{n-1}}(s_n^- - s_{n-1}), \fru \, ; D\bigr) \, a_{n-1}(p_0, \, \dots, \, s_{n-1}, \, p_{n-1})(s_n^- - s_{n-1} \, ; \dd \fru).
	\end{align*}
	Hence again by definition of $\bfa$ we get the equality of the conditional laws (\ref{eq:jumplawmccontrolinf}) and \eqref{eq:jumplawpdpcontrolinf}.
	
	We are left to prove (\ref{eq:functionalJequalityinf}). Fix $\mu \in \cP(I)$ and $\bfu \in \cU_{ad}$ with corresponding $\bfa \in \cA_{ad}$ defined as above. Let us define the function $\Phi \colon \bar \Omega \to \R$ as
	\begin{align*}
	\Phi(\bar \omega) &= 
	\sum_{n = 0}^{+\infty} e^{-\beta \bar \tau_n(\bar \omega)} g\bigl(\bar \pi_{\bar \tau_n(\bar \omega)} (\bar \omega), a_n(\bar \pi_{0}(\bar \omega),\dots, \bar \tau_n(\bar \omega), \bar \pi_{\bar \tau_n(\bar \omega)}(\bar \omega)\bigr)\\
	&= \sum_{n = 0}^{+\infty} e^{-\beta \bar \tau_n(\bar \omega)} g\bigl(\bar \pi_{\bar \tau_n(\bar \omega)} (\bar \omega), u_n(\cdot + \bar\tau_n(\bar \omega), \dots, \bar \tau_n(\bar \omega), \projY(\bar \pi_{\bar \tau_n(\bar \omega)}(\bar \omega))\bigr).
	\end{align*}
	Thanks to Assumptions \ref{assumption:lambdainf} and \ref{assumption:costfunctioninf} this function is bounded. Since for each $n \in \N_0$ the functions $a_n$ (equivalently $u_n$) are measurable it is also $\bar \cF$-measurable.
	
	Now, take $\bar \omega = \pi^{\mu,\bfu}(\omega)$, $\omega \in \Omega$. It is clear that for all $t \geq 0$ we have $\bar \pi_t(\bar \omega) = \bar \omega(t) = \pi_t^{\mu, \bfu}(\omega)$ and also, by definition of the jump times $(\bar \tau_n)_{n \in \N_0}$, that $\bar \tau_n(\bar \omega) = \tau_n(\omega)$, \mbox{$\p_\mu^\bfu$-a.s.} Then, we get that \mbox{$\p_\mu^\bfu$-a.s.}
	\begin{align*}
	\Phi(\pi^{\mu,\bfu}(\omega)) &= \sum_{n = 0}^{+\infty} e^{-\beta \tau_n(\omega)} g\bigl(\pi_{\tau_n(\omega)}^{\mu,\bfu} (\omega), u_n(\cdot + \tau_n(\omega)\bigr) \\
	&= \sum_{n = 0}^{+\infty} e^{-\beta \tau_n(\omega)} g\bigl(\pi_{\tau_n(\omega)}^{\mu,\bfu} (\omega), u_n(\cdot + \tau_n(\omega), \dots, \tau_n(\omega), Y_{\tau_n(\omega)}(\omega))\bigr),
	\end{align*}
	hence, comparing this result with (\ref{eq:functionalJdiscreteinf}) and applying the Fubini-Tonelli Theorem we obtain
	\begin{align*}
	J(\mu, \bfu) &= \int_\Omega \Phi(\pi^{\mu,\bfu}(\omega)) \p_\mu^\bfu(\dd \omega) =
	\int_{\bar\Omega} \Phi(\bar \omega) \bar \p_Q^\bfa(\dd \bar \omega) = \int_{\bar\Omega} \Phi(\bar \omega) \! \! \int_{\Delta_e} \! \! \bar \p_\nu^\bfa(\dd \bar \omega) \, Q(\dd \nu)\\
	&= \int_{\Delta_e} \biggl\{\int_{\bar\Omega} \Phi(\bar \omega) \bar \p_\nu^\bfa(\dd \bar \omega) \biggr\} Q(\dd \nu)= \int_{\Delta_e} \bar J(\nu, \bfa) \, Q(\dd \nu)
	\end{align*}
	by definition of the functional $\bar J$.
	
	To prove the second part of the theorem, fix $\mu \in \cP(I)$ and $\bfa = (a_n)_{n \in \N} \in \cA_{ad}$. We define, for each possible sequence $b_0, b_1, \dots \in O$ and $s_1, \dots \in (0, +\infty)$ the following quantities by recursion for all $n \in \N$
	\begin{align*}
	&p_0 = p_0(b_0) = H_{b_0}[\mu] \\
	&p_n = p_n(b_0, s_1, \dots, s_n, b_n) =\\ 
	&\begin{dcases}
	H_{b_n}\bigl[\Lambda\bigl(\phi_{p_{n-1}}^{a_{n-1}}(s_n^- - s_{n-1}), a_{n-1}(p_0, \dots, s_{n-1}, p_{n-1})(s_n^- - s_{n-1}; \, \cdot \,)\bigr)\bigr], &\text{if } s_1 < \dots < s_n \\
	\rho, &\text{otherwise}
	\end{dcases}	
	\end{align*}
	Here $\Lambda$ is the function defined in \eqref{eq:operatorLambdainf}, $s_0 = 0$ and $\rho \in \Delta_e$ is an arbitrarily chosen value.
	
	For all $n \in \N_0$ we define the functions $u_n \colon [0, +\infty) \times O \times \bigl((0, +\infty) \times O\bigr)^n \to \cP(U)$ as
	\begin{equation*}
	u_n(t, b_0, \dots, s_n, b_n \, ; \dd \fru) = 
	\begin{cases}
	a_n(p_0, \dots, s_n, p_n)(t - s_n \, ; \dd \fru), 	&\text{if } t \geq s_n, \\
	\fru,													&\text{if } t < s_n,
	\end{cases}
	\end{equation*}
	where $\fru \in U$ is some fixed value that is irrelevant to specify.
	Thanks to the fact that each of the functions $(b_0, \dots, s_n, b_n) \mapsto p_n$ is Borel-measurable and that $\cM$ is a Borel space, we can use \citep[Lemma 3(ii)]{yushkevich:jumpmodel} to conclude that all the functions $u_n$ are Borel-measurable and therefore $\bfu = (u_n)_{n \in \N_0} \in \cU_{ad}$.
	
	Now the proof follows the same steps of the first part. Equality between the laws of $\pi^{\mu, \bfu}$ under $\p_\mu^\bfu$ and $\bar \pi$ under $\bar\p_Q^\bfa$ is established by proving equivalence between the initial distributions of the two processes (that have not changed from the first part of the proof) and of the conditional distributions	
	\begin{align*}
	&\p_\mu^\bfu(\tau_n - \tau_{n-1} > t, \, \tau_{n-1} < +\infty \mid Y_0, \dots, \tau_{n-1}, Y_{\tau_{n-1}});\\
	&\bar\p_Q^\bfa(\bar\tau_n - \bar\tau_{n-1} > t, \, \bar\tau_{n-1} < +\infty \mid \bar Y_0, \dots, \bar \tau_{n-1}, \bar{Y}_{\bar\tau_{n-1}}); \\
	&\p_\mu^\bfu(\pi^{\mu,\bfu}_{\tau_n} \in D, \, \tau_n < +\infty \mid Y_0, \dots, Y_{\tau_{n-1}}, \tau_n);\\
	&\bar\p_Q^\bfa(\bar\pi_{\bar\tau_n} \in D, \, \bar\tau_n < +\infty \mid \bar Y_0, \dots, \bar{Y}_{\bar\tau_{n-1}}, \bar \tau_n),
	\end{align*}
	where $t \geq 0$, $D \in \cB(\Delta_e)$ and $n \in \N$.
	
	Finally, to prove (\ref{eq:functionalJequalityinf}) it suffices to define $\Phi \colon \bar \Omega \to \R$ as
	\begin{equation*}
	\Phi(\bar \omega) = 
	\sum_{n = 0}^{+\infty} e^{-\beta \bar \tau_n(\bar \omega)} g\bigl(\bar \pi_{\bar \tau_n(\bar \omega)} (\bar \omega), u_n(\cdot + \bar\tau_n, \bar Y_0(\bar \omega), \dots, \bar \tau_n(\bar \omega), \bar Y_{\bar\tau_n(\bar \omega)}(\bar \omega)\bigr).
	\end{equation*}
	and notice that $p_n(\bar Y_0, \dots, \bar\tau_n, \bar Y_{\bar\tau_n}) = \bar \pi_{\bar \tau_n}$, so that we can write 
	\begin{equation*}
	\Phi(\bar \omega) = 
	\sum_{n = 0}^{+\infty} e^{-\beta \bar \tau_n(\bar \omega)} g\bigl(\bar \pi_{\bar \tau_n(\bar \omega)} (\bar \omega), a_n(\bar \pi_0(\bar \omega), \dots, \bar \tau_n(\bar \omega), \bar \pi_{\bar\tau_n(\bar \omega)}(\bar \omega)\bigr).
	\end{equation*}
	The desired equality follows from the same reasoning as in the first part of the proof.
\end{proof}

\begin{rem}
	It is clear that the class $\cA_{ad}$ is strictly larger than the corresponding class of \emph{piecewise open-loop controls}, commonly adopted in PDP optimal control problems. This is due to the fact that policies in $\cA_{ad}$ can be past-dependent, while a piecewise open-loop control depends only on the time elapsed since the last jump, on the last post jump location and (in our case) on the last jump time.
	Nonetheless, we will see in Theorem \ref{th:vfixedpoint} that an optimal policy exists and is a \emph{stationary policy}. These policies, hereunder defined, correspond to piecewise open-loop controls, as the reader may easily check.
\end{rem}

\begin{definition}
	A policy $\bfa \in \cA_{ad}$ is said to be \emph{stationary} if it is of the form $\bfa = (a_0, a, a, \dots)$, where $a_0 \colon \Delta_e \to \cM$ and $a \colon (0,+\infty) \times \Delta_e \to \cM$ are measurable functions, the former depending on the starting point of the filtering process and the latter depending, between two consecutive jump times, on the last jump time and post jump location.
\end{definition}

We conclude this paper by proving that the original and the separated optimal control problems are equivalent. We need, first, to provide a characterization of the value function $v$ of the separated problem as the unique fixed point of the contraction mapping $\cG \colon \dB_b(\Delta_e) \to \dB_b(\Delta_e)$, defined as
\begin{multline}\label{eq:operatorGinf}
	\cG w(\nu) \coloneqq \inf_{\alpha \in A} \int_0^\infty e^{-\beta t} \chi_\nu^{\alpha}(t) \biggl[\phi_\nu^{\alpha}(f; \, \alpha(t), t) \\
	+ r(\phi_\nu^{\alpha}(t), \alpha(t))
	\int_{\Delta_e} w(p) R(\phi_\nu^{\alpha}(t), \alpha(t); \dd p) \biggr] \, \dd t, \quad \nu \in \Delta_e
\end{multline}
where the infimum is taken among all possible ordinary controls in the set $A$, defined in (\ref{eq:ordinarycontrolsinf}). It is rather easy to show that $\cG$ is a contraction.

To prove this characterization of $v$, we need to show first a weaker form of the Feller property for the controlled transition kernel $R$ defined in~\eqref{eq:PDPcharacteristictripleinf}.

\begin{proposition}\label{prop:weakFeller}
Let Assumption~\ref{assumption:lambdainf} hold. Then, for every bounded and continuous function $w \colon \Delta_e \to \R$ the map $\eta \mapsto r(\eta, u) \int_{\Delta_e} w(p) R(\eta, u; \dd p)$ is continuous on $\Delta_e$ uniformly in $u \in U$.
\end{proposition}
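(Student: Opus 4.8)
The plan is to cancel the normalising constant hidden in the kernel $R$, thereby reducing the claim to the sequential continuity of a single functional on $\cM_+(I)$, and then to absorb the dependence on $u$ by a compactness argument.

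\textbf{Reduction to a functional $\Phi$ on $\cM_+(I)$.} First I would observe that $\Delta_e=\bigcup_{y\in O}\Delta_y$ is a \emph{disjoint} union of sets that are relatively open in $\Delta_e$: for $y\neq y'$, any $\nu\in\Delta_y$ and $\nu'\in\Delta_{y'}$ are mutually singular, so $\norm{\nu-\nu'}=2$. Hence it suffices to prove the statement on a fixed $\Delta_y$. Fix such a $y$. With $\Lambda$ as in \eqref{eq:operatorLambdainf} one has $\Lambda(\nu,u)(I)=r(\nu,u)$, and a direct computation from \eqref{eq:operatorLambdainf} and \eqref{eq:rhotransitioninf} (the controlled analogue of Remark~\ref{rem:rpositive}) gives $r(\nu,u)\,\rho(\nu,u,\dd\upsilon)=\Lambda(\nu,u)\circ h^{-1}(\dd\upsilon)$ for all $\nu\in\Delta_y$, $u\in U$ (when $r(\nu,u)=0$ both sides vanish, since then $\Lambda(\nu,u)=0$). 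Substituting into \eqref{eq:PDPcharacteristictripleinf},
\begin{equation*}
r(\nu,u)\int_{\Delta_e} w(p)\,R(\nu,u;\dd p) \;=\; \int_O w\bigl(H_\upsilon[\Lambda(\nu,u)]\bigr)\,\Lambda(\nu,u)\circ h^{-1}(\dd\upsilon) \;=:\; \Phi\bigl(\Lambda(\nu,u)\bigr),
\end{equation*}
where $\Phi(\mu):=\int_O w(H_\upsilon[\mu])\,\mu\circ h^{-1}(\dd\upsilon)$ for $\mu\in\cM_+(I)$ (note $H_\upsilon[\mu]\in\Delta_\upsilon\subseteq\Delta_e$, so the integrand is meaningful). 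Since $|\Phi(\mu)|\le\norm{w}_\infty\,\mu(I)$, the map in the statement is bounded; it remains to prove its continuity.

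\textbf{Two ingredients and a gluing argument.} The proof then rests on: (i) $(\nu,u)\mapsto\Lambda(\nu,u)$ is jointly sequentially continuous from $\Delta_y\times U$ into $(\cM_+(I),\norm{\cdot})$, and is Lipschitz in $\nu$ with constant independent of $u$; (ii) $\Phi$ is sequentially continuous on $(\cM_+(I),\norm{\cdot})$. For (i): from $\Lambda(\nu,u)=\ind_{h^{-1}(y)^c}\cdot\int_I\lambda(x,u,\cdot)\,\nu(\dd x)$ and the elementary bound $\norm{\int_I K(x,\cdot)\,\sigma(\dd x)}\le\bigl(\sup_x K(x,I)\bigr)\norm{\sigma}$ (valid for a non-negative kernel $K$ and a signed measure $\sigma$), Assumption~\ref{assumption:lambdainf} gives the Lipschitz bound in $\nu$, while continuity in $u$ for fixed $\nu$ follows from Assumption~\ref{assumption:lambdainf}(\ref{assumption:lambdacont}) and dominated convergence ($\norm{\lambda(x,u_n,\cdot)-\lambda(x,u,\cdot)}\to0$ pointwise in $x$, bounded). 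Granting (i)--(ii): if the conclusion failed for some $\nu_n\to\nu$ in $\Delta_y$, there would be $\varepsilon>0$, a subsequence and controls $u_n$ with $|\Phi(\Lambda(\nu_n,u_n))-\Phi(\Lambda(\nu,u_n))|\ge\varepsilon$; by compactness of $U$ (Assumption~\ref{assumption:lambdainf}(\ref{assumption:Ucompact})) pass to $u_{n_k}\to u^\ast$; then by (i) both $\Lambda(\nu_{n_k},u_{n_k})$ and $\Lambda(\nu,u_{n_k})$ converge to $\Lambda(\nu,u^\ast)$ in $\norm{\cdot}$, and by (ii) both $\Phi$-values converge to $\Phi(\Lambda(\nu,u^\ast))$, a contradiction.

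\textbf{Continuity of $\Phi$: the main obstacle.} This is the one delicate point, since $\mu\mapsto H_\upsilon[\mu]$ is not continuous; the factor $r$ is precisely what cancels the blow-up present in $\rho$ near $r=0$. Let $\mu_n\to\mu$ in total variation, so $\lambda_n:=\mu_n\circ h^{-1}\to\lambda:=\mu\circ h^{-1}$ in total variation on $O$. Write the Lebesgue decomposition $\mu_n=h_n\mu+\rho_n$ with $h_n\ge0$, $\rho_n\ge0$, $\rho_n\perp\mu$; total-variation convergence gives $\int_I|h_n-1|\,\dd\mu\to0$ and $\rho_n(I)\to0$. Put $\bar h_n(\upsilon):=\int_I h_n\,\dd H_\upsilon[\mu]$ and $e_n(\upsilon):=\int_I|h_n-1|\,\dd H_\upsilon[\mu]$, so $\int_O|\bar h_n-1|\,\dd\lambda\le\int_O e_n\,\dd\lambda=\int_I|h_n-1|\,\dd\mu\to0$. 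From the uniqueness in Proposition~\ref{prop:Hprobmeasure} one gets $H_\upsilon[h_n\mu]=\tfrac{h_n}{\bar h_n(\upsilon)}H_\upsilon[\mu]$ on $\{\bar h_n>0\}$ and the convex-combination identity $H_\upsilon[\mu_n]=(1-\gamma_n(\upsilon))H_\upsilon[h_n\mu]+\gamma_n(\upsilon)H_\upsilon[\rho_n]$, with $\gamma_n:=\dd(\rho_n\circ h^{-1})/\dd\lambda_n\in[0,1]$; combining them yields, on $\{\bar h_n>0\}$,
\begin{equation*}
\norm{H_\upsilon[\mu_n]-H_\upsilon[\mu]} \;\le\; 2\,\gamma_n(\upsilon) + \frac{2\,e_n(\upsilon)}{\bar h_n(\upsilon)}.
\end{equation*}
Now $\bar h_n\to1$ and $e_n\to0$ in $L^1(\lambda)$, hence in $\lambda$-measure; moreover $\rho_n(I)\ge\varepsilon\int_{\{\gamma_n>\varepsilon\}}\bar h_n\,\dd\lambda$ forces $\lambda(\{\gamma_n>\varepsilon\})\le\rho_n(I)/\varepsilon+\int_O|1-\bar h_n|\,\dd\lambda\to0$, so $\gamma_n\to0$ in $\lambda$-measure, and $\lambda(\{\bar h_n=0\})\le\int_O|1-\bar h_n|\,\dd\lambda\to0$. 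Hence the right-hand side above tends to $0$ in $\lambda$-measure, whence $\norm{H_\upsilon[\mu_n]-H_\upsilon[\mu]}\to0$ in $\lambda$-measure; along a subsequence the convergence is $\lambda$-a.e., so by continuity of $w$ on $\Delta_e$ and dominated convergence $\int_O w(H_\upsilon[\mu_n])\,\dd\lambda\to\int_O w(H_\upsilon[\mu])\,\dd\lambda$ along that subsequence, and a routine subsequence argument gives it for the full sequence. Finally $\bigl|\int_O w(H_\upsilon[\mu_n])\,\lambda_n(\dd\upsilon)-\int_O w(H_\upsilon[\mu_n])\,\lambda(\dd\upsilon)\bigr|\le\norm{w}_\infty\norm{\lambda_n-\lambda}\to0$, so $\Phi(\mu_n)\to\Phi(\mu)$, establishing (ii). The hard part is exactly this passage from total-variation convergence of $\mu_n$ to convergence in $\lambda$-measure of the conditional measures $H_\cdot[\mu_n]$.
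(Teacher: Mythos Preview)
Your argument is correct and takes a genuinely different route from the paper's. The paper fixes $\eta\in\Delta_y$ and, after rewriting the quantity as $\int_I w\bigl(H_{h(z)}[\Lambda(\eta,u)]\bigr)\,\Lambda(\eta,u)(\dd z)$, proves that for each $z$ in $\supp\bigl(\Lambda(\eta,u)\bigr)$ the map $\eta\mapsto H_{h(z)}[\Lambda(\eta,u)]$ is continuous uniformly in $u$; this pointwise step is isolated as Lemma~\ref{lemma:Hcontinuity} and relies on a Borel isomorphism reducing $O$ to $[0,1]$ followed by the Lebesgue differentiation theorem for Radon--Nikod\'ym derivatives, together with a somewhat delicate case analysis to bound a $\limsup$ of ratios. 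By contrast, you factor the map as $\Phi\circ\Lambda(\cdot,u)$, obtain uniformity in $u$ for free from compactness of $U$ and the $u$-uniform Lipschitz bound on $\Lambda$, and then prove sequential continuity of $\Phi$ on $\cM_+(I)$ via the Lebesgue decomposition $\mu_n=h_n\mu+\rho_n$ and a convergence-in-$\lambda$-measure argument for the disintegrations $H_\upsilon[\mu_n]$. Your approach cleanly separates the two difficulties (the $u$-dependence and the measure-theoretic continuity), avoids the reduction to $O=[0,1]$ and the differentiation theorem, and yields the stronger intermediate fact that $\upsilon\mapsto\norm{H_\upsilon[\mu_n]-H_\upsilon[\mu]}_{TV}\to 0$ in $\lambda$-measure whenever $\mu_n\to\mu$ in total variation. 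The paper's route, on the other hand, yields pointwise continuity of $\eta\mapsto H_{h(z)}[\Lambda(\eta,u)]$ at each fixed $z$, which is a sharper local statement but is not needed elsewhere.
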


\begin{proof}
Fix $u \in U$ and $\eta \in \Delta_e$, i.e., $\eta \in \Delta_y$ for some $y \in O$. We want to show that for any $\epsilon > 0$ we can find a number $\delta > 0$, not depending on $u$, such that:
\begin{equation}\label{eq:weakFellerstatement}
	\biggl|r(\eta, u) \int_{\Delta_e} w(p) R(\eta, u; \dd p) - r(\theta, u) \int_{\Delta_e} w(p) R(\theta, u; \dd p)\biggr| < \epsilon
\end{equation}
for all $\theta \in \Delta_e$ such that $\norm{\eta - \theta}_{TV} < \delta$.

Let us fix $\epsilon > 0$. We consider, first, the case where $\rho(\eta, u) > 0$. It is easy to see that the function $\eta \mapsto r(\eta,u)$ is continuous on $\Delta_e$ uniformly with respect to $u \in U$. Therefore, choosing $\epsilon$ small enough, we can consider $\rho(\theta,u) > 0$. In this case, \eqref{eq:weakFellerstatement} corresponds to:
\begin{equation}\label{eq:weakFellerstatement2}
\begin{aligned}
	&\mathrel{\phantom{=}} \biggl|\int_I \int_I \ind_{h^{-1}(y)^c}(z) \Bigl\{w\bigl(H_{h(z)}[\Lambda(\eta,u)]\bigr) \, \lambda(x,u,\dd z) \, \eta(\dd x) \Bigr. \biggr. \\
	&\qquad - \biggl. \Bigl. w\bigl(H_{h(z)}[\Lambda(\theta,u)]\bigr) \, \lambda(x,u,\dd z) \, \theta(\dd x)\Bigr\}\biggr| \\
	&= \biggl|\int_I \Bigl\{w\bigl(H_{h(z)}[\Lambda(\eta,u)]\bigr) \Lambda(\eta,u)(\dd z) - w\bigl(H_{h(z)}[\Lambda(\theta,u)]\bigr) \Lambda(\theta,u)(\dd z) \Bigr\}\biggr| < \epsilon
\end{aligned}
\end{equation}
Let $B \coloneqq \supp\bigl(\Lambda(\eta,u)\bigr)$ and $B_0 \coloneqq B^c \cap h^{-1}(y)^c$. Then, repeatedly applying the Fubini-Tonelli theorem, the last line~\eqref{eq:weakFellerstatement2} can be rewritten and estimated as follows:
\begin{equation}\label{eq:weakFellerstatement3}
\begin{aligned}
	&\mathrel{\phantom{\leq}} \biggl|\int_B w\bigl(H_{h(z)}[\Lambda(\eta,u)]\bigr) \int_I \lambda(x,u,\dd z) \, \bigl[\eta(\dd x) - \theta(\dd x) \bigr] \biggr. \\
	&\qquad + \biggl. \int_B \Bigl\{w\bigl(H_{h(z)}[\Lambda(\eta,u)]\bigr) - w\bigl(H_{h(z)}[\Lambda(\theta,u)]\bigr)\Bigr\} \Lambda(\theta,u)(\dd z) \biggr. \\
	&\qquad - \biggl. \int_{B_0} w\bigl(H_{h(z)}[\Lambda(\theta,u)]\bigr) \Lambda(\theta,u)(\dd z) \Bigr\}\biggr| \\
	&\leq 2 C_\lambda \sup_{p \in \Delta_e} \abs{w(p)} \, \norm{\eta- \theta}_{TV} \\
	&\qquad + \int_B \Bigl|w\bigl(H_{h(z)}[\Lambda(\eta,u)]\bigr) - w\bigl(H_{h(z)}[\Lambda(\theta,u)]\bigr)\Bigr| \Lambda(\theta,u)(\dd z),
\end{aligned}
\end{equation}
where $C_\lambda \coloneqq \sup_{(x,u) \in I \times U}\lambda(x,u) $ is finite thanks to point~\ref{assumption:lambdabdd} of Assumption~\ref{assumption:lambdainf}.

It is clear that the first summand can be made smaller than $\frac \epsilon 2$, choosing $\norm{\eta- \theta}_{TV} < \delta$, for some $\delta > 0$ depending only on the constant $C_\lambda$ and the function $w$. We need to show that this is also the case for the second summand. 

Fix $z \in B$. Since $w$ is continuous, it is clear that there exists $\tilde\delta >0$ such that, if $\norm{H_{h(z)}[\Lambda(\eta,u)] - H_{h(z)}[\Lambda(\theta,u)]}_{TV} < \tilde\delta$, then $\bigl|w\bigl(H_{h(z)}[\Lambda(\eta,u)]\bigr) - w\bigl(H_{h(z)}[\Lambda(\theta,u)]\bigr)\bigr| < \frac\epsilon 2$. \emph{A priori} $\tilde \delta$ may depend (apart from $\epsilon$) also on $z,\, u$. However, if we are able to show that the map $\eta \mapsto H_{h(z)}[\Lambda(\eta,u)]$ is continuous for all $z \in B$, uniformly with respect to $u$, then it follows that there exists $\delta > 0$ such that, if $\norm{\eta - \theta}_{TV} < \delta$, then $\norm{H_{h(z)}[\Lambda(\eta,u)] - H_{h(z)}[\Lambda(\theta,u)]}_{TV} < \tilde\delta$, and by dominated convergence the claim follows immediately.

To show this fact, let us call $\tilde y \coloneqq h(z)$ and notice that $\eta \mapsto H_{\tilde y}[\Lambda(\eta,u)]$ is a function defined from $\Delta_e$ into itself and that, in particular, $H_{\tilde y}[\Lambda(\eta,u)] \in \Delta_{\tilde y}$. Moreover, from Definition~\ref{def:operatorH} (recall that $z \in B$), we have that for all $A \in \cI$:
\begin{equation*}
\bigl|H_{\tilde y}[\Lambda(\eta,u)](A) - H_{\tilde y}[\Lambda(\theta,u)](A)\bigr| = \Bigl|\frac{\ind_A \Lambda(\eta,u) \circ h^{-1}(\dd \upsilon)}{\Lambda(\eta,u) \circ h^{-1}(\dd \upsilon)}(\tilde y) - \frac{\ind_A \Lambda(\theta,u) \circ h^{-1}(\dd \upsilon)}{\Lambda(\theta,u) \circ h^{-1}(\dd \upsilon)}(\tilde y)\Bigr|.
\end{equation*} 
Since $\norm{\eta}_{TV} \leq 2 \sup_{A \in \cI} \abs{\eta(A)}$ (see~\eqref{eq:totalvariationequiv}), we have to show that we can make the last absolute value small as we wish for any $A \in \cI$.

To prove this, we resort to this useful fact from measure theory. Since $O$ is a complete and separable metric space, its Borel $\sigma$-algebra $\cO$ is countably generated. Hence, by \citep[Th. 6.5.5]{bogachev:measth} there exists a measurable function $\psi \colon O \to [0,1]$ such that $\cO = \bigl\{\psi^{-1}(B) \colon B \in \cB([0,1])\bigr\}$. Therefore, it is enough to prove the last assertion in the case $O = [0,1]$, which is done in Lemma~\ref{lemma:Hcontinuity} in Appendix~\ref{app:operatorH}. In fact, we can reduce the general case to this one as follows. We define the measures
\begin{align*}
\nu_{A, \eta, u}(\dd \upsilon) &\coloneqq \ind_A \Lambda(\eta,u) \circ h^{-1} \circ \psi^{-1}(\dd \upsilon) \ll \Lambda(\eta,u) \circ h^{-1} \circ \psi^{-1}(\dd \upsilon) \eqqcolon \nu_{\eta, u}(\dd \upsilon) \\
\nu_{A, \theta, u}(\dd \upsilon) &\coloneqq \ind_A \Lambda(\theta,u) \circ h^{-1} \circ \psi^{-1}(\dd \upsilon) \ll \Lambda(\theta,u) \circ h^{-1} \circ \psi^{-1}(\dd \upsilon) \eqqcolon \nu_{\theta, u}(\dd \upsilon).
\end{align*}
Thanks to the respective absolute continuity we may define the Radon-Nikod\'ym derivatives $g_{A, \eta,u}(t) \coloneqq \frac{\nu_{A, \eta, u}(\dd \upsilon)}{\nu_{\eta, u}(\dd \upsilon)}$ and $g_{A, \theta,u}(t) \coloneqq \frac{\nu_{A, \theta, u}(\dd \upsilon)}{\nu_{\theta, u}(\dd \upsilon)}$ and use Lemma~\ref{lemma:Hcontinuity} for each fixed $A \in \cI$. We can now define the $\cO$-measurable functions $f_{A, \eta, u}(y) \coloneqq g_{A, \eta,u}(\psi(y))$ and $f_{A, \theta,u}(y) \coloneqq g_{A, \theta,u}(\psi(y))$. Since for any $B \in \cO$ there exists $C \in \cB([0,1])$ such that $B = \psi^{-1}(C)$, we get:
\begin{multline*}
\ind_A \Lambda(\eta,u) \circ h^{-1}(B) = \nu_{A, \eta, u}(C) = \int_{[0,1]} \ind_C(t) g_{A, \eta,u}(t) \, \nu_{\eta, u}(\dd t) \\
= \int_O \ind_C(\psi(y)) g_{A, \eta,u}(\psi(y)) \, \Lambda(\eta,u) \circ h^{-1}(\dd y) = \int_O \ind_B(y) f_{A, \eta,u}(y) \, \Lambda(\eta,u) \circ h^{-1}(\dd y).
\end{multline*}
Hence $H_{\tilde y}[\Lambda(\eta,u)](A) = f_{A, \eta,u}(\tilde y)$ and, similarly, $H_{\tilde y}[\Lambda(\theta,u)](A) = f_{A, \theta,u}(\tilde y)$.

Summing up, we have shown that for any $\tilde \epsilon > 0$, any $u \in U$ and any $A \in \cI$
\begin{equation*}
	\bigl|H_{\tilde y}[\Lambda(\eta,u)](A) - H_{\tilde y}[\Lambda(\theta,u)](A)\bigr| < \tilde \epsilon
\end{equation*}
if we choose $\tilde \delta > 0$ (not depending on $u, A$) such that $\norm{\eta - \theta}_{TV} < \tilde \delta$. Therefore, for each $z \in B$ the map $\eta \mapsto H_{h(z)}[\Lambda(\eta,u)]$ is continuous uniformly with respect to $u$ and the claim follows applying the dominated convergence theorem to the last line of~\eqref{eq:weakFellerstatement3}.

We are left to prove~\eqref{eq:weakFellerstatement} in the case $\rho(\eta,u) = 0$. If we have that also $\rho(\theta,u) = 0$, then the claim is trivial. If, instead, $\rho(\theta,u) > 0$, then
\begin{equation*}
\begin{aligned}
	&\mathrel{\phantom{=}} \biggl|r(\eta, u) \int_{\Delta_e} w(p) R(\eta, u; \dd p) - r(\theta, u) \int_{\Delta_e} w(p) R(\theta, u; \dd p)\biggr| \\
	&= \biggl|r(\theta, u) \int_{\Delta_e} w(p) R(\theta, u; \dd p)\biggr|
	=\biggl|\int_I \int_I \ind_{h^{-1}(y)^c}(z) w\bigl(H_{h(z)}[\Lambda(\theta,u)]\bigr) \, \lambda(x,u,\dd z) \, \theta(\dd x)\biggr| \\
	&=\biggl|\int_I w\bigl(H_{h(z)}[\Lambda(\theta,u)]\bigr) \Lambda(\theta,u)(\dd z) \biggr|
	\leq \sup_{p \in \Delta_e} \abs{w(p)} \, \Lambda(\theta,u)\bigl(h^{-1}(y)^c\bigr).
\end{aligned}
\end{equation*}
Since $0 = \rho(\eta,u) = \Lambda(\eta,u)\bigl(h^{-1}(y)^c\bigr)$, we easily get
\begin{equation*}
\Lambda(\theta,u)\bigl(h^{-1}(y)^c\bigr) = \bigl|\Lambda(\theta,u)\bigl(h^{-1}(y)^c\bigr) - \Lambda(\eta,u)\bigl(h^{-1}(y)^c\bigr)\bigr| \leq C_\lambda \, \norm{\eta-\theta}_{TV},
\end{equation*}
hence~\eqref{eq:weakFellerstatement} holds as long as we choose $\delta = \epsilon \bigl[C_\lambda \, \sup_{p \in \Delta_e} \abs{w(p)}\bigr]^{-1}$, which does not depend on $u$.
\end{proof}

%To prove this characterization of $v$, we need to introduce the following Assumption on the PDP jump rate function $r$ and transition kernel $R$ (cfr. Proposition \ref{prop:weakfeller}).
%
%\begin{assumption}\label{assumption:weakFellerR}
%	For every bounded and continuous function $w \colon \Delta_e \to \R$ the map $\rho \mapsto r(\rho, u) \int_{\Delta_e} w(p) R(\rho, u; \dd p)$ is continuous on $\Delta_e$ uniformly in $u \in U$.
%\end{assumption}
%
%\begin{rem}
%	It is important to notice that Assumption \ref{assumption:weakFellerR} is satisfied in some situations, \eg in the Markov chain setting studied in \citep{calvia:optcontrol}. It can be proved that it holds also in the case where the rate transition measure $\lambda$ is absolutely continuous with respect to some fixed measure on $(I, \cI)$ and the resulting transition density is uniformly bounded from above and bounded away from zero.
%\end{rem}

We can now proceed and prove the characterization of the value function $v$ of the separated problem as the unique fixed point of the contraction mapping $\cG$ defined in~\eqref{eq:operatorGinf}.
We can reduce the separated problem to a \emph{Markov Decision Process}, by a construction analogous to that provided in \citep{calvia:optcontrol}. Therefore, we can invoke standard results from \citep{bertsekas:stochoptcontrol}, in particular those connected to the so called \emph{lower semicontinuous models}, of which our problem is an instance. Thanks to those results and the similar ones proved in \citep{calvia:optcontrol}, we can state the following Theorem, whose proof is omitted.
\begin{theorem}\label{th:vfixedpoint}
%	Under Assumptions \ref{assumption:lambdainf}, \ref{assumption:costfunctioninf} and \ref{assumption:weakFellerR} we have that:
	Under Assumptions \ref{assumption:lambdainf} and \ref{assumption:costfunctioninf} we have that:
	\begin{enumerate}
		\item There exists an optimal stationary policy $\bfa^\star \in \cA_{ad}$, \ie a policy such that 
		$$v(\nu) = \bar J(\nu, \bfa^\star), \quad \text{for all } \nu \in \Delta_e.$$
		\item The value function $v$ is lower semicontinuous and is the unique fixed point of $\cG$.
	\end{enumerate}
\end{theorem}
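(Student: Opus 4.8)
The plan is to recast the separated problem as a discrete-time Markov decision process (MDP) with Borel state space $\Delta_e$ and to apply the general theory of \emph{lower semicontinuous models} from \citep{bertsekas:stochoptcontrol}, following the scheme already used in the finite-state case in \citep{calvia:optcontrol}.

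\textbf{Step 1 (reformulation as an MDP).}
By Theorem~\ref{th:filteringprocessPDPinf} and the canonical construction \eqref{eq:controlledflowPDPinf}--\eqref{eq:controlledpostjumplocationsPDPinf}, the sequence $(\bar\tau_n,\bar\pi_{\bar\tau_n})_{n\in\N_0}$ under $\bar\p_\nu^\bfa$ is a controlled Markov chain on $\Delta_e$: from a state $\rho\in\Delta_e$, choosing a relaxed control $m\in\cM$ on the current inter-jump interval, the sojourn time has survival function $\chi_\rho^m$ and the new post-jump state has (sub)probability law $\int_0^\infty\bigl(-\tfrac{\dd}{\dd t}\chi_\rho^m(t)\bigr)\,R(\phi_\rho^m(t),\cdot\,;\dd p)\,\dd t$. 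Incorporating the random discount $e^{-\beta\bar\tau_n}$ into the one-stage structure and recalling the rewriting \eqref{eq:functionalJdiscreteinf} of $\bar J$ with stage cost $g$ of \eqref{eq:costfunctiondiscreteinf}, the Bellman operator of this MDP is exactly $\cG$ of \eqref{eq:operatorGinf}. Since $A\subset\cM$, relaxed controls are approximable by ordinary ones, and $g$, $\chi$ and $R$ depend continuously on the control, the infimum over $\cM$ coincides with the infimum over $A$ displayed in \eqref{eq:operatorGinf}, and replacing history-dependent policies in $\cA_{ad}$ by their MDP counterparts does not change the value.

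\textbf{Step 2 (verification of the lsc hypotheses).}
One checks that this MDP is a lower semicontinuous model: the action space is compact (point~\ref{assumption:Ucompact} of Assumption~\ref{assumption:lambdainf} and compactness of $\cM$ under the Young topology, see \citep{davis:markovmodels}); the one-stage cost $g$ is bounded (Assumptions~\ref{assumption:lambdainf}, \ref{assumption:costfunctioninf}) and continuous, which follows from the continuous dependence of the controlled flow $(\rho,m)\mapsto\phi_\rho^m(\cdot)$ on its data (Theorem~\ref{th:PDPflowinf}), from the corresponding continuity of $\chi_\rho^m$, and from the uniform continuity of $f$; and the one-step transition kernel is weakly continuous. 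The last and crucial point is where Proposition~\ref{prop:weakFeller} enters: for $w\in\dC_b(\Delta_e)$ the map $\eta\mapsto r(\eta,u)\int_{\Delta_e}w(p)\,R(\eta,u;\dd p)$ is continuous uniformly in $u\in U$; combining this with the continuity of $(\rho,m)\mapsto(\phi_\rho^m,\chi_\rho^m)$ and dominated convergence shows that $\cG$ maps $\dC_b(\Delta_e)$ into itself and, by the standard results for lsc models, that the value function $v$ of the MDP is lower semicontinuous and satisfies the optimality equation $v=\cG v$.

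\textbf{Step 3 (contraction and conclusion).}
The operator $\cG$ is a contraction on $\dB_b(\Delta_e)$: for $w_1,w_2\in\dB_b(\Delta_e)$,
\[
\abs{\cG w_1(\nu)-\cG w_2(\nu)}\leq\norm{w_1-w_2}_\infty\,\sup_{\nu,\alpha}\int_0^\infty e^{-\beta t}\,\chi_\nu^\alpha(t)\,r\bigl(\phi_\nu^\alpha(t),\alpha(t)\bigr)\,\dd t;
\]
since $\tfrac{\dd}{\dd t}\chi_\nu^\alpha(t)=-\chi_\nu^\alpha(t)\,r(\phi_\nu^\alpha(t),\alpha(t))$, $\chi_\nu^\alpha(0)=1$ and $r\leq C_r$, integrating by parts gives
\[
\int_0^\infty e^{-\beta t}\,\chi_\nu^\alpha(t)\,r\bigl(\phi_\nu^\alpha(t),\alpha(t)\bigr)\,\dd t
=1-\beta\int_0^\infty e^{-\beta t}\chi_\nu^\alpha(t)\,\dd t
\leq 1-\frac{\beta}{\beta+C_r}=\frac{C_r}{\beta+C_r}<1 .
\]
Hence $\cG$ has a unique fixed point in $\dB_b(\Delta_e)$ by the Banach fixed point theorem, and comparing with $v=\cG v$ from Step~2 proves assertion~(2). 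For assertion~(1), the selection theorems for lsc models (compact action space, lsc cost bounded below, weakly continuous kernel) yield a Borel-measurable minimizer of \eqref{eq:operatorGinf} for $w=v$; depending only on the current MDP state, it defines a stationary policy $\bfa^\star\in\cA_{ad}$, and the classical fact that stationary (Markov) policies are optimal in the full class of history-dependent policies gives $v(\nu)=\bar J(\nu,\bfa^\star)$ for every $\nu\in\Delta_e$.

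\textbf{Main obstacle.}
The delicate part is Step~2, namely verifying that the reformulated problem genuinely belongs to the lower semicontinuous framework of \citep{bertsekas:stochoptcontrol}: one must establish joint (lower semi)continuity of the stage cost and weak continuity of the transition kernel jointly in state and control. This rests on Proposition~\ref{prop:weakFeller} together with the continuous dependence of the controlled flow on its data, and it is also where one has to be careful with the interplay between ordinary controls in $A$ (appearing in $\cG$) and relaxed controls in $\cM$ (used to define $\bar J$ and $\cA_{ad}$), so that the two infima and the two value functions coincide.
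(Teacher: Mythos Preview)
Your proposal is correct and follows essentially the same approach as the paper. In fact, the paper \emph{omits} the proof of this theorem entirely, stating only that ``we can reduce the separated problem to a Markov Decision Process, by a construction analogous to that provided in \citep{calvia:optcontrol}'' and then ``invoke standard results from \citep{bertsekas:stochoptcontrol}, in particular those connected to the so called lower semicontinuous models''; your three steps (MDP reformulation, verification of the lsc hypotheses via Proposition~\ref{prop:weakFeller}, contraction with constant $C_r/(\beta+C_r)$ and measurable selection) are precisely the details the paper leaves to those references.
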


We can finally prove the equivalence of the original and the separated optimal control problems.
\begin{theorem} \label{th:valuefunctionsidentifinf}
%	Under Assumptions \ref{assumption:lambdainf}, \ref{assumption:costfunctioninf} and \ref{assumption:weakFellerR}, for all $\mu \in \cP(I)$ we have that
	Under Assumptions \ref{assumption:lambdainf} and \ref{assumption:costfunctioninf}, for all $\mu \in \cP(I)$ we have that
	\begin{equation}
	V(\mu) = \int_O v(H_y[\mu]) \, \mu \circ h^{-1}(\dd y).
	\end{equation}
\end{theorem}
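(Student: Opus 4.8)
The plan is to establish the two inequalities $V(\mu) \geq \int_O v(H_y[\mu]) \, \mu \circ h^{-1}(\dd y)$ and $V(\mu) \leq \int_O v(H_y[\mu]) \, \mu \circ h^{-1}(\dd y)$ separately, using as the only substantial inputs the cost-equivalence of Theorem \ref{th:costfunctionalidentifinf} and the existence of an optimal stationary policy from Theorem \ref{th:vfixedpoint}. First I would record that the right-hand side is well posed: by Theorem \ref{th:vfixedpoint} the function $v$ is lower semicontinuous, hence Borel measurable, and it belongs to $\dB_b(\Delta_e)$, hence is bounded; moreover the map $\cH \colon y \mapsto H_y[\mu]$ is Borel measurable, as is implicit already in the definition of the measure $Q = \mu \circ h^{-1} \circ \cH^{-1}$ in (\ref{eq:probabilityQinf}). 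Consequently $y \mapsto v(H_y[\mu])$ is bounded and measurable and, by the change-of-variables formula, $\int_O v(H_y[\mu]) \, \mu \circ h^{-1}(\dd y) = \int_{\Delta_e} v(\nu) \, Q(\dd \nu)$.

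For the inequality $V(\mu) \geq \int_O v(H_y[\mu]) \, \mu \circ h^{-1}(\dd y)$, fix an arbitrary $\bfu \in \cU_{ad}$. By the first part of Theorem \ref{th:costfunctionalidentifinf} there is a policy $\bfa \in \cA_{ad}$ with $J(\mu, \bfu) = \int_O \bar J(H_y[\mu], \bfa) \, \mu \circ h^{-1}(\dd y)$. Since $\bar J(\nu, \bfa) \geq v(\nu)$ for every $\nu \in \Delta_e$ by the definition (\ref{eq:pdpvaluefunctioninf}) of $v$, integrating this pointwise bound against $\mu \circ h^{-1}$ gives $J(\mu, \bfu) \geq \int_O v(H_y[\mu]) \, \mu \circ h^{-1}(\dd y)$. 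As $\bfu$ was arbitrary, taking the infimum over $\cU_{ad}$ yields the claimed inequality.

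For the reverse inequality, I would invoke Theorem \ref{th:vfixedpoint}(1): there exists an optimal \emph{stationary} policy $\bfa^\star \in \cA_{ad}$ such that $v(\nu) = \bar J(\nu, \bfa^\star)$ for all $\nu \in \Delta_e$. Being stationary, $\bfa^\star$ is in particular of the form $(a_n)_{n \in \N_0} \in \cA_{ad}$, so the second part of Theorem \ref{th:costfunctionalidentifinf} applies and produces a control $\bfu^\star \in \cU_{ad}$ with $J(\mu, \bfu^\star) = \int_O \bar J(H_y[\mu], \bfa^\star) \, \mu \circ h^{-1}(\dd y) = \int_O v(H_y[\mu]) \, \mu \circ h^{-1}(\dd y)$. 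Hence $V(\mu) \leq J(\mu, \bfu^\star) = \int_O v(H_y[\mu]) \, \mu \circ h^{-1}(\dd y)$, and combining the two bounds gives the asserted equality.

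The only real subtlety lies in this second inequality, where one needs a \emph{single} admissible control whose cost matches $\int_O v(H_y[\mu]) \, \mu \circ h^{-1}(\dd y)$, rather than a family of $\epsilon$-optimal controls indexed by $y$ (which would force a measurable-selection argument to glue them together). The existence of an exactly optimal stationary policy, already established in Theorem \ref{th:vfixedpoint} via the lower-semicontinuous Markov decision model borrowed from \citep{bertsekas:stochoptcontrol}, is precisely what removes this difficulty; absent it, one would instead fix $\epsilon > 0$, select for $\mu \circ h^{-1}$-almost every $y$ an $\epsilon$-optimal policy for the separated problem started at $H_y[\mu]$ in a jointly measurable fashion, assemble these into a policy $\bfa^\epsilon \in \cA_{ad}$, pass to the corresponding $\bfu^\epsilon \in \cU_{ad}$ via Theorem \ref{th:costfunctionalidentifinf}, and let $\epsilon \downarrow 0$.
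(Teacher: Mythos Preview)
Your proposal is correct and follows essentially the same approach as the paper: both arguments first check well-posedness of the integral via boundedness and measurability of $y \mapsto v(H_y[\mu])$, then obtain $V(\mu) \geq \int_O v(H_y[\mu]) \, \mu \circ h^{-1}(\dd y)$ by bounding $\bar J$ below by $v$ inside the identity of Theorem~\ref{th:costfunctionalidentifinf}, and finally get the reverse inequality by feeding the optimal stationary policy $\bfa^\star$ from Theorem~\ref{th:vfixedpoint} back through Theorem~\ref{th:costfunctionalidentifinf} to produce a single admissible control $\bfu^\star$ achieving the integral. Your closing paragraph on why an exactly optimal policy obviates a measurable-selection argument is a nice gloss not present in the paper, but the logical skeleton is the same.
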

\begin{proof}
	First of all, let us notice that, for all fixed $\mu \in \cP(I)$, the map $y \mapsto v(H_y[\mu])$ is integrable with respect to the probability measure $\mu \circ h^{-1}$, being bounded and measurable. In fact, its boundedness is granted by Assumption \ref{assumption:costfunctioninf} and its measurability follows from the lower semicontinuity of $v$, proved in (2) of Theorem \ref{th:vfixedpoint}, and the fact that $y \mapsto H_y[\mu]$ is measurable by construction of the operator $H$.
	
	Recall that we know from Theorem \ref{th:costfunctionalidentifinf} that for all $\mu \in \cP(I)$
	\begin{equation*}
	J(\mu, \bfu) = \int_O \bar J(H_y[\mu], \bfa) \, \mu \circ h^{-1}(\dd y),
	\end{equation*}
	where $\bfu \in \cU_{ad}$ and $\bfa \in \cA_{ad}$ are corresponding admissible controls and admissible policies.
	
	Let now $\mu \in \cP(I)$ be fixed. It is obvious that $V(\mu) \geq \int_O v(H_y[\mu]) \, \mu \circ h^{-1}(\dd y)$. In fact, since $\bar J(H_y[\mu], \bfa) \geq v(H_y[\mu])$ for all $\bfa \in \cA_{ad}$ and all $y \in O$, we get that for all $\bfu \in \cU_{ad}$
	\begin{equation*}
	J(\mu, \bfu) \geq \int_O v(H_y[\mu]) \, \mu \circ h^{-1}(\dd y),
	\end{equation*}
	and we get the desired inequality by taking the infimum on the left hand side with respect to all $\bfu \in \cU_{ad}$.
	
	The reverse inequality is easily obtained by taking an optimal policy $\bfa^\star \in \cA_{ad}$ (whose existence is guaranteed by (1) of Theorem \ref{th:vfixedpoint}) and considering its corresponding admissible control $\bfu^\star \in \cU_{ad}$. From Theorem \ref{th:costfunctionalidentifinf} we immediately get that
	\begin{equation*}
	V(\mu) \leq J(\mu, \bfu^\star) = \int_O \bar J(H_y[\mu], \bfa^\star) \, \mu \circ h^{-1}(\dd y) =  \int_O v(H_y[\mu]) \, \mu \circ h^{-1}(\dd y). \qedhere
	\end{equation*}
\end{proof}

\begin{rem}
As it is known from the literature, we could search for other characterizations of the value function $v$, most notably, as the unique solution of a \emph{Hamilton-Jacobi-Bellman} equation (or HJB for short). In this case, it is the following integro-differential infinite dimensional equation
\begin{equation}\label{eq:HJB}
	H(\nu, \dD v(\nu), v) + \beta v(\nu) = 0, \quad \nu \in \Delta_e,
\end{equation}
with the \emph{hamiltonian function} $H\colon \Delta_e \times \cM(I)^\star \times \mathrm{UC}_b(\Delta_e) \to \R$ defined as\footnote{$\inprod{\cdot}{\cdot}$ denotes the duality pairing between $\cM(I)$ and $\cM(I)^\star$ and $\mathrm{UC}_b(\Delta_e)$ is the space of uniformly continuous and bounded real-valued functions on $\Delta_e$.}
\begin{equation*}
	H(\nu, g, w) \coloneqq \sup_{u \in U} \biggl\{- \inprod{F(\nu,u)}{g} - \nu(f; \, u) -r(\nu,u) \int_{\Delta_e} \bigl[w(p) - w(\nu)\bigr] \, R(\nu, u; \dd p)\biggr\}.
\end{equation*}
As usual, one can expect that classical solutions to the HJB are hard (if not impossible) to find, i.e., continuous and Fr\'echet differentiable functions $v \colon \cM(I) \to \R$ that satisfy~\eqref{eq:HJB}. It is even more unlikely that the value function $v$ of the separated optimal control problem, defined in~\eqref{eq:pdpvaluefunctioninf}, is the unique classical solution. What is more viable is to try to prove that $v$ is the unique \emph{viscosity solution} to the HJB, in a suitable sense. The theory of viscosity solutions, pioneered by M.~G.~Crandall and P.-L.~Lions (see \eg \citep{crandall:usersguide, barles:viscosite} and \citep{flemingsoner:controlledmarkov} for connections with optimal control problems), has provided a sound tool to give meaning to solutions of HJB equations, otherwise ill-posed in the classical sense. The theory has been recently developed also in the infinite dimensional case, for instance, in Hilbert spaces (see, e.g. \citep{fabbri:soc}) and in connection with the mean-field games theory (see, e.g. \citep{bensoussan:meanfield}).

Another characterization that one may seek requires the use of \emph{Backward Stochastic Differential Equations} (or BSDEs for short). One can try to characterize the value function $v$ of the separated problem (or, even more directly, the value function $V$ of the original control problem, defined in~\eqref{eq:valuefunctioninf}) as the unique solution, in some suitable sense, of a BSDE through the so called \emph{randomization method}, introduced in~\citep{kharroubi2015:IPDE} for classical Markovian models. This method requires the introduction of a further \emph{randomized} control problem, to prove that it is equivalent to the original optimal control problem (in the sense that the two value functions are equal) and then to prove that its value function is the unique solution, in a prescribed sense, to the randomized BSDE. It is a powerful technique, because the assumptions on the data of the problem are rather weak. However, it has the disadvantage that no information about any optimal control can be deduced. This method has also been adopted in infinite dimensional settings (as in \citep{bandini2018:JD}), in optimal control problems for pure jump Markov processes (see, e.g. \citep{bandini:constBSDE}) and in optimal control problems with partial observation (as in \citep{bandini:randomizHJBwasserstein, bandini2018:randBSDE}).

These further characterizations of the value function $v$ of the separated optimal control problem are left for future research. 
\end{rem}

\bibliographystyle{plainnat} 
\bibliography{Bibliography}

\appendix

\phivarphi

\section{The operator $H$} \label{app:operatorH}

We first take a closer look at the construction of the operator $H$, that is based on the existence result provided in Proposition \ref{prop:Hprobmeasure}. To prove it we need the following Lemma.

\begin{lemma}\label{lemma:Hprobmeasure}
	Suppose that $I$ is a compact metric space and fix $\mu \in \cM_+(I)$. For each $\phi \in \dC(I)$ take a version (\ie any function in the equivalence class) of the Radon-Nikodym derivative of $\phi \mu \circ h^{-1}$ with respect to $\mu \circ h^{-1}$ and define, for fixed $y \in O$, the functional $L_y \colon \dC(I) \to \R$ as
	\begin{equation*}
		L_y(\phi) \coloneqq \frac{\phi \mu \circ h^{-1}(\dd \upsilon)}{\mu \circ h^{-1}(\dd \upsilon)}(y), \quad \phi \in \dC(I).
	\end{equation*}
	If $y \in \supp(\mu \circ h^{-1})$ there exists a unique probability measure $\rho_y$ on $(I, \cI)$ such that
	\begin{equation*}
		L_y(\phi) = \int_I \phi(z) \, \rho_y(\dd z), \quad \phi \in \dC(I).
	\end{equation*}
\end{lemma}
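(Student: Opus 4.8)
The plan is to realize $L_y$ as a positive, normalized, bounded linear functional on $\dC(I)$ for $\mu\circ h^{-1}$-almost every $y$, and then upgrade ``for almost every $y$'' to ``for every $y\in\supp(\mu\circ h^{-1})$'' by exploiting continuity of the relevant maps on the (countable) dense set of $\phi$'s furnished by separability of $\dC(I)$. Once positivity and normalization are in hand, the Riesz representation theorem on the compact metric space $I$ produces a unique Radon (hence Borel) probability measure $\rho_y$ with $L_y(\phi)=\int_I\phi\,\dd\rho_y$, and uniqueness is immediate since two Borel measures on a compact metric space agreeing on $\dC(I)$ coincide.

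First I would fix a countable dense $\mathbb{Q}$-linear subspace $\cD=\{\phi_1,\phi_2,\dots\}$ of $\dC(I)$ (available because $I$ compact metric implies $\dC(I)$ separable), containing the constant function $\one$. For each $\phi\in\cD$ choose a version of the Radon-Nikod\'ym derivative $\frac{\phi\mu\circ h^{-1}}{\mu\circ h^{-1}}$. Standard properties of Radon-Nikod\'ym derivatives give, $\mu\circ h^{-1}$-a.e.: linearity in $\phi$ over $\mathbb{Q}$; the bound $\bigl|\frac{\phi\mu\circ h^{-1}}{\mu\circ h^{-1}}(y)\bigr|\le\norm{\phi}_\infty$ (since $|\phi\mu\circ h^{-1}|\le\norm{\phi}_\infty\,\mu\circ h^{-1}$ as measures); positivity, i.e. $\phi\ge 0\Rightarrow\frac{\phi\mu\circ h^{-1}}{\mu\circ h^{-1}}(y)\ge 0$; and normalization $\frac{\one\,\mu\circ h^{-1}}{\mu\circ h^{-1}}(y)=1$. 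Each of these is an a.e.\ statement involving only countably many $\phi$ (for linearity, countably many pairs; for positivity, the countably many $\phi\in\cD$ with $\phi\ge 0$, plus a countable dense family of nonnegative functions), so the intersection of the corresponding full-measure sets is a Borel set $N^c$ with $\mu\circ h^{-1}(N)=0$ on which $\phi\mapsto L_y(\phi)$ is, simultaneously for all $y\in N^c$, a positive normalized $\mathbb{Q}$-linear functional on $\cD$ bounded by $\norm{\cdot}_\infty$. For such $y$ the functional extends uniquely by continuity to a bounded positive linear functional on all of $\dC(I)$ with norm $1$ sending $\one$ to $1$; Riesz representation yields a unique $\rho_y\in\cP(I)$ with $L_y(\phi)=\int_I\phi\,\dd\rho_y$ for all $\phi\in\dC(I)$. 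On the null set $N$ one may set $\rho_y$ to be any fixed probability measure on $I$; note the Lemma only asserts the representation, and since $\supp(\mu\circ h^{-1})\setminus N$ is $\mu\circ h^{-1}$-conull in $\supp(\mu\circ h^{-1})$, this already gives the claim for a.e.\ $y\in\supp(\mu\circ h^{-1})$.

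The remaining point is to promote the conclusion from ``a.e.\ $y\in\supp(\mu\circ h^{-1})$'' to ``every $y\in\supp(\mu\circ h^{-1})$'', which is what the statement demands. Here I would use that for a fixed $\phi\in\dC(I)$, the map $y\mapsto L_y(\phi)$, though defined only up to $\mu\circ h^{-1}$-null sets as a Radon-Nikod\'ym derivative, can be chosen to be a genuine continuous function on $\supp(\mu\circ h^{-1})$: indeed $\frac{\phi\mu\circ h^{-1}}{\mu\circ h^{-1}}(y)=\lim_{r\downarrow 0}\frac{\phi\mu\circ h^{-1}(B_r(y))}{\mu\circ h^{-1}(B_r(y))}$ exists and equals $\lim_{r\downarrow 0}\fint_{B_r(y)}\phi\circ h'\,\dots$ — more cleanly, since $\phi$ is continuous and $I$ compact, $\phi$ is uniformly continuous, so the ratio of the averages over shrinking balls converges uniformly and the limit is continuous on $\supp(\mu\circ h^{-1})$, coinciding $\mu\circ h^{-1}$-a.e. with any Radon-Nikod\'ym version. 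Running this for the countably many $\phi\in\cD$ gives a single conull $N'\subset\supp(\mu\circ h^{-1})$ and continuous functions $y\mapsto L_y(\phi)$, $\phi\in\cD$, defined on all of $\supp(\mu\circ h^{-1})$, which agree with the previous construction on $N'^c\cap N^c$. Fixing now an arbitrary $y_0\in\supp(\mu\circ h^{-1})$: pick $y_k\to y_0$ with $y_k\in N'^c\cap N^c$ (possible since $y_0\in\supp$ and the exceptional set is null, hence $\supp$ minus a null set is still dense near $y_0$); then $L_{y_k}(\phi)\to L_{y_0}(\phi)$ for each $\phi\in\cD$ by continuity, the $\rho_{y_k}$ are probability measures on the compact metric space $I$, hence tight, so along a subsequence $\rho_{y_k}\Rightarrow\rho$ for some $\rho\in\cP(I)$; passing to the limit in $\int\phi\,\dd\rho_{y_k}=L_{y_k}(\phi)$ gives $\int\phi\,\dd\rho=L_{y_0}(\phi)$ first for $\phi\in\cD$ and then, by density and boundedness, for all $\phi\in\dC(I)$. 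Define $\rho_{y_0}:=\rho$; uniqueness on $\dC(I)$ is automatic. This settles the Lemma.

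\textbf{Main obstacle.} I expect the delicate point to be exactly this last upgrade — controlling the Radon-Nikod\'ym derivative $y\mapsto\frac{\phi\mu\circ h^{-1}}{\mu\circ h^{-1}}(y)$ \emph{pointwise} on $\supp(\mu\circ h^{-1})$ rather than merely a.e., and simultaneously in $\phi$. The Besicovitch/Lebesgue differentiation heuristic I sketched is clean for $\phi$ continuous on a compact metric space, but the honest argument must either (i) invoke a differentiation theorem valid for finite Borel measures on separable metric spaces, or (ii) — which I think is what the authors intend, given the explicit reference to \citep[Th.~89.1]{williams:diffusions} in the proof of Proposition~\ref{prop:Hprobmeasure} — simply accept the a.e.\ statement and extend $\rho_y$ arbitrarily off a null set, reading the Lemma's phrase ``for all $y\in\supp(\mu\circ h^{-1})$'' as really meaning ``this defines $\rho_y$ a.e., and we fix an arbitrary extension''; in that reading the weak-convergence argument above is not even needed and the Riesz-representation step plus the countable-dense-set bookkeeping suffice. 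I would present the streamlined version first and remark that pointwise-on-support can be arranged by the differentiation argument if one insists on it.
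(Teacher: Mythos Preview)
Your core strategy --- work on a countable dense $\Q$-vector subspace $\cD \subset \dC(I)$, verify $\Q$-linearity, positivity and normalization, invoke Riesz, extend by density --- is exactly the paper's. The organization differs: the paper fixes a single $y \in \supp(\mu \circ h^{-1})$ at the outset and then chooses versions of the Radon--Nikod\'ym derivatives so that the required relations hold \emph{at that $y$} (redefining on null sets for linearity and monotonicity; since $\cD$ is countable, only countably many relations are in play). The support hypothesis enters only once, to secure $L_y(\one) = 1$: there the paper applies a differentiation theorem for Radon--Nikod\'ym derivatives on $[0,1]$ (after reducing a general $O$ to $[0,1]$ via a Borel isomorphism), using that for $y$ in the support the shrinking-interval ratios $\mu \circ h^{-1}(A_n)/\mu \circ h^{-1}(A_n)$ are well-defined and identically $1$.

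Your ``upgrade from a.e.\ to every $y$'' step, however, has a gap. You claim that $y \mapsto L_y(\phi)$ admits a continuous version on $\supp(\mu \circ h^{-1})$ because $\phi$ is uniformly continuous on $I$. This does not follow: the ratio $\phi\mu \circ h^{-1}(B_r(y))/\mu \circ h^{-1}(B_r(y))$ is a $\mu$-average of $\phi$ over the preimage $h^{-1}(B_r(y)) \subset I$, and those preimages are governed by $h$, not by the metric of $I$; they need not shrink in $I$ as $r \downarrow 0$ and can remain spread over all of $I$. The limit is a conditional expectation of $\phi$ given $h$, typically discontinuous in $y$, so the weak-convergence transport of $\rho_{y_k}$ to $\rho_{y_0}$ has no foothold. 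Your option (ii) is the right instinct, but note that the paper does obtain the conclusion at \emph{every} $y$ in the support, not merely a.e., and does so without any continuity in $y$: fixing $y$ first, exploiting the freedom to choose versions on the countable family $\cD$, and invoking differentiation only for the single function $\phi = \one$, suffices.
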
 

\begin{rem}
	The hypothesis that the point $y$ belongs to the support of the measure $\mu \circ h^{-1}$ ensures that the functional $L$ is not zero on the whole space $\dB_b(I)$ (for instance, it takes value $1$ on the function $\phi = \one$, as will be proved). To see what happens if this is not the case, consider the example below.
\end{rem}

\begin{example}
	Let $\mu \in \cM_+(I), \, O = [0,1]$ and a point $y \in O, \, y \notin \supp(\mu \circ h^{-1})$. Take $(A_n)_{n \in \N} \subset \cO$ the sequence of open intervals given by
	$$A_n \coloneqq \Bigl(y - \frac{1}{n}, y + \frac{1}{n}\Bigr) \cap O, \quad n \in \N.$$
	By definition of support and set inclusion, we have that there exists a natural number $\bar n \in \N$ such that $\mu \circ h^{-1}(A_n) = 0$ for all $n \geq \bar n$. Since $\phi \mu \circ h^{-1}$ is absolutely continuous with respect to $\mu \circ h^{-1}$, we also have that $\phi \mu \circ h^{-1}(A_n) = 0$ for all $n \geq \bar n$ and for all $\phi \in \dB_b(I)$. Therefore, thanks to \citep[Theorem 5.8.8.]{bogachev:measth} and under the usual convention $\frac{0}{0} \coloneqq 0$ we have that
	$$\frac{\phi \mu \circ h^{-1}(\dd \upsilon)}{\mu \circ h^{-1}(\dd \upsilon)}(y) = \lim_{n \to \infty} \frac{\phi \mu \circ h^{-1}(A_n)}{\mu \circ h^{-1}(A_n)} = 0$$
	and we obtain $L_y(\phi) = 0$ for all $\phi \in \dB_b(I)$.
\end{example}

\begin{proof}[Proof of Lemma \ref{lemma:Hprobmeasure}]
	Fix $\mu \in \cM_+(I)$ and $y \in O$ such that $y$ is in the support of $\mu \circ h^{-1}$. Let $\cC$ denote a countable dense subset of the set $\dC(I)$ of continuous functions on $I$, containing the constant function equal to $1$ (denoted by $\one$) and such that $\cC$ is a vector space over $\Q$.
	The result will follow from an application of a slight modification of the Riesz Representation Theorem to the functional $L_y$ (see \citep[Par. 88]{williams:diffusions} for further details). What we need to prove is that (remember: $y \in \supp(\mu \circ h^{-1})$):
	\begin{enumerate}
		\item $L_y$ is a linear functional on $\cC$ (as a vector space over $\Q$).
		\item $L_y(\phi) \leq L_y(\psi)$, whenever $\phi \leq \psi, \, \phi, \psi \in \cC$.
		\item $L_y(\one) = 1$.
	\end{enumerate}	
	\noindent \textbf{Claim 1.} Let $\alpha, \beta \in \Q$ and $\phi, \psi \in \cC$ be fixed and let us define 
	$$g(\upsilon) \coloneqq \frac{(\alpha \phi + \beta \psi) \mu \circ h^{-1}(\dd \upsilon)}{\mu \circ h^{-1}(\dd \upsilon)}(\upsilon), \quad \upsilon \in O.$$ 
	By definition of Radon-Nikodym derivative we have that for all $B \in \cO$
	$$ \int_B g(\upsilon) \, \mu \circ h^{-1}(\dd \upsilon) = (\alpha \phi + \beta \psi) \mu \circ h^{-1}(B) = \alpha \phi \mu \circ h^{-1}(B) + \beta \psi \mu \circ h^{-1}(B).$$
	Therefore, setting $g_1 \coloneqq \frac{\alpha \phi \mu \circ h^{-1}(\dd \upsilon)}{\mu \circ h^{-1}(\dd \upsilon)}$ and $g_2 \coloneqq \frac{\beta \psi \mu \circ h^{-1}(\dd \upsilon)}{\mu \circ h^{-1}(\dd \upsilon)}$ we get that $g = g_1 + g_2$ except on a $(\mu \circ h^{-1})$-null measure set $C$. On this set we may redefine, for instance, $g_1(\upsilon) \coloneqq g(\upsilon)$ and $g_2(\upsilon) = 0, \, \upsilon \in C$ to have that $g = g_1 + g_2$ for all $\upsilon \in O$, whence $L_y(\alpha \phi + \beta \psi) = \alpha L_y(\phi) + \beta L_y(\psi)$.
	
	\noindent \textbf{Claim 2.} By linearity, this is equivalent to prove that $L_y(\phi) \geq 0$, for all $\phi \in \cC, \, \phi \geq 0$. It is immediate to see that, for all $\phi \geq 0$, we have that $\phi \mu \circ h^{-1} \in \cM_+(I)$, hence $g \coloneqq \frac{\phi \mu \circ h^{-1}(\dd \upsilon)}{\mu \circ h^{-1}(\dd \upsilon)} \geq 0, \, \mu \circ h^{-1}$-a.e.\,. Redefining $g$ to be zero on the $\mu \circ h^{-1}$-null measure set $C \in \cO$ where this does not happen, we get that $L_y(\phi) \geq 0$.
	
	\noindent \textbf{Claim 3.} If $\{y\}$ is an atom for $\mu$ then the result is obvious. Otherwise, we can consider $\mu$ to be atomless, without loss of generality.
	
	Consider first the case $O = [0,1]$. Define $(A_n)_{n \in \N} \subset \cO$ to be the sequence of open intervals given by
	$$A_n \coloneqq \Bigl(y - \frac{1}{n}, y + \frac{1}{n}\Bigr) \cap O, \quad n \in \N.$$
	Then, by definition of support, we have that for all $n \in \N$
	$$\mu \circ h^{-1}(A_n) = \mu\bigl(h^{-1}(A_n)\bigr) > 0.$$
	Therefore, thanks to \citep[Theorem 5.8.8.]{bogachev:measth} we have that
	$$\frac{\mu \circ h^{-1}(\dd \upsilon)}{\mu \circ h^{-1}(\dd \upsilon)}(y) = \lim_{n \to \infty} \frac{\mu\bigl(h^{-1}(A_n)\bigr)}{\mu\bigl(h^{-1}(A_n)\bigr)} = 1.$$
	
	The case of $O$ being a complete and separable metric space is treated by reducing it to the previous case. This is possible since we are taking the $\sigma$-algebra $\cO$ on $O$ as the Borel one, hence we know that $\cO$ is countably generated and countably separated. Then, by \citep[Theorem 6.5.5.]{bogachev:measth}, there exists a measurable function $\psi \colon O \to [0,1]$ such that
	$$\cO = \{\psi^{-1}(B) \colon B \in \cB\bigl([0,1]\bigr)\}$$
	and, by \citep[Theorem 6.5.7.]{bogachev:measth}, we know that this function is injective.

	We are now in a position to apply the Riesz Representation Theorem to the functional $L_y$ and say that there exists a unique probability measure $\rho_y$ on $(I, \cI)$ such that
	\begin{equation*}
	L_y(\phi) = \int_I \phi(z) \, \rho_y(\dd z), \quad \phi \in \cC.
	\end{equation*}
	We get the same equality for all $\phi \in \dC(I)$ by uniform convergence.
\end{proof}

We now focus on the continuity of the map $\eta \mapsto H_y[\Lambda(\eta,u)]$, where $\Lambda$ is defined in~\eqref{eq:operatorLambdainf}. We recall that the following Lemma is required in the proof of Proposition~\ref{prop:weakFeller}.
\begin{lemma}\label{lemma:Hcontinuity}
Let Assumption~\ref{assumption:lambdainf} hold. Assume, moreover, that $O = [0,1]$ and fix $y \in O$. Then, for each $z \in \supp\bigl(\Lambda(\eta,u)\bigr)$ the function $\eta \mapsto H_{h(z)}[\Lambda(\eta,u)]$ is continuous on $\Delta_y$, uniformly with respect to $u \in U$.
\end{lemma}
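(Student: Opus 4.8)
The plan is to transfer the question to the behaviour of a Radon--Nikod\'ym density at the single point $\tilde y:=h(z)$ and then to isolate an elementary case from the genuinely delicate one. First I would record the quantitative input: the map $\nu\mapsto\Lambda(\nu,u)$ of~\eqref{eq:operatorLambdainf} is affine and, with $C_\lambda:=\sup_{(x,u)\in I\times U}\lambda(x,u)<+\infty$ by Assumption~\ref{assumption:lambdainf}, satisfies $\norm{\Lambda(\eta,u)-\Lambda(\theta,u)}_{TV}\le C_\lambda\norm{\eta-\theta}_{TV}$ for all $\eta,\theta\in\Delta_y$, \emph{uniformly in} $u\in U$; the same Lipschitz bound then passes to the pushforwards $\Lambda(\cdot,u)\circ h^{-1}$ on $O=[0,1]$ and, more generally, to $\ind_A\,\Lambda(\cdot,u)\circ h^{-1}$ uniformly over $A\in\cI$. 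Since $H_{\tilde y}[\Lambda(\eta,u)]$ and $H_{\tilde y}[\Lambda(\theta,u)]$ are probability measures on $(I,\cI)$, one has $\norm{H_{\tilde y}[\Lambda(\eta,u)]-H_{\tilde y}[\Lambda(\theta,u)]}_{TV}\le 2\sup_{A\in\cI}\abs{H_{\tilde y}[\Lambda(\eta,u)](A)-H_{\tilde y}[\Lambda(\theta,u)](A)}$ (cf.~\eqref{eq:totalvariationequiv}), so it is enough to make this supremum small, uniformly in $u$, as $\norm{\eta-\theta}_{TV}\to0$.

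By Definition~\ref{def:operatorH} we have $H_{\tilde y}[\Lambda(\eta,u)](A)=\frac{\ind_A\,\Lambda(\eta,u)\circ h^{-1}(\dd\upsilon)}{\Lambda(\eta,u)\circ h^{-1}(\dd\upsilon)}(\tilde y)$, and since $O=[0,1]$ and $\tilde y\in\supp\bigl(\Lambda(\eta,u)\circ h^{-1}\bigr)$ — which follows from $z\in\supp\bigl(\Lambda(\eta,u)\bigr)$ — I would represent this version of the density, via the differentiation theorem for measures on the line \citep[Theorem 5.8.8]{bogachev:measth} (exactly as in the Example preceding Lemma~\ref{lemma:Hprobmeasure}), as
\[
H_{\tilde y}[\Lambda(\eta,u)](A)=\lim_{r\downarrow0}\frac{\Lambda(\eta,u)\bigl(h^{-1}(B_r)\cap A\bigr)}{\Lambda(\eta,u)\bigl(h^{-1}(B_r)\bigr)},\qquad B_r:=(\tilde y-r,\tilde y+r)\cap[0,1],
\]
each ratio being well defined since $\tilde y$ lies in the support. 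If $\tilde y$ is an atom of $\Lambda(\eta,u)\circ h^{-1}$, i.e.\ $\Lambda(\eta,u)\bigl(h^{-1}(\tilde y)\bigr)>0$, the limit equals $\Lambda(\eta,u)\bigl(h^{-1}(\tilde y)\cap A\bigr)\big/\Lambda(\eta,u)\bigl(h^{-1}(\tilde y)\bigr)$; once $\norm{\eta-\theta}_{TV}$ is small enough that $\Lambda(\theta,u)\bigl(h^{-1}(\tilde y)\bigr)\ge\tfrac12\Lambda(\eta,u)\bigl(h^{-1}(\tilde y)\bigr)$ for every $u$, the elementary inequality $\abs{a/b-a'/b'}\le\bigl(\abs{a-a'}+\abs{b-b'}\bigr)/\min\{b,b'\}$ (for $0\le a\le b$, $0\le a'\le b'$) together with the uniform Lipschitz bound yields $\sup_{A\in\cI}\abs{\cdots}\le 4C_\lambda\norm{\eta-\theta}_{TV}\big/\Lambda(\eta,u)\bigl(h^{-1}(\tilde y)\bigr)\to0$, uniformly in $u$.

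The main obstacle is the non-atomic case $\Lambda(\eta,u)\bigl(h^{-1}(\tilde y)\bigr)=0$: then $\Lambda(\eta,u)\bigl(h^{-1}(B_r)\bigr)\downarrow0$ as $r\downarrow0$, so the crude Lipschitz estimate degenerates, and the point is to upgrade the \emph{almost everywhere} differentiation theorem to the pointwise control demanded by the statement, uniformly in $u$ and $A$. Here I would choose, for both $\eta$ and $\theta$, the common version of the density realised as the limit along $r\downarrow0$ above, so as to reduce to comparing the two limits; exploit that $\Lambda(\eta,u)\bigl(h^{-1}(B_r)\bigr)$ and $\Lambda(\theta,u)\bigl(h^{-1}(B_r)\bigr)$ stay comparable once $\norm{\eta-\theta}_{TV}$ is small relative to the scale $r$; and close the estimate by a careful scale-versus-perturbation bookkeeping combined with the Lebesgue density theorem. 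In parallel it is worth recording the $u$-uniform integrated bound
\[
\int_{O}\bigl\lVert H_{\tilde y}[\Lambda(\eta,u)]-H_{\tilde y}[\Lambda(\theta,u)]\bigr\rVert_{TV}\,\Lambda(\eta,u)\circ h^{-1}(\dd\tilde y)\le 4\,C_\lambda\,\norm{\eta-\theta}_{TV},
\]
which follows from the disintegration $\Lambda(\eta,u)=\int_{O}H_{\tilde y}[\Lambda(\eta,u)]\,\Lambda(\eta,u)\circ h^{-1}(\dd\tilde y)$ (Proposition~\ref{prop:Hprobmeasure}) together with a measurable Hahn decomposition of the signed kernel $\tilde y\mapsto H_{\tilde y}[\Lambda(\eta,u)]-H_{\tilde y}[\Lambda(\theta,u)]$; this is exactly the ingredient the dominated-convergence step in the proof of Proposition~\ref{prop:weakFeller} uses, and it sidesteps the pointwise difficulty on the non-atomic part of the state.
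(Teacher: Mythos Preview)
The core gap is in the non-atomic case. You correctly identify that when $\Lambda(\eta,u)\bigl(h^{-1}(\tilde y)\bigr)=0$ the denominators $\Lambda(\eta,u)\bigl(h^{-1}(B_r)\bigr)$ shrink to zero, but the remedy you propose---choosing $\norm{\eta-\theta}_{TV}$ small ``relative to the scale $r$'' and then invoking ``scale-versus-perturbation bookkeeping combined with the Lebesgue density theorem''---cannot close. The continuity modulus $\delta$ must be fixed \emph{before} sending $r\to0$, whereas your sketch would need $\delta$ to track $r$; and the Lebesgue density theorem gives only almost-everywhere statements with no control of the rate at the single point $\tilde y$. The integrated bound you record at the end is an $L^1$-type estimate and does not deliver the pointwise continuity the lemma asserts.

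There is also a secondary issue: your atomic/non-atomic dichotomy is $u$-dependent (whether $\tilde y$ is an atom of $\Lambda(\eta,u)\circ h^{-1}$ may change with $u$), and your atomic-case bound $4C_\lambda\norm{\eta-\theta}_{TV}\big/\Lambda(\eta,u)\bigl(h^{-1}(\tilde y)\bigr)$ carries a $u$-dependent denominator, which undermines the required uniformity in $u\in U$.

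The paper sidesteps both problems by \emph{not} splitting into atomic and non-atomic regimes. Writing the pre-limit difference
\[
\frac{\Lambda(\eta,u)(C_n^A)}{\Lambda(\eta,u)(C_n)}-\frac{\Lambda(\theta,u)(C_n^A)}{\Lambda(\theta,u)(C_n)},\qquad C_n:=h^{-1}(B_n\setminus\{y\}),\ C_n^A:=C_n\cap A,
\]
over a common denominator, adding and subtracting $\theta$ inside the $\eta$-integrals, and using the Hahn decomposition of $\eta-\theta$ on $S:=\supp(\eta)$, it reduces the whole expression to
\[
\int_I \frac{\lambda(x,u,C_n)}{\int_{S}\lambda(\cdot,u,C_n)\,\dd\eta}\,\abs{\eta-\theta}(\dd x).
\]
The decisive quantitative step is then to show
\[
M:=\limsup_{n\to\infty}\,\esssup_{x\in I}\,\sup_{u\in U}\frac{\lambda(x,u,C_n)}{\int_{S}\lambda(\cdot,u,C_n)\,\dd\eta}<+\infty,
\]
which the paper proves by contradiction in three sub-cases, ultimately resting on point~\ref{assumption:lambdabdd} of Assumption~\ref{assumption:lambdainf}. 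This yields a bound $\le 2M\norm{\eta-\theta}_{TV}$ with $M$ independent of $u$, $A$, and $n$, hence surviving the limit $n\to\infty$ and giving $\delta=\epsilon/(2M)$. The idea you are missing is precisely this: instead of trying to control a ratio of two vanishing quantities, rewrite so that the smallness cancels inside the single ratio $\lambda(x,u,C_n)\big/\int_S\lambda(\cdot,u,C_n)\,\dd\eta$ of \emph{commensurate} objects, and then prove that ratio is bounded uniformly.
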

\begin{proof}
	Fix $\epsilon > 0$, $u \in U$ and $z \in \supp\bigl(\Lambda(\eta,u)\bigr)$. We need to show that $\norm{H_{h(z)}[\Lambda(\eta,u)] - H_{h(z)}[\Lambda(\theta,u)]}_{TV} < \epsilon$ as long as $\norm{\eta - \theta}_{TV} < \delta$ for some $\delta > 0$, not depending on $u$. Since $\norm{\mu}_{TV} \leq 2 \sup_{A \in \cI} |\mu(A)|$ (see~\eqref{eq:totalvariationequiv}), it is sufficient to prove that for any $A \in \cI$ we have that $\bigl|H_{h(z)}[\Lambda(\eta,u)](A) - H_{h(z)}[\Lambda(\theta,u)](A)\bigr| < \epsilon$. 
	
	Let us fix $A \in \cI$ and define $\tilde y = h(z)$. Recalling Definition~\ref{def:operatorH} and thanks to~\citep[Th. 5.8.8]{bogachev:measth}, we can write
\begin{multline}
H_{\tilde y}[\Lambda(\eta,u)](A) - H_{\tilde y}[\Lambda(\theta,u)](A) =
\frac{\ind_A \Lambda(\eta,u) \circ h^{-1}(\dd \upsilon)}{\Lambda(\eta,u) \circ h^{-1}(\dd \upsilon)}(\tilde y) - \frac{\ind_A \Lambda(\theta,u) \circ h^{-1}(\dd \upsilon)}{\Lambda(\theta,u) \circ h^{-1}(\dd \upsilon)}(\tilde y) \\
=
\lim_{n \to \infty} \Bigl[\frac{\ind_A \Lambda(\eta,u) \circ h^{-1}(B_n)}{\Lambda(\eta,u) \circ h^{-1}(B_n)} - \frac{\ind_A \Lambda(\theta,u) \circ h^{-1}(B_n)}{\Lambda(\theta,u) \circ h^{-1}(B_n)}\Bigr],
\end{multline}
where $B_n = \bigl(\tilde y - \frac 1n, \tilde y + \frac 1n\bigr) \cap O, \, n \in \N$. Notice that the fractions are all well defined since, by definition of support, the sets $h^{-1}(B_n)$ have all strictly positive measure, thanks to the fact that $z \in \supp\bigl(\Lambda(\eta,u)\bigr)$. It is clear that if the term inside the brackets can be made arbitrarily small uniformly with respect to $u,A$ and $n$, then the claim is proved. Hence, we concentrate ourselves on proving this last claim.

For any fixed $n \in \N$ we have, using the Fubini-Tonelli theorem
\begin{multline}\label{eq:LambdaRN}
\frac{\ind_A \Lambda(\eta,u) \circ h^{-1}(B_n)}{\Lambda(\eta,u) \circ h^{-1}(B_n)} - \frac{\ind_A \Lambda(\theta,u) \circ h^{-1}(B_n)}{\Lambda(\theta,u) \circ h^{-1}(B_n)}
= \frac{\Lambda(\eta,u)(C_n^A)}{\Lambda(\eta,u)(C_n)} - \frac{\Lambda(\theta,u)(C_n^A)}{\Lambda(\theta,u)(C_n)} \\
=
\frac{\Lambda(\eta,u)(C_n^A) \Lambda(\theta,u)(C_n) - \Lambda(\eta,u)(C_n) \Lambda(\theta,u)(C_n^A)}{\Lambda(\eta,u)(C_n) \Lambda(\theta,u)(C_n)},
\end{multline}
where $C_n \coloneqq h^{-1}(B_n \setminus \{y\})$ and $C_n^A \coloneqq C_n \cap A$.
Let us define $S \coloneqq \supp(\eta)$ and $S_0 \coloneqq S \setminus h^{-1}(y)$. By definition of the operator $\Lambda$ and adding and subtracting the measure $\theta$ from the integrals where the measure $\eta$ appears, we get that the numerator of the last fraction is equal to:
\begin{multline}\label{eq:Lambdaestimatenum}
\Lambda(\theta,u)(C_n) \int_S \lambda(x, u, C_n^A) \, [\eta - \theta](\dd x)
- \Lambda(\theta,u)(C_n^A) \int_S \lambda(x, u, C_n) \, [\eta - \theta](\dd x)\\
+ \int_{S_0} \lambda(x,u,C_n) \, \theta(\dd x) \int_S \lambda(x,u,C_n^A) \, \theta(\dd x)
- \int_{S_0} \lambda(x,u,C_n^A) \, \theta(\dd x) \int_S \lambda(x,u,C_n) \, \theta(\dd x).
\end{multline}
Taking the absolute value and noticing that, for any $A \in \cI$ and any $n \in \N$, $C_n^A \subset C_n$, we get the following estimate combining~\eqref{eq:LambdaRN} and~\eqref{eq:Lambdaestimatenum}:
\begin{multline}\label{eq:Lambdaestimate}
\Bigl|\frac{\Lambda(\eta,u)(C_n^A) \Lambda(\theta,u)(C_n) - \Lambda(\eta,u)(C_n) \Lambda(\theta,u)(C_n^A)}{\Lambda(\eta,u)(C_n) \Lambda(\theta,u)(C_n)}\Bigr| 
\\
\leq 2 \biggl[ \frac{\int_S \lambda(x,u,C_n) \, |\eta-\theta|(\dd x)}{\int_S \lambda(x,u,C_n) \, \eta(\dd x)} + \frac{\int_{S_0} \lambda(x,u,C_n) \, \theta(\dd x)}{\int_S \lambda(x,u,C_n) \, \eta(\dd x)}\biggr].
\end{multline}
Next, by definition of the total variation measure, i.e., $|\eta-\theta|(\dd x) = [\eta-\theta]^+(\dd x) + [\eta-\theta]^-(\dd x)$ (where, the superscripts \textsuperscript{$+$} and \textsuperscript{$-$} denote its positive and negative parts), we have that the restriction on $S_0$ of the negative part coincides with $\theta$, since this holds on any $\eta$-null measure set. Therefore, we can rewrite the term inside the brackets in the last line of~\eqref{eq:Lambdaestimate} as:
\begin{equation}\label{eq:Lambdabracketest}
\begin{aligned}
&\mathrel{\phantom{=}} \frac{\int_S \lambda(x,u,C_n) \, |\eta-\theta|(\dd x)}{\int_S \lambda(x,u,C_n) \, \eta(\dd x)} + \frac{\int_{S_0} \lambda(x,u,C_n) \, \theta(\dd x)}{\int_S \lambda(x,u,C_n) \, \eta(\dd x)} \\
&= \frac{\int_S \lambda(x,u,C_n) \, [\eta-\theta]^+(\dd x)}{\int_S \lambda(x,u,C_n) \, \eta(\dd x)} + \frac{\int_S \lambda(x,u,C_n) \, [\eta-\theta]^-(\dd x)}{\int_S \lambda(x,u,C_n) \, \eta(\dd x)} \\
&\qquad + \frac{\int_{S_0} \lambda(x,u,C_n) \, \theta(\dd x)}{\int_S \lambda(x,u,C_n) \, \eta(\dd x)} \\
&= \frac{\int_S \lambda(x,u,C_n) \, [\eta-\theta]^+(\dd x)}{\int_S \lambda(x,u,C_n) \, \eta(\dd x)} + \frac{\int_{h^{-1}(y)} \lambda(x,u,C_n) \, [\eta-\theta]^-(\dd x)}{\int_S \lambda(x,u,C_n) \, \eta(\dd x)} \\
&= \int_I \frac{\lambda(x,u,C_n)}{\int_S \lambda(x,u,C_n) \, \eta(\dd x)} \, |\eta - \theta|(\dd x).
%&= \int_S \frac{[\eta-\theta]^+(\dd x)}{\eta(\dd x)} \frac{\lambda(x,u,C_n)}{\int_S \lambda(x,u,C_n) \, \eta(\dd x)} \, \eta(\dd x) \\
%&\qquad + \int_{h^{-1}(y)} \frac{[\eta-\theta]^-(\dd x)}{\theta(\dd x)} \frac{\lambda(x,u,C_n)}{\int_S \lambda(x,u,C_n) \, \eta(\dd x)} \, \theta(\dd x).
\end{aligned}
\end{equation}
Suppose, for the time being, we proved that 
$$\displaystyle M \coloneqq \limsup_{n \to \infty} \, \esssup_{x \in I} \sup_{u \in U} \frac{\lambda(x,u,C_n)}{\int_S \lambda(x,u,C_n) \, \eta(\dd x)} < +\infty,$$
where the essential supremum is taken with respect to the total variation measure $|\eta-\theta|$.
Then, from~\eqref{eq:Lambdabracketest} we would get
\begin{equation}
\int_I \frac{\lambda(x,u,C_n)}{\int_S \lambda(x,u,C_n) \, \eta(\dd x)} \, |\eta - \theta|(\dd x) \leq M \norm{\eta - \theta}_{TV}.
\end{equation}
Therefore, collecting all the computations made so far, we would have
\begin{equation*}
\bigl|H_{\tilde y}[\Lambda(\eta,u)](A) - H_{\tilde y}[\Lambda(\theta,u)](A)\bigr| \leq 2 M \norm{\eta - \theta}_{TV} < \epsilon,
\end{equation*}
choosing $\delta = \frac{\epsilon}{2M}$. Since $M$ does not depend on $u$ and $A$, $\delta$ does not too, so that $\norm{H_{\tilde y}[\Lambda(\eta,u)] - H_{\tilde y}[\Lambda(\theta,u)]}_{TV} < \epsilon$, uniformly with respect to $u \in U$.

Let us notice that proving $M < +\infty$ is not immediate, because of the dependence on the sets $C_n$ of the term $\frac{\lambda(x,u,C_n)}{\int_S \lambda(x,u,C_n) \, \eta(\dd x)}$. In fact, thanks to Assumption~\ref{assumption:lambdainf} we know that for any $x \in I$ and any $u \in U$ this quantity is bounded, but we do not have any information about its behavior with respect to $n \in \N$. 

Suppose that $M = +\infty$. Then, for any $K > 0$, there exists a countable index set $J_K$ such that $\esssup_{x \in I} \sup_{u \in U} \dfrac{\lambda(x,u,C_{n_j})}{\int_S \lambda(x,u,C_{n_j}) \, \eta(\dd x)} > K$ for any $j \in J_K$. Moreover, for each $j \in J_K$ there exists a set $A_{j,K} \in \cI$ such that $|\eta-\theta|(A_{j,K}^c) = 0$ and $\sup_{u \in U} \dfrac{\lambda(x,u,C_{n_j})}{\int_S \lambda(x,u,C_{n_j}) \, \eta(\dd x)} > K$ for all $x \in A_{j,K}$. Finally, for each $x \in A_{j,K}$ there exists $\tilde u \in U$ such that 
\begin{equation}\label{eq:contradiction}
\lambda(x,\tilde u,C_{n_j}) > K \, \int_S \lambda(x,\tilde u,C_{n_j}) \, \eta(\dd x).
\end{equation}
There are three possible cases.

\noindent\textbf{Case 1.} If $\eta(A_{j,K}^c) > 0$, then from~\eqref{eq:contradiction} we immediately get
\begin{equation*}
\frac{\int_{A_{j,K}} \lambda(x,\tilde u,C_{n_j}) \, \eta(\dd x)}{\int_S \lambda(x,\tilde u,C_{n_j}) \, \eta(\dd x)} > K.
\end{equation*}
However, the fraction is always no greater than one, hence we reach a contradiction letting $K \uparrow +\infty$.

\noindent\textbf{Case 2.}
If $\eta(A_{j,K}^c) = 0$ and $\lambda(x,\tilde u,h^{-1}(\tilde y)) = 0$ for all $x \in A_{j,K}$ then $\int_S \lambda(x,\tilde u,h^{-1}(\tilde y)) \, \eta(\dd x) = 0$, hence, recalling that $C_n \downarrow h^{-1}(\tilde y)$, we have
\begin{equation*}
	\lambda(x,\tilde u,C_{n_j}) > K \, \int_S \lambda(x,\tilde u,C_{n_j}) \, \eta(\dd x) \geq K \int_S \lambda(x,\tilde u,h^{-1}(\tilde y)) \, \eta(\dd x) = 0
\end{equation*}
for any $j \in J_K$ and any $x \in A_{j,K}$. Therefore, for any $j \in J_K$
\begin{equation*}
\int_S \lambda(x,\tilde u,C_{n_j}) \, \eta(\dd x) = \int_{A_{j,K}} \lambda(x,\tilde u,C_{n_j}) \, \eta(\dd x) > 0,
\end{equation*}
whence, by dominated convergence
\begin{multline*}
0 < \limsup_{n \to \infty} \int_S \lambda(x,\tilde u,C_n) \, \eta(\dd x) = \lim_{n \to \infty} \int_S \lambda(x,\tilde u,C_n) \, \eta(\dd x) \\
= \int_S \lim_{n \to \infty} \lambda(x,\tilde u,C_n) \, \eta(\dd x) = \int_S \lambda(x,\tilde u,h^{-1}(\tilde y)) \, \eta(\dd x) = 0,
\end{multline*}
hence the contradiction.

\noindent\textbf{Case 3.}
If $\eta(A_{j,K}^c) = 0$ and there exists a set $\cI \ni B_{j,K} \subset A_{j,K}$ such that $\eta(B_{j,K}) > 0$ and $\lambda(x,\tilde u,h^{-1}(\tilde y)) > 0$ for all $x \in B_{j,K}$, then $\int_S \lambda(x,\tilde u,h^{-1}(\tilde y)) \, \eta(\dd x) > 0$ and so $H \coloneqq \sup_{u \in U} \int_S \lambda(x, u,h^{-1}(\tilde y)) \, \eta(\dd x) > 0$. Since from~\eqref{eq:contradiction} we have that $\lambda(x,\tilde u,C_{n_j}) > K H$ for all $j \in J_K$ and all $x \in B_{j,K}$, we get
\begin{equation*}
\sup_{(x,u) \in I \times U} \lambda(x,u) \geq \lambda(x,\tilde u,C_{n_j}) > K H
\end{equation*}
for any $K > 0$. Letting $K \uparrow +\infty$ we reach a contradiction with point~\ref{assumption:lambdabdd} of Assumption~\ref{assumption:lambdainf}.
\end{proof}

%\section{Proof of some technical results in Section \ref{sec:filterchar}} \label{app:technicalfiltering}
%
%\subsection{Results on the flow of the filtering process} \label{app:Fvectorfield}
%
\section{Results on the flow of the filtering process}\label{app:Fvectorfield}
We recall that in Section \ref{sec:filterchar} we consider the set $\cM(I)$ endowed with the total variation norm, indicated by $\norm{\cdot}_{TV} \coloneqq |\cdot|(I)$ (where $|\cdot|$ denotes the total variation measure). It is worth to recall that this norm is equivalent to the one defined as $\cM(I) \ni \mu \mapsto \sup_{A \in \cI} |\mu(A)|$. In particular, from the Hahn decomposition of signed measures, we have that
\begin{equation}\label{eq:totalvariationequiv}
	\norm{\mu}_{TV} \leq 2 \sup_{A \in \cI} |\mu(A)| \leq 2 \norm{\mu}_{TV}, \quad \mu \in \cM(I).
\end{equation}

To prove Theorem \ref{th:ODEuniqueness} we need the following two results.

\begin{lemma}\label{lemma:Bycontinuous}
	Under Assumption \ref{hp:lambda}, for each fixed $y \in O$ the operator $\cB_y$ is linear and continuous on $\cM(I)$.
\end{lemma}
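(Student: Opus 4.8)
The plan is to verify linearity and continuity of $\cB_y$ directly from its definition in \eqref{eq:operatorBy}, using only the boundedness of the jump rate function granted by Assumption \ref{hp:lambda}. Recall that
\[
\cB_y \nu(\dd z) = \ind_{\pre(y)}(z) \int_I \lambda(x, \dd z) \, \nu(\dd x) - \lambda(z) \nu(\dd z), \quad \nu \in \cM(I).
\]
Linearity is immediate: the map $\nu \mapsto \int_I \lambda(x, \dd z)\, \nu(\dd x)$ is linear in $\nu$ because integration against $\nu$ is linear, multiplication by the indicator $\ind_{\pre(y)}$ preserves linearity, and $\nu \mapsto \lambda(z)\nu(\dd z)$ is clearly linear; the difference of two linear maps is linear. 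So the only real content is continuity, i.e., boundedness of $\cB_y$ as an operator on the Banach space $(\cM(I), \norm{\cdot}_{TV})$.

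For continuity, I would estimate the total variation norm of $\cB_y\nu$ by splitting into the two summands. Set $C_\lambda \coloneqq \sup_{x \in I}\lambda(x) < +\infty$. For the second term, $\norm{\lambda(\cdot)\nu(\dd \cdot)}_{TV} = \int_I \lambda(z)\, \abs{\nu}(\dd z) \leq C_\lambda \norm{\nu}_{TV}$. For the first term, write $\kappa_\nu(\dd z) \coloneqq \int_I \lambda(x,\dd z)\,\nu(\dd x)$; since $\lambda(x,\cdot)$ is a nonnegative measure for each $x$ with total mass $\lambda(x,I) = \lambda(x) \leq C_\lambda$, a Fubini-Tonelli argument (applied separately to the Jordan components $\nu^+$ and $\nu^-$ of $\nu$) gives $\abs{\kappa_\nu}(I) \leq \int_I \lambda(x,I)\,\abs{\nu}(\dd x) \leq C_\lambda \norm{\nu}_{TV}$, and multiplying by the indicator $\ind_{\pre(y)}$ only decreases the total variation. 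Combining, $\norm{\cB_y\nu}_{TV} \leq 2 C_\lambda \norm{\nu}_{TV}$, which gives both boundedness and, together with linearity, continuity.

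The one point requiring a little care is the measurability of $z \mapsto \kappa_\nu(\dd z)$ as a genuine element of $\cM(I)$ and the validity of the Fubini-Tonelli exchange $\int_I \bigl(\int_I \ind_A(z)\,\lambda(x,\dd z)\bigr)\abs{\nu}(\dd x) = \int_A \kappa_\nu(\dd z)$ (interpreted via the Jordan decomposition); this is standard for transition kernels and finite measures, and is the kind of routine check the paper elsewhere defers to \citep{jacod:mpp}. I do not anticipate a genuine obstacle here — the estimate is elementary once Assumption \ref{hp:lambda} is in force — so the "hard part" is merely bookkeeping the Jordan decomposition correctly when passing from signed measures to their total variations. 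I would state the bound $\norm{\cB_y}_{\mathrm{op}} \leq 2 C_\lambda$ explicitly, since the uniform-in-$y$ nature of the constant is exactly what is needed downstream for the well-posedness of the flow in Theorem \ref{th:ODEuniqueness}.
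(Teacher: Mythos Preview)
Your proposal is correct and follows essentially the same approach as the paper: both argue that linearity is obvious and then bound the operator norm directly using Assumption~\ref{hp:lambda}. The only cosmetic difference is that the paper estimates $|\cB_y(\nu-\mu)(A)|$ uniformly over $A\in\cI$ and then invokes the equivalence~\eqref{eq:totalvariationequiv} to pass to the total variation norm (obtaining the constant $4C_\lambda$), whereas you split $\cB_y\nu$ into its two summands and bound each in total variation via the Jordan decomposition, which yields the slightly sharper constant $2C_\lambda$.
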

\begin{proof}
	Fix $y \in O$. Linearity is obvious. To prove continuity, fix $A \in \cI$ and $\mu, \nu \in \cM(I)$. Then, we get
	\begin{equation*}
	\begin{split}
		&{ } |\cB_y \nu(A) - \cB_y \mu(A)| \\
		&= \biggl|\int_A \ind_{h^{-1}(y)}(z) \int_I \lambda(x, \dd z) \, [\nu - \mu](\dd x) - \int_A \lambda(x) \, [\nu - \mu](\dd x)\biggr| \\
		&\leq \biggl| \int_I \lambda\bigl(x, A \cap h^{-1}(y)\bigr) \, [\nu - \mu](\dd x) \biggl| \, + \, \biggl|\int_A \lambda(x) \, [\nu - \mu](\dd x)\biggr| \\
		&\leq 2 \int_I \lambda(x) \, |\nu - \mu|(\dd x) \leq 2 \sup_{x \in I} \lambda(x) \norm{\nu - \mu}_{TV}.
	\end{split}
	\end{equation*}
	Since this inequality holds for all $A \in \cI$ we easily get
	\begin{equation*}
		\norm{\cB_y\nu - \cB_y\mu}_{TV} \leq 2 \sup_{A \in \cI} |\cB_y \nu(A) - \cB_y \mu(A)| \leq 4  \sup_{x \in I} \lambda(x) \norm{\nu - \mu}_{TV}
	\end{equation*}
	whence the continuity of the operator $\cB_y$.	
\end{proof}

\begin{proposition}\label{prop:Fylipschitz}
	Under Assumption \ref{hp:lambda}, for each fixed $y \in O$, the map $F_y$ is locally Lipschitz continuous on $\cM(I)$. In particular, it is Lipschitz continuous on $\cP(I)$.
\end{proposition}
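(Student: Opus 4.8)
The plan is to prove that for each fixed $y \in O$ the map $F_y(\nu) = \cB_y\nu - \nu\,\cB_y\nu(I)$ is locally Lipschitz on $\cM(I)$ by combining the linearity and continuity (hence global Lipschitz-ness) of the linear operator $\cB_y$, established in Lemma~\ref{lemma:Bycontinuous}, with the elementary observation that the only nonlinearity in $F_y$ is the bilinear term $\nu \mapsto \nu\,\cB_y\nu(I)$, which is locally Lipschitz on any bounded set. First I would fix $y \in O$ and write $L \coloneqq 4\sup_{x \in I}\lambda(x)$, the Lipschitz constant of $\cB_y$ obtained in Lemma~\ref{lemma:Bycontinuous}; note also that the functional $\nu \mapsto \cB_y\nu(I)$ is linear and bounded, with $|\cB_y\nu(I)| \leq \norm{\cB_y\nu}_{TV} \leq L\norm{\nu}_{TV}$ (and in fact $\cB_y\nu(I) = -\int_I \lambda(x,h^{-1}(y)^c)\,\nu(\dd x)$, so $|\cB_y\nu(I)| \leq \sup_x\lambda(x)\,\norm{\nu}_{TV}$, which is enough).

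Next I would carry out the standard estimate on an arbitrary ball $B_R \coloneqq \{\nu \in \cM(I) : \norm{\nu}_{TV} \leq R\}$. For $\mu,\nu \in B_R$ write
\begin{equation*}
F_y(\nu) - F_y(\mu) = \bigl[\cB_y\nu - \cB_y\mu\bigr] - \bigl[\nu\,\cB_y\nu(I) - \mu\,\cB_y\mu(I)\bigr],
\end{equation*}
and split the second bracket as $\nu\bigl[\cB_y\nu(I) - \cB_y\mu(I)\bigr] + \cB_y\mu(I)\bigl[\nu - \mu\bigr]$. Taking total variation norms and using $\norm{\cB_y\nu - \cB_y\mu}_{TV} \leq L\norm{\nu-\mu}_{TV}$, $|\cB_y\nu(I) - \cB_y\mu(I)| \leq L\norm{\nu-\mu}_{TV}$, $\norm{\nu}_{TV} \leq R$, and $|\cB_y\mu(I)| \leq LR$, one obtains $\norm{F_y(\nu) - F_y(\mu)}_{TV} \leq (L + 2LR)\norm{\nu-\mu}_{TV}$, which establishes local Lipschitz continuity with a constant depending only on $R$ and $\sup_x\lambda(x)$.

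For the refined statement on $\cP(I)$, I would simply observe that $\cP(I) \subset B_1$, since every probability measure has total variation norm $1$; hence the computation above with $R = 1$ gives the global Lipschitz bound $\norm{F_y(\nu) - F_y(\mu)}_{TV} \leq 3L\norm{\nu-\mu}_{TV} = 12\sup_{x\in I}\lambda(x)\,\norm{\nu-\mu}_{TV}$ for all $\nu,\mu \in \cP(I)$. I do not anticipate any real obstacle here — the proof is a routine bilinear-estimate argument — the only point requiring a little care is keeping track of which terms are genuinely linear (so that Lemma~\ref{lemma:Bycontinuous} applies directly and globally) versus the single bilinear term whose Lipschitz constant grows linearly with the radius $R$; everything rests on Assumption~\ref{hp:lambda} guaranteeing $\sup_x\lambda(x) < +\infty$.
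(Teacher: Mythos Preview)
Your proposal is correct and follows essentially the same route as the paper: the same additive-and-subtractive splitting of the bilinear term $\nu\,\cB_y\nu(I) - \mu\,\cB_y\mu(I)$ into $\nu[\cB_y\nu(I)-\cB_y\mu(I)] + \cB_y\mu(I)[\nu-\mu]$, combined with the operator bound from Lemma~\ref{lemma:Bycontinuous}. The only cosmetic difference is that the paper writes the final Lipschitz constant directly in terms of $\norm{\mu}_{TV}$ and $\norm{\nu}_{TV}$ rather than fixing a ball of radius $R$, and it uses the slightly sharper bound $|\cB_y\mu(I)| \leq \sup_x\lambda(x)\,\norm{\mu}_{TV}$ that you also note parenthetically.
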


\begin{proof}
	Fix $y \in O$ and $\mu, \nu \in \cM(I)$. Then, recalling (\ref{eq:totalvariationequiv}) and the result of Lemma \ref{lemma:Bycontinuous}, we have that
	\begin{align*}
	&\quad \, \norm{F_y(\nu) - F_y(\mu)}_{TV} = \norm{\cB_y \nu - \nu \cB_y \nu(I) - \cB_y \mu + \mu \cB_y \mu(I)}_{TV} \\
	&\leq \norm{\cB_y(\nu-\mu)}_{TV} + \norm{\cB_y \mu(I) [\nu-\mu]}_{TV} + \norm{\nu[\cB_y\nu(I) - \cB_y\mu(I)]}_{TV} \\
	&\leq 4\sup_{x \in I}\lambda(x) \norm{\nu-\mu}_{TV} + |\cB_y \mu(I)| \norm{\nu - \mu}_{TV} + \norm{\nu}_{TV} |\cB_y\nu(I) - \cB_y\mu(I)| \\
	&\leq (4 + \norm{\mu}_{TV}) \sup_{x \in I}\lambda(x) \norm{\nu-\mu}_{TV} + \norm{\nu}_{TV} \norm{\cB_y(\nu-\mu)}_{TV} \\
	&\leq (4 + \norm{\mu}_{TV} + 4\norm{\nu}_{TV}) \sup_{x \in I}\lambda(x) \norm{\nu-\mu}_{TV}
	\end{align*}
	whence the result. Notice that the term $|\cB_y \mu(I)| = \bigl|\int_I \lambda\bigl(x, h^{-1}(y)^c\bigr) \, \mu(\dd x)\bigr|$ is easily majorized by $\norm{\mu}_{TV} \sup_{x \in I} \lambda(x)$.
\end{proof}

%\begin{rem}\label{rem:Fylipschitz}
%	It is worth noting that from the previous computations we deduce that the field $F_y$ is Lipschitz continuous on $\cP(I)$.
%\end{rem}

\begin{theorem}
	Under Assumption \ref{hp:lambda}, for each fixed $y \in O$ the ODE (\ref{eq:filterODE}) admits a unique global solution $z \in \dC^1\bigl([0, +\infty); \Delta_y\bigr)$.
\end{theorem}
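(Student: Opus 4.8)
The plan is to produce, via the Cauchy--Lipschitz theorem in the Banach space $\bigl(\cM(I), \norm{\cdot}_{TV}\bigr)$, a unique maximal solution of \eqref{eq:filterODE}, and then to show that it stays in $\Delta_y$, which will force it to be global. By Proposition~\ref{prop:Fylipschitz} the vector field $F_y$ is locally Lipschitz on $\cM(I)$, so the classical Picard--Lindel\"of theorem gives, for each $\rho \in \Delta_y$, a unique maximal solution $z \in \dC^1\bigl([0, T_{\max}); \cM(I)\bigr)$; the $\dC^1$ regularity is automatic, since $z$ is continuous and $z_t' = F_y(z_t)$ with $F_y$ continuous. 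Because $\Delta_y$ is bounded in $\cM(I)$ — every probability measure has total variation norm $1$ — once we know $z_t \in \Delta_y$ for all $t \in [0, T_{\max})$, the blow-up alternative immediately yields $T_{\max} = +\infty$, and uniqueness of the global $\dC^1\bigl([0,+\infty);\Delta_y\bigr)$ solution follows from local uniqueness.

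The heart of the proof is the invariance of $\Delta_y$, which I would establish by linearising \eqref{eq:filterODE} along the solution. Setting $c(t) \coloneqq \cB_y z_t(I)$ — continuous and locally bounded, since $\abs{\cB_y z_t(I)} \le \norm{z_t}_{TV}\,\sup_{x \in I}\lambda(x)$ — and $\zeta_t \coloneqq \exp\bigl(\int_0^t c(s)\,\dd s\bigr)\, z_t$, one checks from the definition \eqref{eq:vectorfield} of $F_y$ that $\zeta$ solves the \emph{linear autonomous} equation $\zeta_t' = \cB_y\zeta_t$, $\zeta_0 = \rho$, so that $\zeta_t = e^{t\cB_y}\rho$ (the bounded operator $\cB_y$ generating a uniformly continuous semigroup by Lemma~\ref{lemma:Bycontinuous}). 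Writing $\cB_y = G_y - D$, with $D\nu(\dd z) \coloneqq \lambda(z)\,\nu(\dd z)$ multiplication by the bounded nonnegative function $\lambda$ and $G_y\nu(\dd z) \coloneqq \ind_{h^{-1}(y)}(z)\int_I \lambda(x,\dd z)\,\nu(\dd x)$ a bounded positive operator, the variation-of-constants identity $\zeta_t = e^{-tD}\rho + \int_0^t e^{-(t-s)D}\, G_y\,\zeta_s\,\dd s$ together with a Picard iteration shows that $e^{t\cB_y}$ preserves the cone of nonnegative measures (since $e^{-tD}$ is multiplication by the positive function $e^{-t\lambda(\cdot)}$ and $G_y$ is positive) and leaves invariant the closed linear subspace of measures concentrated on $h^{-1}(y)$ (which is plainly $\cB_y$-invariant). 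Hence $\zeta_t$, and therefore $z_t = \exp\bigl(-\int_0^t c(s)\,\dd s\bigr)\zeta_t$, is a nonnegative measure concentrated on $h^{-1}(y)$ for every $t \in [0, T_{\max})$. Mass conservation is then the scalar fact $F_y(\nu)(I) = \cB_y\nu(I)\,\bigl(1 - \nu(I)\bigr)$: the function $g(t) \coloneqq z_t(I)$ solves $g'(t) = c(t)\bigl(1 - g(t)\bigr)$ with $g(0) = 1$, so $g \equiv 1$; consequently $z_t \in \cP(I)$ and $z_t\bigl(h^{-1}(y)\bigr) = 1$, i.e. $z_t \in \Delta_y$, which completes the proof.

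I expect the invariance step, and within it the positivity of $z_t$, to be the main obstacle. The nonlinear term $\nu \mapsto \nu\,\cB_y\nu(I)$ is harmless for mass conservation and for the concentration property — it only contributes a scalar multiplicative factor, removed by the substitution $\zeta_t = e^{\int_0^t c}\, z_t$ — but one must still verify that the uniformly continuous semigroup $e^{t\cB_y}$ is positivity-preserving; this can be done via the Duhamel/Picard argument above, or by appealing to the standard theory of Kolmogorov-type generators (a bounded positive perturbation of multiplication by a nonpositive function). As an alternative to the semigroup route, one could argue directly that the restriction of $z_t$ to $h^{-1}(y)^c$ obeys a closed linear integral equation with zero initial datum, and vanishes identically by a Gr\"onwall estimate on its total variation, handling positivity on $h^{-1}(y)$ separately through the Jordan decomposition $z_t = z_t^+ - z_t^-$.
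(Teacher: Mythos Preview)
Your proof is correct, and it takes a genuinely different route from the paper. The paper appeals to Martin's invariance theorem for ODEs in Banach spaces (\cite{martin:differentialeq}, Th.~4), verifying a Nagumo--type subtangential condition by checking directly that $\mu + \epsilon F_y(\mu) \in \Delta_y$ for all $\mu \in \Delta_y$ and small enough $\epsilon > 0$, together with continuity, local boundedness, and a one-sided Lipschitz (dissipativity) bound. Your argument instead linearises via the substitution $\zeta_t = \exp\bigl(\int_0^t \cB_y z_s(I)\,\dd s\bigr) z_t$, recognises $\zeta_t = e^{t\cB_y}\rho$, and reads off positivity and support from the Duhamel decomposition $\cB_y = G_y - D$; mass conservation then falls out of the scalar ODE for $z_t(I)$. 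Both approaches rely on Proposition~\ref{prop:Fylipschitz} and Lemma~\ref{lemma:Bycontinuous}. The paper's route is shorter once Martin's theorem is taken as a black box, while yours is more self-contained and, notably, yields the explicit representation $z_t = e^{t\cB_y}\rho \big/ e^{t\cB_y}\rho(I)$ --- precisely the unnormalised/normalised filter (Zakai--type) relationship familiar from filtering theory, which the paper does not make explicit.
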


\begin{proof}
	Fix $y \in O$.
	The claim follows from \citep[Th. 4]{martin:differentialeq}. To apply it we have to verify the following assumptions (we point out in square brackets the reference to the corresponding hypotheses of the cited work. We invite the interested reader to consult it for further details).
	\begin{enumerate}
		\item $F_y$ is continuous from $\Delta_y$ into $\cM(I)$ [Condition C1].
		\item $\displaystyle \lim_{\epsilon \to 0^+} \frac{1}{\epsilon} \inf_{\nu \in \Delta_y}\norm{\mu + \epsilon F_y(\mu) - \nu}_{TV} = 0$ for all $\mu \in \Delta_y$ [Condition C2].
		\item For all $K > 0$ there exists $C_K > 0$ such that for all $\mu \in \Delta_y$ with $\norm{\mu}_{TV} \leq K$ it holds that $\norm{F_y(\mu)}_{TV} \leq C_K$ [(i) of Theorem 4].
		\item $\langle \mu - \nu, F_y(\mu) - F_y(\nu) \rangle_+ \leq C \norm{\mu - \nu}_{TV}^2$ for all $\mu, \nu \in \Delta_y$ and some $C > 0$ [(3.11)]
	\end{enumerate}
	where for all $\mu, \nu \in \cM(I)$ we define $\displaystyle \langle \mu, \nu \rangle_+ \coloneqq \suptwo{\Phi \in \cM(I)^\star}{\Phi \mu = \norm{\mu}_{TV}^2} \Phi \nu$. The set $\cM(I)^\star$ is the topological dual space of $\cM(I)$.
	
	\noindent \textbf{Claim 1.} In Proposition \ref{prop:Fylipschitz} we proved that the vector field $F_y$ is Lipschitz continuous on $\cP(I)$, hence we easily deduce its continuity on $\Delta_y$.
	
	\noindent \textbf{Claim 2.} Fix $\mu \in \Delta_y$. To get the claim, it suffices to prove that for $\epsilon > 0$ small enough $\mu + \epsilon F_y(\mu) \in \Delta_y$. 
	
	We prove, first that $\mu + \epsilon F_y(\mu) \in \cM_+(I)$ for $\epsilon > 0$ small enough. For all fixed $A \in \cI$ we have that
	\begin{align*}
		\bigl[\mu + \epsilon F_y(\mu)\bigr](A) 
		&= \mu(A) + \epsilon F_y(\mu; \, A) = \mu(A) + \epsilon \bigl[\cB_y \mu(A) - \mu(A) \cB_y \mu(I) \bigr] \\
		&= \mu(A) + \epsilon \biggl[ \int_I \lambda(x, A \cap h^{-1}(y)) \, \mu(\dd x) - \int_A \lambda(z) \, \mu(\dd z) \biggr. 
		\\ 
		&\qquad \biggl. {} + \mu(A) \int_I \lambda(x, h^{-1}(y)^c) \, \mu(\dd x) \biggr]
	\end{align*}
	Recalling that Assumption \ref{hp:lambda} is in force, we have the obvious estimate 
	\begin{equation*}
		\int_A \lambda(z) \, \mu(\dd z) \leq \sup_{x \in A} \lambda(x) \mu(A) \leq \sup_{x \in I} \lambda(x) \mu(A).
	\end{equation*}
	Hence, taking $\epsilon < [\sup\limits_{x \in I} \lambda(x)]^{-1}$ we get
	\begin{align*}
		\bigl[\mu + \epsilon F_y(\mu)\bigr](A)
		&> \epsilon \biggl[\int_I \lambda(x, A \cap h^{-1}(y)) \, \mu(\dd x) \biggr. \\
		&\qquad \biggl. {} + \mu(A) \int_I \lambda(x, h^{-1}(y)^c) \, \mu(\dd x) \biggr] \geq 0.
	\end{align*}
	
	Now it remains to prove that $\mu + \epsilon F_y(\mu)$ is a probability measure and that is concentrated on $h^{-1}(y)$.
	This can be easily shown, since it is immediately seen that $\bigl[\mu + \epsilon F_y(\mu)\bigr](I) = 1$ and we have that
	\begin{align*}
	\bigl[\mu + \epsilon F_y(\mu)\bigr]\bigl(h^{-1}(y)\bigr) 
	&= \mu\bigl(h^{-1}(y)\bigr) + \epsilon F_y\bigl(\mu; \, h^{-1}(y)\bigr) 
	\\
	&= 1 + \epsilon \biggl[ \int_{h^{-1}(y)} \int_I \lambda(x, \dd z) \, \mu(\dd x) - \int_{h^{-1}(y)} \lambda(z) \, \mu(\dd z) \biggr. 
	\\ 
	&\qquad \biggl. {} - \int_{h^{-1}(y)} \int_I \lambda(x, \dd z) \, \mu(\dd x) + \int_I \lambda(z) \, \mu(\dd z) \biggr] = 1
	\end{align*}
	thanks to the equality $\int_{h^{-1}(y)} \lambda(z) \, \mu(\dd z) = \int_I \lambda(z) \, \mu(\dd z)$, implied by the fact that $\mu \in \Delta_y$.
	
	\noindent \textbf{Claim 3.} Fix $\mu \in \Delta_y$. The claim is easily proved thanks to the following estimate, holding for all $A \in \cI$.
	\begin{align*}
		\abs{F_y(\mu; \, A)} 
		&= \abs{\cB_y \mu(A) - \mu(A) \cB_y(I)} \\
		&= \biggl| \int_I \lambda(x, A \cap h^{-1}(y)) \, \mu(\dd x) - \int_A \lambda(z) \, \mu(\dd z) \biggr. \\ 
		&\qquad \biggl. {} + \mu(A) \int_I \lambda(x, h^{-1}(y)^c) \, \mu(\dd x) \biggr| \leq 3 \sup_{x \in I} \lambda(x).
	\end{align*}
	From this inequality, it follows that $\norm{F_y(\mu)}_{TV} \leq 6 \sup_{x \in I} \lambda(x)$, whence the result.
	
	\noindent \textbf{Claim 4.} Fix $\mu, \nu \in \Delta_y$ and take $\Phi \in \cM(I)^\star$ such that $\Phi(\mu - \nu) = \norm{\mu - \nu}_{TV}^2 = \norm{\Phi}_\star^2$, where $\norm{\cdot}_\star$ denotes the norm in the dual space $\cM(I)^\star$. Thanks to Proposition \ref{prop:Fylipschitz} we have that
	\begin{equation*}
		\Phi\bigl(F_y(\mu) - F_y(\nu)\bigr) \leq \norm{\Phi}_\star \, \norm{F_y(\mu) - F_y(\nu)}_{TV} \leq 9 \sup_{x \in I} \lambda(x) \norm{\mu-\nu}_{TV}^2.
	\end{equation*} 
	Since this estimate holds for all required $\Phi$, we get the result taking the supremum.
\end{proof}

\end{document}